\title[DNR and WWKL]{Comparing DNR and WWKL}
    \author[Ambos-Spies]{Klaus Ambos-Spies}
    \address{Mathematisches Institut\\
      Universit\"at Heidelberg\\
      D-69120 Heidelberg, GERMANY}
    \email{ambos@math.uni-heidelberg.de}
    \author[Kjos-Hanssen]{Bj\o rn Kjos-Hanssen}
    \address{Department of Mathematics\\
      University of Connecticut\\
      Storrs, CT 06269-3009, USA}
    \email{bjorn@math.uconn.edu}
    \author[Lempp]{Steffen Lempp}
    \address{Department of Mathematics\\
      University of Wisconsin\\
      Madison, WI 53706-1388, USA}
    \email{lempp@math.wisc.edu}
    \author[Slaman]{Theodore A. Slaman}
    \address{Department of Mathematics\\
      The University of California\\
      Berkeley, CA 94720-3840, USA}
    \email{slaman@math.berkeley.edu}
    \thanks{The second author was supported by a Marie Curie Fellowship of the
    European Community Programme ``Improving Human Potential'' under
    contract number HPMF-CT-2002-01888. The third author was partially
    supported by NSF grant DMS-0140120 and a Mercator Guest
    Professorship of the Deutsche Forschungsgemeinschaft. The fourth
    author was partially supported by NSF grant DMS-9988644 and the
    Humboldt Foundation. The authors would like to thank the anonymous referee for many useful comments.}
\subjclass{Primary 03D28, 03F35;\\Secondary 03F60}
\keywords{reverse mathematics, diagonally non-recursive functions,
Turing degrees}
\theoremstyle{definition}
\theoremstyle{remark}
\newtheorem{thm}{Theorem}[section]
\newtheorem{lem}[thm]{Lemma}
\theoremstyle{definition}
\newtheorem{df}[thm]{Definition}
\theoremstyle{remark}
\newcommand{\la}{\langle}
\newcommand{\ra}{\rangle}
\renewcommand{\iff}{\leftrightarrow}
\newcommand{\mc}{\mathcal}
\newcommand{\union}{\cup}
\newcommand{\inter}{\cap}
\newcommand{\Union}{\bigcup}
\newcommand{\Inter}{\bigcap}
\renewcommand{\to}{\rightarrow}
\renewcommand{\and}{\,\&\,}
\begin{document}
\begin{abstract}
In Reverse Mathematics, the axiom system DNR, asserting the
existence of diagonally non-recursive functions, is strictly
weaker than WWKL$_0$ (weak weak K\"onig's Lemma).
\end{abstract}
\maketitle

\tableofcontents

\section{Introduction}\label{firstsec}

Reverse mathematics is a branch of proof theory which involves proving the
equivalence of mathematical theorems with certain collections of axioms over
a weaker base theory. In the form adopted by Harvey Friedman (see, e.g.,
\cite{Friedman:75}) and Stephen G. Simpson, expounded in the monograph
\cite{Simpson:99} and numerous papers, it involves formulating ``countable
mathematics'' in second-order arithmetic and proving mathematical theorems
$\varphi$ equivalent to suitable axioms (or axiom systems)~$\psi$ over a
weaker base axiom system~$T$, usually RCA$_0$. (Here, the subscript~$0$
denotes restricted induction, i.~e., RCA$_0$ does not include the full second
order induction scheme.) Since the model that we shall construct in order to
prove our main theorem does satisfy this scheme, subtleties of restricted
induction will have no bearing on the arguments in this paper.

Let~$T_1<T_2$ express that the theory~$T_2$ proves all the axioms of the
theory~$T_1$, but not conversely. Simpson points to the chain
\[\text{RCA$_0<$WKL$_0<$ACA$_0<$ATR$_0<\Pi^1_1$CA$_0$}\] as
consisting of the axiom systems that appear most frequently as $T\union\psi$.

In~\cite{Yu.Simpson:90}, Simpson and X. Yu introduced an axiom system
WWKL$_0$ and showed it to be strictly intermediate between RCA$_0$ and
WKL$_0$ as well as equivalent to some statements on Lebesgue and Borel
measure. WWKL$_0$ was further studied by Giusto and Simpson
\cite{Giusto.Simpson:00}; and by Brown, Giusto and Simpson
\cite{Brown.Giusto.Simpson:02}. Giusto and Simpson found that a certain
version of the Tietze Extension Theorem was provable in WKL$_0$ and implied
the DNR axiom. They pointed out that DNR is intermediate between RCA$_0$ and
WWKL$_0$, but left open the question whether DNR coincides with WWKL$_0$,
i.~e., has the same theorems as WWKL$_0$. Simpson conjectured that
DNR$<$WWKL$_0$. In the current paper, we confirm Simpson's conjecture.

\begin{df}\label{firstdef}\label{Delta}
If $\sigma,\tau\in\omega^{<\omega}$ then $\sigma$ is called a
\emph{substring} of $\tau$, $\sigma\subseteq\tau$, if for all~$x$ in the
domain of~$\sigma$, $\sigma(x)=\tau(x)$. The length of a string~$\sigma$ is
denoted by~$|\sigma|$. A string $\la a_1,\ldots,a_n\ra\in\omega^n$ is denoted
$(a_1,\ldots,a_n)$ when we find this more natural. The \emph{concatenation}
of $\la a_1,\ldots,a_n\ra$ by $\la a_n\ra$ on the right is denoted
$((a_1,\ldots,a_n),a_{n+1})$ or $\la a_1,\ldots,a_n\ra*\la a_{n+1}\ra=\la
a_1,\ldots,a_n\ra*a_{n+1}$. If $G\in\omega^\omega$ then $\sigma$ is a
substring of~$G$ if for all~$x$ in the domain of~$\sigma$, $\sigma(x)=G(x)$.

Given $G:\omega\to\omega$ and $n\in\omega$, we define the $n$th
column of $G$ to be the function $G_n:\omega\to\omega$ such that
for all $k\in\omega$, $G_n(k)=G(2^n(2k+1))$. On the other hand, if
for each $n\in\omega$ we are given a function
$G_n:\omega\to\omega$, then we let $\oplus_{n\in\omega}G_n$ denote
the function $G$ such that $G(2^n(2k+1))=G_n(k)$ for all
$n,k\in\omega$.

Let $\Phi_n$, $n\in\omega$, be a standard list of the Turing functionals. So
if $A$ is recursive in~$B$ then for some $n$, $A=\Phi_n^B$. For convenience,
if $\Phi$ is a Turing functional and for all $B$ and $x$, the computation of
$\Phi^B(x)$ is independent of~$x$, we sometimes write $\Phi^B$ instead of
$\Phi^B(x)$. Let $\Phi_{n,t}$ be the modification of $\Phi_n$ which goes into
an infinite loop after~$t$ computation steps if the computation has not ended
after~$t$ steps. We abbreviate $\Phi_n^\emptyset$ by $\Phi_n$. If the
computation $\Phi_e(x)$ terminates we write $\Phi_e(x)\downarrow$, otherwise
$\Phi_e(x)\uparrow$.

The axiom system DNR corresponds to a class of functions in
$\omega^\omega$ denoted by \textsf{DNR}: Given functions
$H,G:\omega\to\omega$, we say $H$ is $\textsf{DNR}^G$
(\emph{diagonally nonrecursive in~$G$}) if for all $x\in\omega$,
$H(x)\ne\Phi^G_x(x)$ or $\Phi^G_x(x)\uparrow$. Given
$h:\omega\to\omega$, we say $H$ is $h$-$\textsf{DNR}^G$ if in
addition for all~$n$, $H(n)<h(n)$. (This necessitates that
$h(n)>0$ for all~$n$.) We say $H$ is \textsf{DNR} if $H$ is
$\textsf{DNR}^\emptyset$. If $H$ is $\textsf{DNR}^G$ and $\sigma$
is a substring of~$H$ then $\sigma$ is called a $\textsf{DNR}^G$
\emph{string}. In this article $G:\omega\to\omega$ will be called
\emph{relatively} \textsf{DNR} if there are no $x,y$ such that
$G_{2y}(x)=\Phi^{G_0\oplus\cdots\oplus G_{2y-1}}_x(x)$.
\end{df}

\begin{df}\label{svensk}
Let $A$ be a real, i.~e., a subset of the nonnegative
integers~$\omega$. A \emph{Martin-L\"of test $U$ relative to~$A$}
is a sequence of open sets $U_n\subseteq 2^\omega$, $n\in\omega$
uniformly r.e. in~$A$ such that $\mu(U_n) \le 2^{-n}$, where $\mu$
denotes the standard measure on $2^\omega$. Then $\Inter_n U_n$ is
called a \emph{Martin-L\"of null set relative to~$A$}. If
$A=\emptyset$ then we speak simply of a \emph{Martin-L\"of test}
and a \emph{Martin-L\"of null set}. A set $R\subseteq\omega$ is
\emph{Martin-L\"of random} if for each Martin-L\"of test~$U$,
there is an~$n$ such that $R\not\in U_n$.
\end{df}

For an introduction to Martin-L\"of randomness and related
concepts the reader may consult \cite{Ambos-Spies.Kucera:00}.

The only fact we need about the axiom systems is the following:

\begin{lem}\label{homer}
Let $\mc I$ be a Turing ideal, i.~e., a set of subsets of~$\omega$ whose
Turing degrees form an ideal within the upper semilattice of all Turing
degrees. Let $N(\mc I)$ be the $\omega$-model of RCA$_0$ with $\mc I$ as the
interpretation of the power set symbol.

\noindent (1) $N(\mc I)\models\text{DNR}$ if and only if for each
$G\in\mc I$, there is $H\in\mc I$ such that $H$ is
$\textsf{DNR}^G$.

\noindent (2) $N(\mc I)\models\text{WWKL}_0$ if and only if for
each $G\in\mc I$, there is $H\in\mc I$ such that $H$ is
Martin-L\"of random relative to~$G$.
\end{lem}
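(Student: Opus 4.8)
The plan is to obtain both equivalences from a single observation about $\omega$-models. Since the first-order part of $N(\mc I)$ is the standard model of arithmetic, every arithmetical formula with set parameters from $\mc I$ has the same truth value in $N(\mc I)$ as in the real world; hence, for an axiom of the form ``$\forall X\,\exists Y\,\theta(X,Y)$'' with $\theta$ arithmetical, $N(\mc I)$ satisfies it if and only if the corresponding statement restricted to members of~$\mc I$ holds. (For completeness I would also recall the routine fact that $N(\mc I)\models\text{RCA}_0$: closure of $\mc I$ under Turing reducibility and join gives $\Delta^0_1$-comprehension, and $\Sigma^0_1$-induction is automatic because the integers of $N(\mc I)$ are standard.)

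For part~(1) I would simply unwind the definitions. The DNR axiom asserts that for every set~$X$ there is a function~$H$ with $H(x)\ne\Phi^X_x(x)$ or $\Phi^X_x(x)\uparrow$ for all~$x$; its matrix is arithmetical in $X\oplus H$, because ``$\Phi^X_x(x)\downarrow=v$'' is $\Sigma^0_1(X)$, and a function belongs to $N(\mc I)$ exactly when its graph lies in~$\mc I$. By the observation above, $N(\mc I)\models\text{DNR}$ iff for every $G\in\mc I$ there is $H\in\mc I$ that is $\textsf{DNR}^G$, and essentially nothing further is needed.

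For part~(2) the cleanest route is to quote the theorem of Yu and Simpson that, over RCA$_0$, WWKL$_0$ is equivalent to ``for every set~$X$ there is a set Martin-L\"of random relative to~$X$'', and then apply the $\omega$-model observation to this statement. For a self-contained argument I would argue directly. First assume $N(\mc I)\models\text{WWKL}_0$ and fix $G\in\mc I$: taking a universal Martin-L\"of test $\la U_n\ra$ relative to~$G$ and an~$n_0$ with $\mu(U_{n_0})\le 1/2$, the complement $2^\omega\setminus U_{n_0}$ equals $[T]$ for a $G$-recursive tree $T\subseteq 2^{<\omega}$ with $|T\cap 2^n|\ge 2^{n-1}$ for all~$n$, and $T\in\mc I$; WWKL$_0$ inside $N(\mc I)$ then provides a path $H\in\mc I$ through~$T$, and $H\notin U_{n_0}$ shows $H$ is Martin-L\"of random relative to~$G$. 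Conversely assume each $G\in\mc I$ has a member of~$\mc I$ that is Martin-L\"of random relative to~$G$; given a tree $T\in\mc I$ of positive measure, set $G=T$, take $H\in\mc I$ random relative to~$G$, and use Ku\v{c}era's theorem (relativized to~$G$) to find~$m$ for which the tail $k\mapsto H(m+k)$ is a path through~$T$. This tail is recursive in~$H$, hence lies in~$\mc I$, so $N(\mc I)\models\text{WWKL}_0$.

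The steps that demand care, though none is deep, are the faithful formalization of DNR and WWKL$_0$ in second-order arithmetic and the check that the matrices involved are genuinely arithmetical, so that $\omega$-model absoluteness applies. The single substantial ingredient is Ku\v{c}era's theorem that a $\Pi^0_1$ class of positive measure contains a tail of every Martin-L\"of random real --- equivalently, the nontrivial direction of the Yu--Simpson characterization of WWKL$_0$ --- whose proof rests on a Borel--Cantelli-style summability estimate; this is where the real content of part~(2) sits, and I would cite it rather than reproduce it.
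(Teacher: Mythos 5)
Your proposal is correct and follows essentially the same route as the paper: part (1) is immediate from the definition (via $\omega$-model absoluteness), and part (2) uses exactly the paper's two ingredients, namely the universal Martin-L\"of test (whose component complements are positive-measure $\Pi^0_1$ classes of randoms) for the direction from WWKL$_0$ to relative randoms, and Ku\v{c}era's theorem (tails/finite modifications of randoms entering positive-measure $\Pi^0_1$ classes) for the converse. The only difference is that you spell out the details that the paper disposes of by citing Martin-L\"of and Ku\v{c}era.
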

\begin{proof}
For definitions of the DNR and WWKL$_0$ axioms, see
\cite{Giusto.Simpson:00}. The equivalence (1) is immediate from
the definition of the DNR axiom.

The ``if'' part of (2) follows from the relativization of a result
of Martin-L\"of~\cite{MartinLof:70}: there is a Martin-L\"of test
$(U_n)_{n\in\omega}$ (as in Definition \ref{svensk}) such that the
complement of any~$U_n$ is a $\Pi^0_1$ class of positive measure
containing only Martin-L\"of random sets. Namely,
let~$(U_n)_{n\in\omega}$ be a universal Martin-L\"of test.

The ``only if'' part of (2) follows from the relativization of a
result of Ku\v{c}era~\cite{Kucera:84}: for every Martin-L\"of
random set~$R$, every $\Pi^0_1$ class of positive measure contains
some finite modification of~$R$.
\end{proof}

We will prove the following theorem by elaborating on the proof of
Proposition 3 of~\cite{Jockusch:89}. The proof given there is attributed to
Kurtz; the result follows also from a theorem of Ku\v{c}era~\cite{Kucera:84}.

\begin{thm}\label{kurt}
There is a recursive function~$h$ such that for each Martin-L\"of random real
$R$, there is an $h$-\textsf{DNR} function~$f$ recursive in~$R$.
\end{thm}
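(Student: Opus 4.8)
The plan is to compute the values of $f$ directly from the bits of $R$, block by block, and to gather the reals on which this recipe fails to yield a $\textsf{DNR}$ function into a single Martin-L\"of null set; since $R$ avoids that set, the recipe succeeds (up to a finite patch) on $R$.

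First I would fix the function $h$ together with an effective partition of $\omega$ into consecutive finite intervals $I_0,I_1,\ldots$ with $|I_n|=n+2$, and set $h(n)=2^{n+2}$. For $X\in 2^\omega$ let $f^X(n)\in\{0,\ldots,2^{n+2}-1\}$ be the integer whose length-$(n+2)$ binary representation is the block $X\restrict I_n$. Then $f^X(n)<h(n)$ for every $n$, and $f^X$ is recursive in $X$, uniformly in $X$. Note that for each fixed value $v<h(n)$ the set $\{X:f^X(n)=v\}$ is clopen of measure exactly $2^{-(n+2)}$; this calibration of $h$ against the block length is what makes the measure bookkeeping work.

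Next I would build the test. For each $n$ let $V_n$ be the set of those $X$ such that $\Phi_n(n)\downarrow$ with some value $v<h(n)$ and $f^X(n)=v$. If $\Phi_n(n)\uparrow$, or if $\Phi_n(n)\downarrow$ with a value $\ge h(n)$, then $V_n=\emptyset$ (in the latter case because $f^X(n)<h(n)$ already forces $f^X(n)\ne\Phi_n(n)$); otherwise $V_n=\{X:f^X(n)=\Phi_n(n)\}$, which is clopen of measure $2^{-(n+2)}$. In all cases $V_n$ is enumerated uniformly once $\Phi_n(n)$ halts, so $V_n$ is an r.e. open set. Put $U_m=\bigcup_{n\ge m}V_n$. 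Then $(U_m)_{m\in\omega}$ is uniformly r.e. and $\mu(U_m)\le\sum_{n\ge m}2^{-(n+2)}\le 2^{-m}$, so it is a Martin-L\"of test.

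Finally, suppose $R$ is Martin-L\"of random and fix $m$ with $R\notin U_m$. Then $R\notin V_n$ for all $n\ge m$, which unwinds to: for every $n\ge m$, either $\Phi_n(n)\uparrow$ or $f^R(n)\ne\Phi_n(n)$; and $f^R(n)<h(n)$ always. Thus $f^R$ satisfies the $h$-$\textsf{DNR}$ requirement at every index $n\ge m$. To repair the finitely many indices below $m$, for each $n<m$ choose any $g(n)<h(n)$ with $g(n)\ne\Phi_n(n)$ whenever $\Phi_n(n)\downarrow$ (possible since $h(n)\ge 2$), and let $f$ agree with $g$ on $\{0,\ldots,m-1\}$ and with $f^R$ elsewhere. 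Then $f$ is $h$-$\textsf{DNR}$, and it is recursive in $R$ because it differs from $f^R$ (which is recursive in $R$) on only a finite set. The main point to get right is that the ``failure at index $n$'' event is an open set of measure at most $2^{-(n+2)}$ \emph{uniformly} in $n$, which is exactly what the choice $h(n)=2^{|I_n|}$ secures; the non-uniformity of the initial-segment patch in $R$ is harmless, since a finite object is recursive in every oracle. (The same measure-theoretic content can be extracted from Ku\v{c}era's theorem applied to the $\Pi^0_1$ class $2^\omega\setminus U_0$.)
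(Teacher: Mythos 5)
Your proposal is correct and is essentially the paper's own argument: the paper also reads the candidate values of the function directly off the bits of $R$ (there via initial segments, $f^*_A(x)=A\restrict x$ viewed as a number $<2^x$ with $h(x)=2^x$, rather than your disjoint blocks with $h(n)=2^{n+2}$), collects the coincidence events with $\Phi_x(x)$ into a Martin-L\"of test, and finishes with the same finite modification recursive in $R$. The only differences are cosmetic choices of block structure and calibration of $h$.
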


\begin{proof}
Given any real $A\subseteq\omega$ and $x\in\omega$, let $f^*_A(x)$ be equal
to~$A$ restricted to~$x$, considered as a number $<2^x$. Let $h(x)=2^x$ and
let
$$
U_n=\{A:\exists x>n.f^*_A(x) = \Phi_x(x)\}.
$$
Then the sets~$U_n$ define a Martin-L\"of test~$U$. Hence no Martin-L\"of
random set is in all of the~$U_n$. So $f^*_R(x)=\Phi_x(x)$ for at most
finitely many~$x$. Let~$f$ be a finite modification of~$f^*$ such that~$f$ is
$h$-\textsf{DNR}. Since $R$ computes $f^*$, $R$ computes~$f$.
\end{proof}

The following theorem is proved in Section~\ref{proof}.

\begin{thm}\label{one} For any recursive function $h:\omega\to\omega$,
there exists $G:\omega\to\omega$ which is relatively \textsf{DNR}, such that
for each Turing functional~$\Phi$ and each $i\in\omega$,
$\Phi^{G_0\oplus\cdots\oplus G_i}$ is not an $h$-\textsf{DNR} function.
\end{thm}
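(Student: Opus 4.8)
\section*{Proof proposal}

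The plan is to construct $G$ by a forcing (finite-extension) argument. A condition is a finite tuple $p=(p_0,\dots,p_k)$ of finite strings, thought of as initial segments of the columns $G_0,\dots,G_k$, ordered by componentwise extension and the adjunction of new columns; I would call $p$ \emph{acceptable} if for every even $2y\le k$ and every $x<|p_{2y}|$ the computation $\Phi^{p_0\oplus\cdots\oplus p_{2y-1}}_x(x)$, run relative to the finite partial oracle $p_0\oplus\cdots\oplus p_{2y-1}$ (a query outside the defined positions counting as a divergence), does not converge to $p_{2y}(x)$. Any acceptable condition can be extended: the odd columns (and any fresh column) may be lengthened arbitrarily, and a new entry of an even column $2y$ can be chosen after first lengthening columns $0,\dots,2y-1$ enough for the relevant computations to settle, since at most one value is ever forbidden at a given position.

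One then meets countably many dense sets with a single filter. The totality sets --- for each $j$ and $N$, the conditions with at least $j{+}1$ columns and $j$th column of length $\ge N$ --- guarantee that $G=\oplus_nG_n$ is total with all columns present. For each $(x,y)$ a dense set $D_{x,y}$ guarantees $G_{2y}(x)\ne\Phi^{G_0\oplus\cdots\oplus G_{2y-1}}_x(x)$: starting from any condition, lengthen columns $0,\dots,2y-1$ until either that computation converges, in which case column $2y$ is extended through position $x$ avoiding the value, consistently with acceptability, or it is forced to diverge. Meeting all the $D_{x,y}$ makes $G$ relatively \textsf{DNR}. The remaining, and crucial, dense sets are the $E_{\Phi,i}$, forcing that $\Phi^{G_0\oplus\cdots\oplus G_i}$ is not $h$-\textsf{DNR}.

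For $E_{\Phi,i}$ I would argue by the recursion theorem, as in Kurtz's proof. Given a condition $p$, the parametrized recursion theorem yields a recursive $d$ with $e=d(\langle p,i,\Phi\rangle)$ such that $\Phi_e(e)$ searches for the first finite tuple $\tau$ extending $p$ only in its odd columns and fresh columns (its even columns below $i$ staying as in $p$) with $\Phi^{\tau_0\oplus\cdots\oplus\tau_i}_e(e)$ halting inside $\tau$, and outputs that value. Such $\tau$ are recursively enumerable, and from such a $\tau$ one reads off an \emph{acceptable} $q$ extending $p$ --- its even columns below $i$ being those of $p$, its odd columns agreeing with $\tau$ wherever the computation queries --- that forces $\Phi^{G_0\oplus\cdots\oplus G_i}(e)$ to equal $\Phi_e(e)$; if this value is $<h(e)$ it is a failure of diagonalization, and otherwise it violates the bound~$h$, so $q\in E_{\Phi,i}$. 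If instead the search never halts, then no such odd-column extension of $p$ makes the computation converge through the positions available in $p$'s even columns, and one enlarges those even columns (using the totality and $D_{x,y}$ machinery) and repeats; a short argument shows that if this never succeeds then $\Phi^{G_0\oplus\cdots\oplus G_i}$ is forced to be non-total, so the requirement is met in the limit in any case.

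The step I expect to be the main obstacle is precisely the reconciliation of effectivity with the relative-\textsf{DNR} property: keeping the search for $\tau$ recursively enumerable forces us to restrict it to moves that do not disturb the already-committed entries of the even columns, and one must check that this restriction is not too severe --- that the divergence case of the search really does yield non-totality of $\Phi^{G_0\oplus\cdots\oplus G_i}$ rather than leaving the requirement unmet. Getting this bookkeeping right, together with verifying that a committed entry of an even column never later collides with its forbidden value (which dictates the order in which the columns are lengthened), is the content of the promised elaboration of Proposition~3 of \cite{Jockusch:89}.
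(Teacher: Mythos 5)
Your forcing skeleton is reasonable, and the recursion-theorem idea for meeting $E_{\Phi,i}$ is the right starting point (it is also where the paper starts), but the step you flag as the ``main obstacle'' is a genuine gap, not bookkeeping. The problem is precisely your divergence case. Because your search for $\tau$ is only allowed to extend odd columns, the search can diverge even when $\Phi^{G_0\oplus\cdots\oplus G_i}(e)$ is \emph{total and bounded by $h$} on every extension of $p$. Take $2y\le i$, take $x\ge|p_{2y}|$, and suppose $\Phi$ on input $e$ simply reads $G_{2y}(x)$ and outputs $G_{2y}(x)\bmod h(e)$. Then the odd-column search can never find a converging $\tau$ (the computation always queries an as-yet-undefined even position), so $\Phi_e(e)\uparrow$; but every way you later extend column $2y$ through $x$ --- and you must, for totality, and one value is forbidden by the DNR constraint --- yields $\Phi^{G_0\oplus\cdots\oplus G_i}(e)\downarrow<h(e)$. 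So the requirement is not met at $e$, and there is no forcing of non-totality. Starting over with a new $p'$ and a new fixed point $e'$ just reproduces the same situation; nothing guarantees the process terminates in the converging case or forces divergence in the limit.

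What is missing is a way to search that is simultaneously r.e.\ \emph{and} compatible with the (co-r.e.) DNR constraints on the even columns. The paper's way around this is to replace ``search for a converging $\tau$'' by ``search for an $\vec n$-good system of trees of converging pseudostrings'' (Definition~\ref{f}), where $\vec n$-good (from Kumabe's fixed-point-free minimal degree construction) quantifies branching room at every level. The combinatorial core is then Lemmas~\ref{kum}, \ref{notsharp}, \ref{g}, \ref{claim}, \ref{induct}: a $(2\vec n{-}1)$-good system can be two-colored into $\vec n$-good pieces, unions of $a$ bad sets cost only a factor $2^a$ in goodness, and the set of non-DNR extensions is too thin to support even a $2$-good tree (Lemma~\ref{nogood}). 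The construction (Definition~\ref{con}) keeps track of a growing goodness vector $\vec n[s]$ and a set $D_s$ of indices whose computations it has committed to diverging, and Lemma~\ref{accept} shows that acceptable extensions always exist. In the example above, the tree-based search \emph{succeeds} (Case~1), because a good tree for $\{\beta:\Phi^\beta(e)=i\}$ exists and the goodness of that tree guarantees a DNR-safe element to step into; your string-based search does not see this. So the issue is not the order in which columns are lengthened, but the replacement of single strings by good trees, which is the substantive content that your proposal is missing. Also, as a minor point, the construction you are describing elaborates on Kumabe's unpublished preprint \cite{Kumabe:XX}, not on Proposition~3 of \cite{Jockusch:89}, which the paper cites only for Theorem~\ref{kurt}.
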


\begin{lem}\label{lett}
Let $h$ be as in Theorem~\ref{kurt} and let $\mc I$ be the Turing ideal
generated by the functions $G_i$ (for $i\in\omega$) of Theorem~\ref{one} for
this~$h$. Then for each element~$H$ of $\mc I$, there is an element~$K$ of
$\mc I$ such that $K$ is \textsf{DNR}$^H$.
\end{lem}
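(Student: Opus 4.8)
The plan is to extract the required witness directly from the relative-\textsf{DNR} property of $G$, using the standard fact that relativized diagonal noncomputability is inherited by weaker oracles via a fixed recursive re-indexing.

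First I would describe $\mc I$ concretely. Each finite join $G_0\oplus\cdots\oplus G_m$ is a finite join of generators and hence lies in $\mc I$, while conversely every member of $\mc I$ is Turing reducible to some finite join of the $G_j$, which in turn is reducible to $G_0\oplus\cdots\oplus G_m$ for $m$ large enough; so $\mc I$ is exactly the downward closure of $\{G_0\oplus\cdots\oplus G_m : m\in\omega\}$. Given $H\in\mc I$, I fix $i$ with $H\le_T G_0\oplus\cdots\oplus G_i$, choose $y\ge 1$ with $2y>i$, and put $B=G_0\oplus\cdots\oplus G_{2y-1}$. Then $H\le_T B$, and by the definition of ``relatively \textsf{DNR}'' there is no $x$ with $G_{2y}(x)=\Phi^B_x(x)$; that is, $G_{2y}$ is \textsf{DNR}$^B$.

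Now comes the transfer step. Fixing $e$ with $H=\Phi^B_e$ and applying the $s$-$m$-$n$ theorem to the functional $Z\mapsto\Phi^{\Phi^Z_e}_x(x)$, I obtain a recursive function $p$ with $\Phi^H_x(x)\simeq\Phi^B_{p(x)}(x)$ for all $x$. I then set $K(x)=G_{2y}(p(x))$. If $\Phi^H_x(x)\downarrow$, then $\Phi^B_{p(x)}(x)\downarrow$ with the same value, and since $G_{2y}$ is \textsf{DNR}$^B$ we get $G_{2y}(p(x))\ne\Phi^B_{p(x)}(x)$, hence $K(x)\ne\Phi^H_x(x)$. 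Thus $K$ is \textsf{DNR}$^H$. Finally $K\le_T G_{2y}$ because $p$ is recursive, and $G_{2y}$ is one of the generators of $\mc I$, so $K\in\mc I$, which is what is required.

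I do not expect a genuine obstacle here: the only delicate point is the index bookkeeping behind $p$, which is routine, and the one structurally essential feature is that the witness $G_{2y}$ is itself a generator of $\mc I$, so that pulling it back along $p$ never leaves the ideal. This is precisely why ``relatively \textsf{DNR}'' is formulated with the even columns diagonalizing against the joins of all earlier columns: it ensures that for every $H\in\mc I$ a \textsf{DNR}$^H$ function can be found below a single generator of~$\mc I$.
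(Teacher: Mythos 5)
Your argument is essentially the paper's: write $H$ as computable from $B=G_0\oplus\cdots\oplus G_{2y-1}$ for a suitable $y$ and use the relative-\textsf{DNR} property of the next even column $G_{2y}$. The paper's two-line proof simply sets $K=G_{2y}$ and leaves the oracle-transfer implicit, whereas you spell out the $s$-$m$-$n$ re-indexing $K(x)=G_{2y}(p(x))$ (together with the downward closure of $\mc I$), which is a correct and slightly more careful rendering of the same idea.
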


\begin{proof}
Since $H$ is in $\mc I$, there exist~$y$ and~$e$ such that
$H=\Phi_e^{G_0\oplus\cdots\oplus G_{2y-1}}$. Let $K=G_{2y}$. Since $G$ is
relatively \textsf{DNR}, the proof is complete.
\end{proof}

\begin{thm}\label{DNRWWKL}
DNR is strictly weaker than WWKL$_0$.
\end{thm}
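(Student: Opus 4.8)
The plan is to establish the one nontrivial direction, namely that DNR does not prove WWKL$_0$; together with the result of Giusto and Simpson~\cite{Giusto.Simpson:00} that WWKL$_0$ proves DNR, this gives DNR $<$ WWKL$_0$. I would do this by producing an $\omega$-model of RCA$_0$ (which automatically satisfies the full second-order induction scheme, as noted in Section~\ref{firstsec}) that satisfies DNR but not WWKL$_0$. By Lemma~\ref{homer}, it is enough to build a Turing ideal $\mc I$ such that (i) every $G\in\mc I$ has some $H\in\mc I$ with $H$ being $\textsf{DNR}^G$, and (ii) some $G\in\mc I$ has no $H\in\mc I$ that is Martin-L\"of random relative to~$G$.

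First I would fix the recursive $h$ given by Theorem~\ref{kurt}, apply Theorem~\ref{one} to this~$h$ to obtain a relatively \textsf{DNR} function $G:\omega\to\omega$, and take $\mc I$ to be the Turing ideal generated by the columns $G_i$ ($i\in\omega$). Then requirement (i) is exactly the content of Lemma~\ref{lett}, so by Lemma~\ref{homer}(1) the model $N(\mc I)$ satisfies DNR.

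For requirement (ii) I would take $G=\emptyset$, which lies in $\mc I$ since a Turing ideal is closed downward under Turing reducibility, and argue that no $H\in\mc I$ is Martin-L\"of random. Assume otherwise, say $H\in\mc I$ is Martin-L\"of random. By Theorem~\ref{kurt} there is then an $h$-\textsf{DNR} function $f$ recursive in~$H$. Now every element of $\mc I$ is recursive in $G_0\oplus\cdots\oplus G_i$ for some $i\in\omega$, since a finite join of columns $G_{i_1}\oplus\cdots\oplus G_{i_k}$ is computable from $G_0\oplus\cdots\oplus G_i$ for $i=\max\{i_1,\dots,i_k\}$; hence $f$ is recursive in $G_0\oplus\cdots\oplus G_i$ for some~$i$, so $f=\Phi^{G_0\oplus\cdots\oplus G_i}$ for some Turing functional~$\Phi$. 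This contradicts Theorem~\ref{one}, which says no such function is $h$-\textsf{DNR}. Thus $N(\mc I)\not\models$ WWKL$_0$ by Lemma~\ref{homer}(2), and the theorem follows.

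All of the difficulty is pushed into the two preceding theorems. Theorem~\ref{kurt} is the Kurtz--Ku\v{c}era observation, proved by a direct Martin-L\"of test argument, that every Martin-L\"of random real computes a $2^x$-bounded \textsf{DNR} function. Theorem~\ref{one}---the construction in Section~\ref{proof} of a single $G$ that is relatively \textsf{DNR} while no finite join $G_0\oplus\cdots\oplus G_i$ of its columns computes any $h$-\textsf{DNR} function---is the genuine obstacle, and I expect its proof to be a priority construction balancing the ``relatively \textsf{DNR}'' positive requirements against the negative requirements that defeat all potential $h$-\textsf{DNR} reductions. Granting those two results, the deduction of Theorem~\ref{DNRWWKL} is the routine ideal-theoretic argument above.
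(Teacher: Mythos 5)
Your proposal is correct and follows essentially the same route as the paper: fix $h$ from Theorem~\ref{kurt}, take the Turing ideal generated by the columns $G_i$ from Theorem~\ref{one}, use Lemma~\ref{lett} with Lemma~\ref{homer}(1) to get DNR, and use Theorem~\ref{kurt} plus Theorem~\ref{one} with Lemma~\ref{homer}(2) to refute WWKL$_0$. You merely spell out the step the paper states tersely as ``$\mc I$ contains no Martin-L\"of random real,'' which is exactly the argument intended there.
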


\begin{proof}
Let $h$ be as in Theorem~\ref{kurt} and let $\mc I$ be the Turing ideal
generated by the functions $G_i$ (for $i\in\omega$) of Theorem~\ref{one} for
this~$h$. By Theorem~\ref{kurt}, $\mc I$ contains no Martin-L\"of random
real. By Lemma~\ref{lett}, for each element $H$ of~$\mc I$, there is an
element~$K$ of~$\mc I$ such that~$K$ is \textsf{DNR}$^H$. Hence, by
Lemma~\ref{homer}, the $\omega$-model of RCA$_0$ whose second-order part
consists of all the sets in~$\mc I$ is a model of DNR in which WWKL$_0$ is
false.
\end{proof}

The following two theorems will not be used for the proof of
Theorem~\ref{DNRWWKL}, but seem to have independent interest.
Their proofs are based on the proof of Theorem \ref{theone-basic}.

\begin{thm}\label{wontprove}
There exists $G:\omega\to\omega$ such that $G$ is \textsf{DNR},
but $G$ does not compute any $h$-\textsf{DNR} function for any
recursive function~$h$.
\end{thm}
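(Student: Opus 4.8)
The plan is to run a forcing-style priority construction of the same flavor as the one proving Theorem~\ref{one}, but producing a single function $G\colon\omega\to\omega$ rather than a sequence of columns. We want $G$ to be \textsf{DNR} and, for all $e,j\in\omega$, to satisfy the requirement
\[
R_{\langle e,j\rangle}\colon\quad\text{if }\Phi_j\text{ is total, then }\Phi_e^G\text{ is not }\Phi_j\text{-}\textsf{DNR}.
\]
Meeting all $R_{\langle e,j\rangle}$ together with ``$G$ is \textsf{DNR}'' suffices: if $G$ computed an $h$-\textsf{DNR} function $f$ with $h$ recursive, then $f=\Phi_e^G$ for some $e$ and $h=\Phi_j$ for some total $\Phi_j$, violating $R_{\langle e,j\rangle}$. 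Keeping $G$ \textsf{DNR} is handled by the additional requirements ``$G(m)\ne\Phi_m(m)$ whenever $\Phi_m(m)\downarrow$''; the conditions of the forcing are finite \textsf{DNR} strings, and since every finite \textsf{DNR} string has a proper \textsf{DNR} extension, the generic $G$ will be total and \textsf{DNR}.

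For a single $R_{\langle e,j\rangle}$, starting from a current condition $\sigma$ (a finite \textsf{DNR} string) and assuming $\Phi_j$ total (otherwise the requirement is vacuous), we proceed as in the single-$h$ module behind Theorem~\ref{one}. \emph{Case A:} there are a \textsf{DNR} string $\tau\supseteq\sigma$ and an $x$ with $\Phi_e^\tau(x)\downarrow\,\geq\Phi_j(x)$; then we extend to $\tau$, and the requirement is permanently met, since $\Phi_e^G(x)\geq\Phi_j(x)$ for all $G\supseteq\tau$. (When $\Phi_e$ is the identity functional this case always applies, as we may extend $\sigma$ by one new coordinate carrying a large value distinct from the corresponding diagonal value.) \emph{Case B:} no such $\tau$ exists, so that $\Phi_e^G$, if total, is bounded by the recursive function $\Phi_j$. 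Here, by the recursion theorem applied uniformly in $\sigma$, we fix an index $\hat x=\hat x(\sigma)$ so that $\Phi_{\hat x}(\hat x)$ runs an effective, oracle-free search for a string $\tau\supseteq\sigma$ with $\Phi_e^\tau(\hat x)\downarrow$ satisfying a ``goodness'' clause guaranteeing that committing to $\tau$ is consistent with $G(\hat x)\ne\Phi_{\hat x}(\hat x)$, and outputs $\Phi_e^\tau(\hat x)$ for the first such $\tau$; the strategy performs the same search and commits to the same $\tau$. If the search halts, then $\Phi_e^G(\hat x)=\Phi_e^\tau(\hat x)=\Phi_{\hat x}(\hat x)\downarrow$, so $\Phi_e^G$ is not \textsf{DNR} and $R_{\langle e,j\rangle}$ is met; if it never halts, then no extension of $\sigma$ makes $\Phi_e^G(\hat x)$ converge, so $\Phi_e^G$ is partial and $R_{\langle e,j\rangle}$ is again met.

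These requirements, together with the \textsf{DNR} requirements for $G$, are organized on a tree of strategies in which each $R_{\langle e,j\rangle}$ guesses which of Cases~A and~B applies (the Case~A outcome being essentially $\Sigma^0_1$, the Case~B outcome $\Pi^0_1$); along the true path every requirement is eventually met and each coordinate of $G$ is constrained only finitely, so $G$ is a total \textsf{DNR} function computing no $h$-\textsf{DNR} function for any recursive~$h$. The hardest step will be the bookkeeping in Case~B: the witness $\tau$ that $\Phi_{\hat x}(\hat x)$ locates is produced by an r.e.\ search that cannot directly certify \textsf{DNR}-ness of the newly committed coordinates of $\tau$, so one must design the search and the ``goodness'' clause (restricting to strings not yet seen to violate the \textsf{DNR} condition, truncating to the use of the computation, and reconciling conflicts through the priority ordering of the \textsf{DNR} requirements) so that the string actually committed is simultaneously \textsf{DNR}-compatible and the one witnessing the value of $\Phi_{\hat x}(\hat x)$. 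This coordination is exactly the issue already faced, and resolved, in the proof of Theorem~\ref{one}, which the present argument adapts.
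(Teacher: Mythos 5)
Your global reduction is fine: it suffices to meet, for each pair $(e,j)$, the requirement that if $\Phi_j$ is total then $\Phi_e^G$ is not $\Phi_j$-\textsf{DNR}, while keeping $G$ itself \textsf{DNR}. The gap is inside your Case~B module, which is exactly where the difficulty of this paper lives. Your recursion-theorem index $\hat x$ runs a recursive search for a \emph{single} string $\tau\supseteq\sigma$ with $\Phi_e^\tau(\hat x)\downarrow$, filtered by a clause such as ``not yet seen to violate the \textsf{DNR} condition.'' Being \textsf{DNR}-compatible, and being compatible with the divergence promises made by earlier Case~B strategies, are $\Pi^0_1$ facts about $\tau$; an r.e.\ filter can only exclude strings already seen to be bad, so the first $\tau$ the search accepts may later turn out to be unusable. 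At that point you are stuck: $\Phi_{\hat x}(\hat x)$ has already converged to $\Phi_e^\tau(\hat x)$, there is no reason any usable string forces that same value, the coordinates of $G$ (and the \textsf{DNR} constraints on them) cannot be ``injured'' and restored, so priority reordering buys nothing, and repeated retries with fresh fixed points come with no argument that they ever stop failing. Thus in the halting case none of the three ways of meeting the requirement ($\Phi_e^G(\hat x)\uparrow$, $\Phi_e^G(\hat x)=\Phi_{\hat x}(\hat x)$, or $\Phi_e^G(\hat x)\ge\Phi_j(\hat x)$) is secured, and the requirement can fail outright.

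The missing idea is the good-tree (bushy-tree) machinery of Sections~\ref{proof-basic} and~\ref{proof}, which is what the paper means when it says Theorem~\ref{wontprove} is proved from the proof of Theorem~\ref{theone-basic}. The search attached to the fixed point (Definition~\ref{f-basic}) does not look for one string but for a finite $n$-good tree \emph{all} of whose elements force the \emph{same} value $i$ below the bound; because non-\textsf{DNR} strings admit no $2$-good tree above a \textsf{DNR} string (Lemma~\ref{nogood-basic}) and the standing divergence commitments give only finitely many bad properties to avoid -- finitely many precisely because the values are bounded -- the counting lemmas (\ref{kum-basic}, \ref{notsharp-basic}, \ref{g-basic}, \ref{induct-basic}) guarantee that such a tree contains an acceptable element to which $G$ can be extended, giving $\Phi_e^G(\hat x)=\Phi_{\hat x}(\hat x)$; and if no such tree exists, the acceptability sets preserve ``divergent or $\ge$ the bound'' forever (Lemma~\ref{diverge-basic}). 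Your Case~A/B dichotomy is one way to supply the needed bound, but note that only the single value $\Phi_j(\hat x)$ at the diagonalization point is required, so one can instead fold $\Phi_j$ directly into the bounded search: the fixed point's computation first waits for $\Phi_j$ to converge there, and if it never does, the requirement is vacuous while the construction harmlessly takes Case~2. Finally, since only the existence of $G$ is asserted, no effectiveness of the construction is needed, so the strategy-tree/true-path apparatus is superfluous; and your outcome classification is inaccurate in any case, since your Case~A quantifies over \textsf{DNR} strings, a $\Pi^0_1$ collection, so it is not a $\Sigma^0_1$ event.
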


\begin{thm}\label{wontproveeither}
For each recursive function $h:\omega\to\omega$ there exists a
recursive function $h^*:\omega\to\omega$ and a function
$G:\omega\to\omega$ such that $G$ is $h^*$-\textsf{DNR}, but for
all Turing functionals~$\Phi$, $\Phi^G$ is not $h$-\textsf{DNR}.
In fact, $h^*$ may be chosen elementary recursive relative to $h$.
\end{thm}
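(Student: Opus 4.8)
The plan is to prove this by a bounded version of the construction behind Theorem~\ref{theone-basic}; it is essentially the ``single~$h$'' analogue of Theorem~\ref{wontprove}, the point being that when only one recursive~$h$ has to be defeated, the freedom needed in the construction can be squeezed into a space $\prod_n h^*(n)$ for a recursive --- indeed elementary --- bound~$h^*$. So, fixing the recursive~$h$, I would define $h^*$ recursively in advance and build~$G$ inside the tree $T_0$ of all $h^*$-\textsf{DNR} strings; since every path through $T_0$ is automatically $h^*$-\textsf{DNR}, the only work is to arrange, for every~$e$, the requirement
\[
P_e\colon\quad\text{$\Phi_e^{G}$ is not an $h$-\textsf{DNR} function.}
\]
This is done by thinning $T_0$ to a nested sequence of subtrees $T_0\supseteq T_1\supseteq\cdots$, with $T_{e+1}$ handling~$P_e$, and taking~$G$ to be a path through them all. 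As in Theorem~\ref{wontprove}, $G$ itself need not be (and cannot be) recursive; only~$h^*$ and the combinatorial data of the trees are effective.

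To meet~$P_e$ I would, as in the proof of Theorem~\ref{theone-basic}, use Kleene's recursion theorem to reserve above the current stem a finite block of inputs that also double as diagonalization witnesses: for each reserved~$m$, the value $\Phi_m(m)$ is defined so that, on whatever string the construction finally commits to on that block, it agrees with $\Phi_e$ of that string at~$m$ whenever the latter converges below $h(m)$. The key estimate is that $h$-\textsf{DNR} strings are sparse --- there are at most $\prod_{n<N}h(n)$ of them of length~$N$ --- so the set of extensions of the stem along which $\Phi_e^{G}$ could still be $h$-\textsf{DNR} on the reserved block is ``small'', using up only boundedly many branches at each level, at a rate governed by~$h$. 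Hence $T_e$ can be pruned to a still sufficiently branching subtree $T_{e+1}$ avoiding that set, and on every path~$G$ of $T_{e+1}$ the function $\Phi_e^{G}$ is forced, at some reserved input~$m$, either to diverge, or to take a value $\ge h(m)$, or to equal $\Phi_m(m)$; in each case~$P_e$ is satisfied.

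The genuinely new point is the bookkeeping for~$h^*$. The branching freedom that must remain in $T_0$ at level~$n$ --- hence the size of $h^*(n)$ --- has to dominate the total amount removed on behalf of the requirements already active at level~$n$, which in turn is controlled by the number of such requirements and by the local sparsity rate of the $h$-\textsf{DNR} strings, i.e., by the relevant values of~$h$. (In particular $h^*$ cannot be taken bounded: an $h^*$-\textsf{DNR} function with $h^*$ bounded has \textsf{PA} degree and therefore already computes $h$-\textsf{DNR} functions.) The entire content of the ``elementary'' clause is that this demand can be estimated crudely but uniformly, so that one may lay out the blocks of reserved witnesses, the running branching budget, and the successive prunings so as to fit together with only an elementary blow-up over~$h$. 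Organizing that accounting --- rather than any new idea in the diagonalization --- is the main obstacle; once it is in place, the verification that $G$ is $h^*$-\textsf{DNR} and meets every~$P_e$ is a routine adaptation of the proof of Theorem~\ref{theone-basic}.
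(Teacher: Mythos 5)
The paper itself gives no detailed proof of Theorem~\ref{wontproveeither}; it only records that the proof ``is based on the proof of Theorem~\ref{theone-basic}'', and your outline does follow that intended route (run the Section~\ref{proof-basic} construction inside a recursively bounded tree, using the recursion theorem and a big-versus-small dichotomy per requirement). However, your sketch is soft at exactly the points where the work lies. First, the combinatorial core is misidentified: your ``key estimate'' that there are at most $\prod_{n<N}h(n)$ many $h$-\textsf{DNR} strings of length $N$, so that the bad extensions ``use up only boundedly many branches at each level'', is not the fact the construction runs on. The set of oracle strings $\tau$ for which $\Phi_e^\tau$ still looks $h$-\textsf{DNR} on the witnesses can be as wide as you like (e.g.\ if $\Phi_e$ ignores its oracle); what makes the pruning possible is not a counting or width bound but the good-tree smallness calculus of Lemmas~\ref{kum-basic}, \ref{notsharp-basic}, \ref{g-basic} and~\ref{induct-basic}: each value $i<h(m)$ not forceable on a good tree from the stem contributes a ``small'' set, smallness is preserved under finite unions at the cost of a factor $2^a$ with $a=\sum h(\cdot)$ in the goodness parameter, and one can steer inside a big tree avoiding finitely many small sets. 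The product $\prod h(m)$ enters only as the number of sets to avoid. Relatedly, defining $\Phi_m(m)$ so that it ``agrees with $\Phi_e$ of whatever string the construction finally commits to'' is not implementable as stated, since the committed string is not recursively available to the program with index $m$; the correct arrangement (Definitions~\ref{f-basic} and~\ref{con-basic}) is the reverse: the fixed-point program searches for a good tree from the current stem forcing some $i<h(m)$ and outputs that $i$, and the construction then follows the program into that tree (Case 1) or maintains divergence/largeness (Case 2).

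Second, and more importantly, the ``elementary'' clause --- the only assertion here that goes beyond Theorem~\ref{theone-basic} --- is not delivered by the ``crude but uniform'' accounting you invoke. The budget by which the goodness parameter (hence $h^*$ at later levels) must grow when requirement $s$ is added to the divergence set is $h(e_s)$, where $e_s$ is the recursion-theorem fixed point; and with the construction set up as in the warm-up (and as in your sketch), the program at $e_s$ must contain the current stem $G[s]$, whose values are bounded only by $h^*$ at earlier levels. Worst-casing this feedback forces a recursion of the shape $h^*(x)\approx 2^{\,h(\mathrm{elem}(\prod_{y<x}h^*(y)))}$, whose solution involves an $x$-fold nesting of $h$ with exponentials: this is recursive (indeed primitive recursive in $h$), so the first sentence of the theorem is within reach of your plan, but it is tower-like and hence not elementary relative to $h$ (already for $h(n)=n$ it grows like a tower of height about $x$). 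So the elementary bound requires an additional device --- some way of bounding the witnesses, and hence the budgets $h(\text{witness})$, elementarily in the level, independently of the literal values of the stem --- and your proposal neither supplies nor flags this; asserting that the demand ``can be estimated crudely but uniformly'' with ``only an elementary blow-up'' is precisely the step that does not follow.
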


Throughout the rest of this article, fix a recursive function
$h:\omega\to\omega$.

\section{Warm-up}\label{proof-basic}

This section is devoted to the proof of Theorem~\ref{theone-basic}, which
serves as a warm-up exercise for Theorem~\ref{one}.

\begin{thm}\label{theone-basic}
There exists $G:\omega\to\omega$ such that $G$ is \textsf{DNR}, but for all
Turing functionals~$\Phi$, $\Phi^G$ is not $h$-\textsf{DNR}.
\end{thm}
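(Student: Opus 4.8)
The plan is to construct $G$ by a finite-extension argument whose conditions are \emph{\textsf{DNR} strings}, i.e., finite strings $\sigma\in\omega^{<\omega}$ with $\sigma(x)\ne\Phi_x(x)$ for all $x<|\sigma|$ with $\Phi_x(x)\downarrow$. Because at each position exactly one value is forbidden, every such $\sigma$ has \textsf{DNR}-string extensions of arbitrary length, so the ``generic'' $G=\bigcup_s\sigma_s$ is automatically total and \textsf{DNR}. It then suffices, for each $e\in\omega$, to meet the requirement $R_e$: \emph{$\Phi_e^G$ is not $h$-\textsf{DNR}}. We treat the $R_e$ in turn, each acting on the \textsf{DNR} string $\sigma$ left behind by the earlier requirements.

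To meet $R_e$ from such a $\sigma$, I would first look for a \textsf{DNR} string $\tau\supseteq\sigma$ and an $n$ with $\Phi_e^\tau(n)\downarrow$ and $\Phi_e^\tau(n)\ge h(n)$; if one exists, pass to $\tau$ and $R_e$ is met permanently (we may make the value at a position as large as we like, since \textsf{DNR}-ness forbids only one). Otherwise, for every \textsf{DNR} string $\tau\supseteq\sigma$ and every $n$ we have $\Phi_e^\tau(n)\downarrow\implies\Phi_e^\tau(n)<h(n)$, and here I would use the recursion theorem to plant a \emph{trap}: fix a fresh (large) $n$ and, by the recursion theorem, arrange that $\Phi_n(n)$ dovetails over all pairs $(\tau,t)$ with $\sigma\subseteq\tau\in\omega^{<\omega}$ and accepts $(\tau,t)$ when $\Phi_{e,t}^\tau(n)\downarrow$ and $\tau$ ``looks \textsf{DNR} at stage $t$'' (i.e., $\tau(x)\ne\Phi_{x,t}(x)$ whenever the latter converges, for $|\sigma|\le x<|\tau|$); on the first accepted $(\tau_0,t_0)$ it outputs $\Phi_e^{\tau_0}(n)$. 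If the search never succeeds, then $\Phi_n(n)\uparrow$, and moreover no genuine \textsf{DNR} string extending $\sigma$ can make $\Phi_e(n)$ halt (such a string looks \textsf{DNR} at every stage), so $\Phi_e^G(n)\uparrow$ for the generic $G$ and $R_e$ is met by $\sigma$ itself. If the search succeeds with witness $\tau_0$ and value $v=\Phi_n(n)\downarrow$, I would extend $\sigma$ to a \textsf{DNR} string $\tau^*$ agreeing with $\tau_0$ on every position queried in the computation $\Phi_e^{\tau_0}(n)$; then $\Phi_e^G(n)=v=\Phi_n(n)$, so $\Phi_e^G$ is not diagonally nonrecursive at $n$, hence not $h$-\textsf{DNR}.

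The main obstacle is exactly the last step: the witness $\tau_0$ is only guaranteed to \emph{look} \textsf{DNR} at stage $t_0$, so some queried position $x$ may have $\Phi_x(x)$ converge to $\tau_0(x)$ only after stage $t_0$, and then no \textsf{DNR} extension of $\sigma$ can realize $\tau_0$'s value at $x$, so $\tau^*$ need not exist. I would handle this by \emph{iterating} the trap: when this happens we have discovered a previously unknown fact $\Phi_x(x)\downarrow=\tau_0(x)$, so we add $(x,\tau_0(x))$ to a running list $F$ of forbidden values, apply the recursion theorem again to plant a new trap at a fresh $n'$ whose search additionally rejects any $\tau$ violating a constraint in $F$, and repeat. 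Each round either succeeds (passing to a genuine \textsf{DNR} extension realizing the current trap value), or forces $\Phi_e^G$ to be partial at that round's trap position, or fails --- in which case that round's witness is thereafter permanently barred (its offending position has just entered $F$, and a genuine \textsf{DNR} string never violates $F$) and was the least such string in the dovetailing order. The delicate point, and the technical heart of the proof, is to argue that one cannot fail at every round: each failed round strictly enlarges $F$ and removes its witness from contention, and a bookkeeping argument --- exploiting the standing hypothesis that no \textsf{DNR} extension of $\sigma$ produces a value $\ge h(n)$ --- shows that the shrinking pool of eligible witnesses forces eventual success or forced partiality. (A little extra care is also needed so that $G$ stays \textsf{DNR} at the trap position $n$ itself, e.g.\ by requiring the witnessing computation not to read position $n$.) Once every $R_e$ has been met, the resulting \textsf{DNR} function $G$ is as desired.
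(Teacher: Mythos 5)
Your framework (forcing with \textsf{DNR} strings, the recursion-theorem ``trap'' $\Phi_e^G(n)=\Phi_n(n)$, and the dichotomy between forcing a value $\ge h(n)$, forcing divergence, or trapping) is the right shape, and your handling of totality, \textsf{DNR}-ness, and preservation of met requirements is fine. But the proof has a genuine gap exactly where you flag ``the technical heart'': you give no argument that the iterated-trap process terminates, and nothing in the sketch supports the claim that ``one cannot fail at every round.'' Each spoiled round merely adds one true fact $\Phi_x(x)\downarrow=\tau_0(x)$ to $F$ and discards one witness from an infinite pool; the next trap sits at a fresh $n'$, its first accepted witness is an essentially unrelated string, and it can be spoiled at an entirely new position. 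There is no pigeonhole here: the standing hypothesis only says converging values are $<h(n)$, but since each round uses a different $n$ and the trap commits to a single witness and a single value, the bound $h$ never interacts with the spoiling. Indeed, a functional $\Phi_e$ that on input $n$ simply reads the oracle at some position $q(n)$ and outputs a value $<h(n)$ computed from it is consistent with your standing hypothesis, and (using the recursion theorem on the adversary's side) the positions $q(n)$ can themselves be arranged to diagonalize against the canonical ``first witness'' your trap would pick, so that every round is spoiled. At minimum, the burden is on you to rule this out, and that is where the real content of the theorem lies.

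The paper's proof is precisely a machinery for closing this gap: instead of a single witness, the recursion-theorem trap (Definition~\ref{f-basic}) searches for an entire \emph{$n$-good tree} of strings, all forcing the \emph{same} value $i<h(e)$, and outputs $i$; the combinatorial lemmas (\ref{kum-basic}, \ref{notsharp-basic}, \ref{g-basic}, \ref{induct-basic}) together with the invariant maintained by the acceptable sets $A_s$ (Lemma~\ref{accept-basic}) guarantee that any such bushy tree contains a string that is genuinely extendible to $G$ while preserving all earlier divergence commitments --- so a successful trap can never be ``spoiled.'' If no such good tree exists, the construction commits to keeping $\Phi_s^G(e)$ divergent or $\ge h(e)$, and the growth factor $2^{\sum_t h(e_t)}$ in $g(n,\epsilon)$ is exactly what lets all these commitments be preserved simultaneously, since each committed computation has only $h(e_t)$ many dangerous values to hedge against. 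That finite hedging over the $h$-bounded values is where $h$ does its real work, and it has no counterpart in your single-witness scheme; without some such device your Round-$j$/Round-$(j+1)$ iteration is not known (and should not be expected) to halt.
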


To satisfy the requirement that $G$ be \textsf{DNR}, it will be convenient to
use the following definition.

\begin{df}[Section~\ref{proof-basic} only]\label{phizero-basic}
Let $\Phi_0$ be a Turing functional such that for all $G:\omega\to\omega$,
$\Phi^G_0\downarrow\iff\exists x.G(x)=\Phi_x(x)$, and if
$\Phi^G_0\downarrow=i\in\omega$ then $i=0$. Let $\Phi_n$, $n\ge 1$, be the
Turing functionals of Definition~\ref{firstdef}.
\end{df}

The following definition is based on concepts in Kumabe's unpublished
preprint \cite{Kumabe:XX}, in which he establishes the existence of a
fixed-point free minimal degree.

\begin{df}[Good trees]\label{tjo-basic}
A finite set of incomparable strings in $\omega^{<\omega}$ is called a
\emph{tree}. (Note that this differs from some common notions of tree.) Given
$a\in\omega$, a nonempty tree~$T$ is called \emph{$a$-good from
$\sigma\in\omega^{<\omega}$} if
\begin{itemize}
\item[(1)] every string $\tau\in T$ extends~$\sigma$, and
\item[(2)] for each $\tau\in\omega^{<\omega}$, if there exists
$\rho\in T$ with $\sigma\subseteq\tau\subset\rho$, then there are
at least $a$ many immediate successors of~$\tau$ which are
substrings of elements of~$T$.
\end{itemize}
If $T$ is $a$-good from~$\sigma$ and $T\subseteq P\subseteq\omega^{<\omega}$,
then $T$ is called \emph{$a$-good from~$\sigma$ for~$P$}.
\end{df}

\begin{lem}\label{verygoodisgood-basic}
Let $b\ge a\ge 1$, let $T,P\subseteq\omega^{<\omega}$ and
$\sigma\in\omega^{<\omega}$. If $T$ is $b$-good from $\sigma$ for~$P$ then
$T$ is $a$-good from~$\sigma$ for~$P$.
\end{lem}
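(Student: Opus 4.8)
The plan is simply to unwind Definition~\ref{tjo-basic} and observe that the only defining clause mentioning the numerical parameter is monotone in it. Suppose $T$ is $b$-good from~$\sigma$ for~$P$. By definition this unpacks into three assertions: $T\subseteq P\subseteq\omega^{<\omega}$; clause~(1), that every $\tau\in T$ extends~$\sigma$; and clause~(2) with the bound~$b$, namely that for every $\tau\in\omega^{<\omega}$ admitting some $\rho\in T$ with $\sigma\subseteq\tau\subset\rho$, there are at least~$b$ many immediate successors of~$\tau$ that are substrings of elements of~$T$.

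The containment $T\subseteq P\subseteq\omega^{<\omega}$ and clause~(1) make no reference to the parameter, so they transfer verbatim. For clause~(2), fix any $\tau\in\omega^{<\omega}$ for which there exists $\rho\in T$ with $\sigma\subseteq\tau\subset\rho$. By the $b$-goodness of~$T$ there are at least~$b$ many immediate successors of~$\tau$ which are substrings of elements of~$T$; since $b\ge a$, there are a fortiori at least~$a$ many such immediate successors. Thus clause~(2) holds with the bound~$a$, and~$T$ is $a$-good from~$\sigma$ for~$P$. (The hypothesis $a\ge 1$ is carried along only so that the resulting notion of goodness is not vacuous; it plays no further role.)

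I do not anticipate any genuine obstacle here: the statement is a pure weakening-of-parameter observation, requiring no induction on the structure of~$T$ and no case analysis. The ``hard part'', such as it is, amounts only to checking that one has faithfully transcribed the branching condition~(2) from Definition~\ref{tjo-basic} and noticed that the quantifier ``there exists $\rho\in T$ with $\sigma\subseteq\tau\subset\rho$'' triggering that condition is identical in the $a$- and $b$-versions, so that exactly the same set of strings~$\tau$ must be checked in both cases.
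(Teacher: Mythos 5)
Your proof is correct and is exactly the verification the paper is alluding to when it declares the lemma ``immediate from Definition~\ref{tjo-basic}'': the only clause involving the numerical parameter is the branching condition~(2), which is monotone, while clause~(1) and the inclusion $T\subseteq P$ are parameter-free.
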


Lemma~\ref{verygoodisgood-basic} is immediate from Definition
\ref{tjo-basic}. Note, however, that a tree that contains an $a$-good tree is
not necessarily itself $a$-good.

\begin{lem}[Lemma {\rm 2.2(v)} of\/ {\rm~\cite{Kumabe:XX}}]
\label{kum-basic} Let $n\ge 1$. Given a tree~$T$ that is
$(2n-1)$-good from a string~$\alpha$ and given a set $P\subseteq
T$, there is a subset~$S$ of~$T$ which is $n$-good for~$P$ or for
$T-P$.
\end{lem}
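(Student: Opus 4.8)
Following Kumabe, the plan is to prove a local strengthening by induction along the finite branching structure of~$T$ and then specialize. Call a string~$\tau$ a \emph{node} if $\alpha\subseteq\tau$ and $\tau$ is a substring of some element of~$T$; since $T$ is nonempty, $\alpha$ itself is a node. For a node~$\tau$, say $\tau$ is \emph{$P$-heavy} if some $S\subseteq P$ is $n$-good from~$\tau$, and \emph{$(T-P)$-heavy} if some $S\subseteq T-P$ is $n$-good from~$\tau$; because $P\subseteq T$, any such witness already lies in~$T$. I would prove that \emph{every node is $P$-heavy or $(T-P)$-heavy}. Applying this to the node~$\alpha$ then gives the lemma, since a subset of~$T$ that is $n$-good from~$\alpha$ and contained in~$P$ (respectively in~$T-P$) is precisely a subset of~$T$ that is $n$-good for~$P$ (respectively for~$T-P$).

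The induction is on $\ell(\tau)=\max\{\,|\rho|:\rho\in T,\ \tau\subseteq\rho\,\}-|\tau|$. If $\ell(\tau)=0$ then $\tau\in T$, and the singleton $\{\tau\}$ is (vacuously) $n$-good from~$\tau$, so $\tau$ is $P$-heavy when $\tau\in P$ and $(T-P)$-heavy otherwise. If $\ell(\tau)>0$ there is $\rho\in T$ with $\tau\subset\rho$, so applying condition~(2) of $(2n-1)$-goodness of~$T$ from~$\alpha$ with $\tau$ in the role of the branching string (using $\alpha\subseteq\tau\subset\rho$) yields immediate successors $\tau_1,\dots,\tau_m$ of~$\tau$, each a substring of an element of~$T$, with $m\ge 2n-1$. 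Each $\tau_j$ is a node with $\ell(\tau_j)<\ell(\tau)$, so by the inductive hypothesis each is $P$-heavy or $(T-P)$-heavy; fixing one such label for each $\tau_j$ and using the pigeonhole principle, at least $n$ of them — say $\tau_{j_1},\dots,\tau_{j_n}$ — share the same label, without loss of generality ``$P$-heavy''. Choosing witnesses $S_i\subseteq P$ that are $n$-good from~$\tau_{j_i}$, put $S=S_1\cup\cdots\cup S_n$.

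It then remains to check that $S$ is $n$-good from~$\tau$; it is a nonempty tree contained in~$P$ because each $S_i$ is a nonempty tree in~$P$ and elements of distinct~$S_i$ extend the pairwise incomparable strings~$\tau_{j_i}$. Condition~(1) is immediate, since every element of~$S$ extends some $\tau_{j_i}\supseteq\tau$. For condition~(2), suppose $\tau\subseteq\tau'\subset\rho$ with $\rho\in S$, say $\rho\in S_i$. If $\tau'=\tau$, then $\tau_{j_1},\dots,\tau_{j_n}$ are $n$ distinct immediate successors of~$\tau$, each a substring of every element of~$S_i\subseteq S$. If $\tau'\supsetneq\tau$, then $\tau'$ and $\tau_{j_i}$ are both substrings of~$\rho$ with $|\tau'|\ge|\tau|+1=|\tau_{j_i}|$, so $\tau_{j_i}\subseteq\tau'\subset\rho$; now condition~(2) for the $n$-good tree $S_i$ from~$\tau_{j_i}$ supplies $n$ immediate successors of~$\tau'$ that are substrings of elements of~$S_i\subseteq S$. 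This closes the induction.

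The only points that need care — and they are routine — are in the last paragraph: one must observe that a branching string~$\tau'$ strictly above~$\tau$ lies above exactly one chosen successor~$\tau_{j_i}$ (forced by incomparability of the~$\tau_{j_i}$ together with $\tau'\subset\rho$), so the required successor count transfers from the single witness~$S_i$; and one must note that the threshold $2n-1$ is exactly what makes the two-coloring ``$P$ versus $T-P$'' still leave $n$ monochromatic successors. There is no deeper obstacle: the statement is a purely combinatorial pruning fact about finite trees, with the hypothesis calibrated so that halving the branching preserves $n$-goodness.
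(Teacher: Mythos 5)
Your proof is correct and rests on exactly the same combinatorial core as the paper's: two-color the elements of~$T$ by membership in~$P$, and work from the leaves of~$T$ inward, using that $2n-1$ branches always leave at least~$n$ monochromatic successors. Your inductive formulation (every node carries a monochromatic $n$-good witness, assembled by unioning the witnesses at $n$ same-colored immediate successors) is in fact the more careful rendering: the paper's closing step ``It follows that $S$ is $n$-good'' is too quick for $S$ taken to be \emph{all} $i$-labelled strings in~$T$, since an $i$-labelled leaf can sit above an intermediate string of the opposite label, which then need not have $n$ successors leading into~$S$ --- one really wants only the $i$-labelled leaves reachable from~$\alpha$ along $i$-labelled chains, which is precisely the subtree your induction produces.
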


\begin{proof}
Give the elements of~$T$ the label 1 (0) if they are in~$P$ (not
in~$P$, respectively). Inductively, suppose $\beta$ extends
$\alpha$ and is a proper substring of an element of~$T$. Suppose
all the immediate successors of $\beta$ that are substrings of
elements of~$T$ have received a label. Give $\beta$ the label 1 if
at least half of its labelled immediate successors are labelled 1;
otherwise, give $\beta$ the label 0. This process ends after
finitely many steps when $\alpha$ is given some label
$i\in\{0,1\}$. Let $S$ be the set of $i$-labelled strings in~$T$.
If $i=1$ then $S$ is contained in~$P$, and if $i=0$ then $S$ is
contained in~$T-P$, so it only remains to show that $S$ is
$n$-good.

Let $L$ be the set of all labelled strings. Note that $L$ is the
set of strings extending $\alpha$ that are substrings of elements
of $T$. For any $\beta\in L-T$, let $k$ be the number of immediate
successors of $\beta$ that are in $L$. Since $T$ is $(2n-1)$-good,
$k\ge 2n-1$. Let $p\le k$ be the number of immediate successors of
$\beta$ that have the same label as $\beta$. By construction,
$p\ge k/2$, and hence $p\ge n$. It follows that $S$ is $n$-good.
\end{proof}

The following lemma is not particularly sharp, but is sufficient for our
purposes.

\begin{lem}\label{notsharp-basic}
Let $a,n\ge 1$. Let $T$ be a tree which is $2^{a-1}n$-good from a string
$\alpha$, and let $P_1,\ldots,P_a$ be sets of strings such that
$T\subseteq\Union_i P_i$. Then for some~$i$, $T$~has a subset which is
$n$-good from $\alpha$ for~$P_i$.
\end{lem}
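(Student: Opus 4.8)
The plan is to argue by induction on~$a$, with Lemma~\ref{kum-basic} as the engine that peels off one of the sets~$P_i$ at a time.

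For the base case $a=1$ there is nothing to do: the hypothesis says $T$ is $2^{0}n=n$-good from~$\alpha$ and $T\subseteq P_1$, so $T$ itself is $n$-good from~$\alpha$ for~$P_1$ by Definition~\ref{tjo-basic}.

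For the inductive step I would assume the statement for~$a$ and prove it for $a+1$. So let $T$ be $2^{a}n$-good from~$\alpha$ with $T\subseteq P_1\union\cdots\union P_{a+1}$. Put $m=2^{a-1}n$, so that $2m-1=2^{a}n-1\le 2^{a}n$ and $m\ge n\ge 1$; by Lemma~\ref{verygoodisgood-basic}, $T$ is $(2m-1)$-good from~$\alpha$, so Lemma~\ref{kum-basic} applies with the set $P_{a+1}\inter T$ in the role of~$P$. It produces a subtree $S\subseteq T$ which is $m$-good from~$\alpha$ either for $P_{a+1}\inter T$ (and hence for~$P_{a+1}$) or for $T-P_{a+1}$. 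In the first case, since $m=2^{a-1}n\ge n$, Lemma~\ref{verygoodisgood-basic} shows $S$ is $n$-good from~$\alpha$ for~$P_{a+1}$, and $S\subseteq T$, so we are done with $i=a+1$. In the second case, every element of~$S$ lies in~$T$ and hence in some~$P_i$, but no element of~$S$ lies in~$P_{a+1}$ (as $S\subseteq T-P_{a+1}$); thus $S\subseteq P_1\union\cdots\union P_a$. Now $S$ is a tree that is $2^{a-1}n$-good from~$\alpha$ and is covered by the $a$ sets $P_1,\ldots,P_a$, so the inductive hypothesis yields a subtree $S'\subseteq S\subseteq T$ which is $n$-good from~$\alpha$ for some~$P_i$ with $i\le a$. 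In either case $T$ has a subtree that is $n$-good from~$\alpha$ for one of the~$P_i$, completing the induction.

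Since Lemma~\ref{kum-basic} does the real work, the only point to watch is the arithmetic of the goodness thresholds: at each stage one must check that the source tree is $(2m-1)$-good and that $m\ge 1$, which here reduces to the trivial inequalities $2^{a}n\ge 2^{a}n-1$ and $2^{a-1}n\ge 1$ for $a,n\ge 1$, together with Lemma~\ref{verygoodisgood-basic} for passing to smaller thresholds. I do not expect any genuine obstacle in carrying this out.
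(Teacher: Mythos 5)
Your proof is correct and is essentially the paper's own argument: induction on the number of sets, using Lemma~\ref{kum-basic} to split off one set per step (the paper peels off $P_1$ rather than $P_{a+1}$) and Lemma~\ref{verygoodisgood-basic} to adjust the goodness thresholds. The differences are purely cosmetic.
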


\begin{proof}
The case $a=1$ is trivial; the subset is $T$ itself. So assume $a\ge 2$ and
assume that Lemma~\ref{notsharp-basic} holds with $a-1$ in place of~$a$. By
Lemma~\ref{kum-basic}, if there is no $2^{a-2}n$-good subset of~$T$ from
$\alpha$ for~$P_1$ then there is a $2^{a-2}n$-good subset~$S$ of~$T$ from
$\alpha$ for the complement $\overline P_1$. As $T\inter\overline
P_1\subseteq P_2\union\cdots\union P_a$, it follows that $S$ is
$2^{a-2}n$-good from~$\alpha$ for $P_2\union\cdots\union P_a$. By Lemma
\ref{notsharp-basic} with $a-1$ in place of~$a$, $S$ has a subset~$R$ which
is $n$-good from~$\alpha$ for some~$P_i$, $i\ge 2$. As $R$ is also a subset
of~$T$, the proof is complete.
\end{proof}

\begin{df}\label{gvec-basic}
Let $\epsilon:\omega\to\omega$ be a finite partial function and write
$e_t=\epsilon(t)$ for each~$t$ in the domain of~$\epsilon$.

Let $\Phi$ be any Turing functional such that for all $G:\omega\to\omega$,
\[\Phi^G(\epsilon)\downarrow\iff\exists t\in
\text{dom}(\epsilon)\,[\Phi^G_t(e_t)\downarrow<h(e_t)]\] Given $n\in\omega$
and~$\epsilon$, let $g(n,\epsilon)=2^a n$ where
$$
a=\sum_{t\in \text{dom}(\epsilon)} h(e_t).
$$
\end{df}

Suppose we have a sequence of computations (namely, $\Phi_t(e_t)$ for those
$t$ where $e_t$ is defined) that we would like to maintain the divergence of,
while specifying more and more of the oracle for the computations. Then we
can use Definition~\ref{gvec-basic} as follows: Given $n\in\omega$, there
exists a number $g=g(n,\epsilon)\in\omega$ such that if none of the
computations $\Phi^G_t(e_t)$ converge and take values dominated by~$h$ on any
$n$-good tree of strings, then $\Phi^G(\epsilon)$ does not converge on any
$g$-good tree of strings. Lemma~\ref{g-basic} spells this out.

\begin{lem}\label{g-basic}
Let $n\ge 1$, let $\epsilon$ be a finite partial function from~$\omega$ to
$\omega$, and let $g$ be the function defined in Definition~\ref{gvec-basic}.

For each pair $(t,i)$ satisfying $i<h(e_t)$ (where $h$ is as in Section
\ref{firstsec}) and $t\in \mathop{\rm dom}(\epsilon)$, let
$Q_{(t,i)}=\{\beta:\Phi^\beta_t(e_t)=i\}$. Let
$Q=\{\beta:\Phi^\beta(\epsilon)\downarrow\}$.

If there is a $g(n,\epsilon)$-good tree for~$Q$ from some string~$\alpha$,
then for some $(t,i)$, there is an $n$-good tree from~$\alpha$ for
$Q_{(t,i)}$.
\end{lem}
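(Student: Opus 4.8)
The plan is to argue contrapositively, building a single $g(n,\epsilon)$-good tree for $Q$ from the assumed failure of all the $n$-good subtree conclusions. So suppose, for each pair $(t,i)$ with $t\in\mathop{\rm dom}(\epsilon)$ and $i<h(e_t)$, that there is \emph{no} $n$-good tree from $\alpha$ for $Q_{(t,i)}$. I want to derive that there is no $g(n,\epsilon)$-good tree for $Q$ from $\alpha$. The key observation is that a string $\beta$ lies in $Q=\{\beta:\Phi^\beta(\epsilon)\downarrow\}$ exactly when, by the defining property of $\Phi$ in Definition~\ref{gvec-basic}, there is some $t\in\mathop{\rm dom}(\epsilon)$ with $\Phi^\beta_t(e_t)\downarrow<h(e_t)$, i.e., $\beta\in Q_{(t,i)}$ for some $i<h(e_t)$. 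Hence $Q\subseteq\bigcup_{(t,i)} Q_{(t,i)}$, where the union ranges over exactly $a=\sum_{t\in\mathop{\rm dom}(\epsilon)} h(e_t)$ many sets.

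Now I would invoke Lemma~\ref{notsharp-basic} with this value of $a$, with $n$ as given, and with the $P_j$'s taken to be an enumeration of the $Q_{(t,i)}$'s. That lemma says: if a tree $T$ is $2^{a-1}n$-good from $\alpha$ and $T\subseteq\bigcup_j P_j$, then some $P_j$ has a subset of $T$ which is $n$-good from $\alpha$ for $P_j$. Actually I want it in the cleaner form that yields $g(n,\epsilon)=2^a n$: if $T$ is a $g(n,\epsilon)$-good tree for $Q$ from $\alpha$, then since $T\subseteq Q\subseteq\bigcup_{(t,i)}Q_{(t,i)}$ and $2^a n \ge 2^{a-1}n$, Lemma~\ref{verygoodisgood-basic} lets me downgrade $T$ to being $2^{a-1}n$-good, and then Lemma~\ref{notsharp-basic} produces a subtree of $T$ — hence a tree from $\alpha$ — which is $n$-good for some $Q_{(t,i)}$. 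This directly contradicts our assumption, completing the contrapositive. (In fact one could run Lemma~\ref{notsharp-basic} directly with the given $2^a n$-good $T$ since $2^a n \ge 2^{a-1}n$; the slack only makes the hypothesis easier.)

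The only point requiring a little care is the bookkeeping of the index set. One must check that the family $\{Q_{(t,i)} : t\in\mathop{\rm dom}(\epsilon),\ i<h(e_t)\}$ really has cardinality exactly $a$ (counting a repeated set once is harmless, since having fewer sets only makes Lemma~\ref{notsharp-basic} easier to apply — one pads with copies if necessary to reach exactly $a$), and that the covering $Q\subseteq\bigcup_{(t,i)}Q_{(t,i)}$ is exactly the content of the ``$\iff$'' in Definition~\ref{gvec-basic} restricted to strings $\beta$: the forward direction gives a witnessing $t$ with a converging computation of value below $h(e_t)$, and that value is the required $i$. I expect the main (though still modest) obstacle to be making sure the notion of ``tree from $\alpha$'' is preserved when passing to subsets: a subset of a tree is again a finite antichain, and every element still extends $\alpha$, so this is routine given Definition~\ref{tjo-basic}, but it should be stated explicitly. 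No genuine difficulty is anticipated beyond correctly threading together Lemmas~\ref{verygoodisgood-basic} and~\ref{notsharp-basic} with the defining property of $\Phi$.
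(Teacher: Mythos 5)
Your proposal is correct and matches the paper's own proof in essence: both cover $Q$ by the sets $Q_{(t,i)}$ via the defining property of $\Phi$ from Definition~\ref{gvec-basic} and then apply Lemma~\ref{notsharp-basic} after noting $2^a n\ge 2^{a-1}n$; your contrapositive framing is only a cosmetic difference. (The paper also records explicitly that the existence of a good tree for $Q$ forces $a>0$, which your argument handles implicitly since $a=0$ would make $Q$ empty.)
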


\begin{proof}
The number of pairs $(t,i)$ such that $Q_{(t,i)}$ is defined is
$$
a=\sum_{t\in \text{dom}(\epsilon)} h(e_t).
$$
By the assumption that there is a $g(n,\epsilon)$-good tree for~$Q$, it
follows that $a>0$. So since $2^an\ge 2^{a-1}n$, every $2^a n$-good tree is
$2^{a-1}n$-good. Now apply Lemma~\ref{notsharp-basic} to the properties
$Q_{(t,i)}$.
\end{proof}

The following Definition~\ref{claim-basic} will be used in Section
\ref{proof}. We include it here for cross-reference with Lemma~\ref{claim}.

\begin{df}\label{claim-basic}
A tree~$T$ is \emph{$n/m$-good} (read: \emph{$n$-over-$m$ good})
\emph{from~$\alpha$} if there are $m$ many immediate successors of~$\alpha$
which have extensions in~$T$, and for any~$\beta$ having a proper extension
in~$T$, $\beta$ a proper superstring of~$\alpha$, there are $n$ many
immediate successors of~$\beta$ which have extensions in~$T$.
\end{df}

Note that if we imagine trees as growing upwards, this means $T$ is~$n$
``over''~$m$ good in a pictorial sense.

\begin{lem}\label{induct-basic}
Suppose we are given $\alpha$ and~$n$ and a set $P\subseteq \omega^{<\omega}$
such that there is no $n$-good tree from~$\alpha$ for~$P$.

Then if $V$ is an $n$-good tree from~$\alpha$ then there exists $\beta$ such
that
\begin{enumerate}
\item $\beta$ extends an element of~$V$, and \item there is no
$n$-good tree from~$\beta$ for~$P$.
\end{enumerate}
\end{lem}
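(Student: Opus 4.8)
The plan is to prove the contrapositive by exhibiting a forbidden tree. Suppose the conclusion fails for this particular~$V$, so that for \emph{every} string $\beta$ extending an element of~$V$ there exists an $n$-good tree from~$\beta$ for~$P$. I will use these trees, together with the branching already present in~$V$, to assemble an $n$-good tree from~$\alpha$ for~$P$, contradicting the hypothesis of the lemma.

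Since a tree is by definition a finite antichain of strings, write $V=\{v_1,\dots,v_k\}$ with the $v_j$ pairwise incomparable and each extending~$\alpha$ (here $k\ge 1$, as an $n$-good tree is nonempty). For each $j\le k$, apply the failure assumption with $\beta=v_j$ to obtain an $n$-good tree $T_j$ from~$v_j$ for~$P$; thus $T_j$ is a nonempty antichain with $T_j\subseteq P$ and every element of $T_j$ extending~$v_j$. Put $T=\Union_{j\le k}T_j$. Then $T$ is a finite set of strings extending~$\alpha$ with $T\subseteq P$, and it is an antichain: two elements from distinct $T_j$'s extend the incomparable strings $v_j$, hence cannot be comparable. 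So $T$ is a tree with $T\subseteq P$, and it only remains to verify the branching condition of Definition~\ref{tjo-basic}(2) for~$T$ from~$\alpha$.

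Fix $\tau$ with $\alpha\subseteq\tau\subset\rho$ for some $\rho\in T$, say $\rho\in T_j$. Since $\tau$ and $v_j$ are both substrings of~$\rho$ they are comparable, so either $v_j\subseteq\tau$ or $\tau\subset v_j$. In the first case $v_j\subseteq\tau\subset\rho\in T_j$, and the $n$-goodness of $T_j$ from~$v_j$ gives $n$ immediate successors of~$\tau$ having extensions in $T_j\subseteq T$. In the second case $\alpha\subseteq\tau\subset v_j\in V$, so the $n$-goodness of $V$ from~$\alpha$ gives $n$ distinct immediate successors of~$\tau$, each a substring of some $v_l\in V$; as $T_l$ is nonempty with all elements extending~$v_l$, each such immediate successor is a substring of an element of $T_l\subseteq T$. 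Either way $\tau$ has at least $n$ immediate successors which are substrings of elements of~$T$, so $T$ is an $n$-good tree from~$\alpha$ for~$P$ --- the desired contradiction. I expect no real obstacle here; the one point needing care is that the interior nodes of~$V$ need not lie in~$P$, so $V$ cannot be reused verbatim and one must instead regrow a $P$-subtree above each tip $v_j$ of~$V$, checking the branching condition separately at nodes lying above some $v_j$ and at nodes lying strictly below all of them.
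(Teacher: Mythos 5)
Your proposal is correct and is essentially the paper's own argument: the paper likewise assumes each element $\beta$ of $V$ admits an $n$-good tree $V_\beta$ from $\beta$ for $P$ and observes that $V^*=\Union_{\beta\in V}V_\beta$ is then $n$-good from $\alpha$ for $P$, contradicting the hypothesis. You merely spell out the antichain and branching verifications (splitting into nodes above a tip of $V$ and nodes strictly below one) that the paper leaves implicit.
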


\begin{proof}
In fact, there exists such $\beta$ which is an element of~$V$, since
otherwise, letting $V_\beta$ be a counterexample for~$\beta$,
$$
V^*=\Union\limits_{\beta \supseteq \alpha,\,\beta \in V} V_\beta
$$
would be $n$-good from~$\alpha$ for~$P$.
\end{proof}

\begin{df}\label{f-basic}
Given a string $\alpha\in\omega^{<\omega}$, $c\in\omega$, and $n\in\omega$,
let $f=f_{\alpha,c,n}$ be defined by the condition: $\Phi_{f(e),t}(x)=i$ if
in $t$ steps a finite tree~$T$ and a number $i<h(e)$ are found such that $T$
is $n$-good from~$\alpha$ for $\{\beta:\Phi_c^{\beta}(e)=i\}$ (and $i$ is the
$i$ occurring for the first such tree found). If such $T$ and $i$ are not
found within $t$ steps, then $\Phi_{f(e),t}(x)\uparrow$.
\end{df}

\begin{df} \label{con-basic} \emph{The Construction.}

At any stage $s+1$, the finite set $D_{s+1}$ will consist of indices $t\le s$
for computations $\Phi^G_t$ that we want to ensure to be divergent. The set
$A_{s+1}$ will consist of what we think of as acceptable strings.

\noindent\emph{Stage 0.}

Let $G[0]=\emptyset$, the empty string, and $\epsilon[0]=\emptyset$. Let
$n[0]=2$. Let $D_0=\emptyset$ and $A_0=\omega^{<\omega}$.

\noindent\emph{Stage $s+1$, $s\ge 0$.}

Let $n[s+1]=g(n[s],\epsilon[s])$, with $g$ as in Definition~\ref{gvec-basic}.

Below we will define $D_{s+1}$. Given $D_{s+1}$, $A_{s+1}$ will be the set of
strings~$\tau$ properly extending $G[s]$ such that for each $t\in D_{s+1}$,
there is no pair $\la T,i\ra$ such that $i<h(e_t)$ and $T$ is a finite
$n[s+1]$-good tree from~$\tau$ for
$Q_{(t,i)}=\{\sigma:\Phi_t^{\sigma}(e_t)\downarrow=i\}$.

Let $e$ be the fixed point of $f=f_{G[s],s,n[s+1]}$ (as defined in Definition
\ref{f-basic}) produced by the Recursion Theorem, i.~e.,
$\Phi_e=\Phi_{f(e)}$.

\noindent\emph{Case 1.} $\Phi_e(e)\downarrow$.

Fix $T$ as in Definition~\ref{f-basic}. Let $D_{s+1}=D_s$. Let $G[s+1]$ be an
extension of~$G[s]$ such that $G[s+1]\in T$ and $G[s+1]\in A_{s+1}$.

\noindent\emph{Case 2.} $\Phi_e(e)\uparrow$. Let
$D_{s+1}=D_s\union\{s\}$. Let $\epsilon[s+1]=\epsilon[s]\union\{(s,e)\}$. In
other words, $e_s=\epsilon(s)$ exists and equals~$e$. Let $G[s+1]$ be any
element of~$A_{s+1}$.

\noindent Let $G=\Union_{s\in\omega} G[s]$.

\noindent\emph{End of Construction.}
\end{df}

\noindent
We now prove that the Construction satisfies Theorem~\ref{theone-basic} in a
sequence of lemmas.

\begin{lem}\label{atleasttwo-basic}
For each $s,t\in\omega$ with $t\le s$, $n_t[s]\ge 2$.
\end{lem}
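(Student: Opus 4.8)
The plan is to prove this by a straightforward induction on~$s$, using only the recursion $n[s+1]=g(n[s],\epsilon[s])$ from the Construction (Definition~\ref{con-basic}) together with the explicit formula for~$g$ supplied in Definition~\ref{gvec-basic}. Here $n_t[s]$ denotes the quantity $n[s]$ appearing in the Construction; the subscript merely records the requirement whose bookkeeping we are reading off, and the value itself does not depend on~$t$.

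For the base case $s=0$, the only $t\le s$ is $t=0$, and Stage~0 of the Construction sets $n[0]=2\ge 2$ outright. For the inductive step, assume $n[s]\ge 2$. By the Construction, $n[s+1]=g(n[s],\epsilon[s])$, and by Definition~\ref{gvec-basic} this equals $2^{a}n[s]$, where
\[
a=\sum_{t\in\text{dom}(\epsilon[s])}h(e_t).
\]
Since $a$ is a sum of finitely many nonnegative terms (and is vacuously~$0$ when $\text{dom}(\epsilon[s])=\emptyset$, as at Stage~$1$), we have $a\ge 0$, hence $2^{a}\ge 1$, and therefore $n[s+1]=2^{a}n[s]\ge n[s]\ge 2$ by the induction hypothesis. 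This closes the induction, and every $t\le s$ then satisfies $n_t[s]=n[s]\ge 2$.

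The only point that deserves any attention — and it is a mild one — is the observation that $g$ is nondecreasing in its first argument, i.e.\ $g(n,\epsilon)\ge n$ for every $n$ and every finite partial function~$\epsilon$, precisely because the exponent~$a$ is always at least~$0$. Consequently $(n[s])_{s\in\omega}$ is a nondecreasing sequence with first term~$2$, and no genuine obstacle arises; the lemma is purely a matter of tracking the monotonicity built into the definition of~$g$.
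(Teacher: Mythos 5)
Your proof is correct and takes essentially the same approach as the paper: induction on $s$ using $n[0]=2$ and the fact that $n[s+1]=g(n[s],\epsilon[s])=2^{a}n[s]$ with $a\ge 0$, so the sequence is nondecreasing. You also correctly note that in this warm-up section $n_t[s]$ does not actually depend on $t$ (the subscript appears in the lemma statement only to parallel the generalization in Section~\ref{proof}).
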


\begin{proof}
For $s=0$, we have $n[0]=2$. For~$s+1$, we have
$n[s+1]=g(n[s],\epsilon[s])=2^a n[s]$ for a certain $a\ge 0$, by Definition
\ref{g-basic}, hence the lemma follows.
\end{proof}

\begin{lem}\label{accept-basic}
For each $s\ge 0$ the following holds.

\begin{enumerate}
\item[(1)] The Construction at stage~$s$ is well-defined and $G[s]\in A_s$.
In particular, if $s>0$ then in Case 2, $A_s$ is nonempty, and in Case 1,
$A_s$ contains at least one element of~$T$.

\item[(2)] There is no $n[s+1]$-good tree for
$Q=\{\beta:\Phi^\beta(\epsilon[s])\downarrow\}$ from~$G[s]$.

\item[(3)] Every tree~$V$ which is $n[s+1]$-good from~$G[s]$, and
is not just the singleton of~$G[s]$, contains an element of~$A_{s+1}$.
\end{enumerate}
\end{lem}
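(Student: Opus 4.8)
The plan is to prove parts (1), (2), (3) by a single induction on $s$, the pieces interlocking across only one stage: (1) at stage $s+1$ will come from (3) at stage $s$ (plus the Recursion Theorem), (2) at stage $s$ from (1) at stage $s$ via Lemma~\ref{g-basic}, and (3) at stage $s$ from (1) at stage $s$ — i.e.\ from $G[s]\in A_s$ — together with the hypothesis of whichever case is taken at stage $s+1$. The base case $s=0$ is trivial ($G[0]=\emptyset\in\omega^{<\omega}=A_0$, and $\epsilon[0]=\emptyset$ makes the set $Q$ of (2) empty). For (1) at stage $s+1$, the data $n[s+1]=g(n[s],\epsilon[s])$, $D_{s+1}$, and the fixed point $e$ of $f_{G[s],s,n[s+1]}$ are defined outright (the last by the Recursion Theorem), so the only point is the choice of $G[s+1]$: in Case~2 we need $A_{s+1}\neq\emptyset$, in Case~1 we need $T\cap A_{s+1}\neq\emptyset$ for the tree $T$ returned by $f$ (should this $T$ be $\{G[s]\}$, replace it by $\{G[s]{*}j:j<n[s+1]\}$, which is $n[s+1]$-good from $G[s]$ for the same $Q_{(s,i)}$ by finiteness of use); either way one is holding a non-singleton $n[s+1]$-good tree from $G[s]$, so (3) at stage $s$ supplies an element of $A_{s+1}$ inside it, and it extends $G[s]$ properly since $n[s+1]\geq 2$ (Lemma~\ref{atleasttwo-basic}). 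Part (2) at stage $s$ is then immediate: $G[s]\in A_s$ says there is no $n[s]$-good tree from $G[s]$ for any $Q_{(t,i)}$ with $t\in D_s=\mathrm{dom}(\epsilon[s])$, and since $Q=\{\beta:\Phi^\beta(\epsilon[s])\downarrow\}$ is exactly the set in Lemma~\ref{g-basic}, the contrapositive of that lemma (with $n=n[s]$, $\alpha=G[s]$, $g(n[s],\epsilon[s])=n[s+1]$) rules out an $n[s+1]$-good tree from $G[s]$ for $Q$.

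The substance is part (3). Suppose $V$ is a non-singleton $n[s+1]$-good tree from $G[s]$ with $V\cap A_{s+1}=\emptyset$. Every $\tau\in V$ then carries a pair $(t_\tau,i_\tau)$ with $t_\tau\in D_{s+1}$, $i_\tau<h(e_{t_\tau})$, and an $n[s+1]$-good tree $T_\tau$ from $\tau$ for $Q_{(t_\tau,i_\tau)}$; gluing along the pairwise-incomparable leaves of $V$ yields $V^*=\bigcup_{\tau\in V}T_\tau$, still $n[s+1]$-good from $G[s]$, and contained in the union of the finitely many sets $Q_{(t,i)}$, $t\in D_{s+1}$. In the Case~1 situation $D_{s+1}=D_s$, so there are exactly $a=\sum_{t\in D_s}h(e_t)$ such sets and $n[s+1]=2^a n[s]=2^{a-1}(2n[s])$; Lemma~\ref{notsharp-basic} then extracts from $V^*$ a subtree that is $2n[s]$-good, hence $n[s]$-good, from $G[s]$ for a single $Q_{(t,i)}$, contradicting $G[s]\in A_s$. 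This closes Case~1, and it is precisely the role of $g$: the factor $2^a$ is the branching Lemma~\ref{notsharp-basic} must spend to diagonalise against the $a$ active requirement-pairs.

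The main obstacle is the Case~2 situation at stage $s+1$, where $D_{s+1}=D_s\cup\{s\}$ contributes $h(e)$ extra sets $Q_{(s,0)},\dots,Q_{(s,h(e)-1)}$ while $n[s+1]$ remains $2^a n[s]$, so the uniform count no longer balances. The extra leverage is the Case~2 hypothesis $\Phi_e(e)\uparrow$: it says $f_{G[s],s,n[s+1]}$ never finds a tree, i.e.\ there is \emph{no} $n[s+1]$-good tree from $G[s]$ for any $Q_{(s,i)}$ — a certification at the full level $n[s+1]$, stronger than the level-$n[s]$ certification available for the old pairs. One accordingly treats the new requirement separately: the $a$ old pairs are handled by the glue/Lemma~\ref{notsharp-basic} argument above (which uses only the $2^a$ room in $n[s+1]$), while the new one is dispatched by a descent along $V$ using Lemma~\ref{induct-basic} fed by the $\Phi_e(e)\uparrow$ certification — noting that if $\Phi_s^\rho(e)\downarrow$ for some $\rho$ extending a candidate $\tau$, then there is a trivial $n[s+1]$-good tree from $\tau$ for the corresponding $Q_{(s,i)}$. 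Getting this split to balance, so that the $2^a n[s]$-fold branching of $V$ suffices at once for every requirement live at stage $s+1$, is the delicate accounting; everything else is routine manipulation of good trees via Lemmas~\ref{verygoodisgood-basic}, \ref{kum-basic}, \ref{notsharp-basic}, and~\ref{induct-basic}.
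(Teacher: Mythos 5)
Your base case, your derivation of (2) from $G[s]\in A_s$ via Lemma~\ref{g-basic}, and your passage from (3) to (1) at the next stage (including replacing a singleton $T$ by the tree of immediate extensions of $G[s]$) all match the paper, and your Case~1 half of (3) is correct: it is the paper's chain (1)$\Rightarrow$(2)$\Rightarrow$(3) with Lemmas~\ref{g-basic} and~\ref{induct-basic} inlined into one gluing-plus-Lemma~\ref{notsharp-basic} computation. The genuine gap is that the Case~2 half of part (3) is never proved: you describe a plan (spend the $2^a$ room in $n[s+1]$ on the old pairs, dispatch the new pairs $(s,i)$, $i<h(e)$, by a descent along $V$ ``fed by $\Phi_e(e)\uparrow$'') and then concede that making the split balance is ``the delicate accounting'', i.e.\ the decisive step is missing. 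As sketched it does not close: Lemma~\ref{induct-basic} treats one set $P$ at a time, so separate applications to $Q$ and to each $Q_{(s,i)}$ may select different elements of $V$, and recombining them by gluing and Lemma~\ref{notsharp-basic} requires branching on the order of $2^{h(e)}n[s+1]$, whereas both $V$ and the certification supplied by $\Phi_e(e)\uparrow$ (no $n[s+1]$-good tree from $G[s]$ for any $Q_{(s,i)}$) live only at level $n[s+1]=2^a n[s]$. Your parenthetical reduction is also incorrect: if $\Phi_s^{\rho}(e)\downarrow=i$ for some $\rho$ properly extending a candidate $\tau$, the singleton $\{\rho\}$ is an $n[s+1]$-good tree for $Q_{(s,i)}$ from $\rho$, not from $\tau$, so convergence above $\tau$ does not by itself witness $\tau\notin A_{s+1}$.

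For comparison, the paper's proof of (2)$\Rightarrow$(3) makes no case distinction and spends no extra branching on the pair entering $D_{s+1}$ in Case~2: it applies Lemma~\ref{induct-basic} once with $P=Q=\{\beta:\Phi^\beta(\epsilon[s])\downarrow\}$ to obtain a single $\beta\in V$ with no $n[s+1]$-good tree from $\beta$ for $Q$, and then invokes the inclusion $Q_{(t,i)}\subseteq Q$ to put $\beta$ into $A_{s+1}$. That inclusion is immediate for $t\in D_s=\text{dom}(\epsilon[s])$, which is exactly your Case~1; for the new pair $(s,e_s)$ of Case~2 it is not available (the pair joins $\epsilon$ only at stage $s+1$), so the difficulty you isolated is a real one and is precisely where a complete write-up must say more. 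But however that point is ultimately handled, your proposal as it stands does not establish part (3), and hence does not establish the lemma.
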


\begin{proof}
It suffices to show that (1) holds for~$s=0$, and that for each $s\ge 0$, (1)
implies (2) which implies (3), and moreover that (3) for~$s$ implies (1) for
$s+1$.

\noindent (1) holds for~$s=0$ because $G[0]=\emptyset\in\omega^{<\omega}=A_0$.

\noindent\emph{(1) implies (2):}

By definition of~$A_s$ and the fact that $G[s]\in A_s$ by (1) for~$s$, we
have that for each $t\in D_s$, and each $i<h(e_t)$, there is no $n[s]$-good
tree from~$G[s]$ for $Q_{(t,i)}=\{\beta:\Phi^\beta_t(e_t)\downarrow=i\}$.
Hence by Lemma~\ref{g-basic}, there is no $n[s+1]$-good tree for
$Q=\{\beta:\Phi^\beta(\epsilon[s])\downarrow\}$ from~$G[s]$.

\noindent\emph{(2) implies (3):}

Since $V$ is $n[s+1]$-good, by Lemma~\ref{induct-basic} there is an element
$\beta$ of~$V$ from which there is no $n[s+1]$-good tree for~$Q$, and hence
not for any $Q_{(t,i)}$ since $Q_{(t,i)}\subseteq Q$. Moreover, $\beta$
properly extends~$G[s]$, since $V$ is an antichain and is not the singleton
of~$G[s]$. Hence by definition of~$A_{s+1}$, this element $\beta$ belongs to
$A_{s+1}$.

\noindent\emph{(3) for~$s$ implies (1) for~$s+1$:}

If Case 1 holds, let $T$ be the tree found by~$\Phi_e$, i.~e., $T$ is
$n[s+1]$-good from~$G[s]$ (for~$Q_{(s,i)}$ for some~$i$). If $T$ is not just
the singleton of~$G[s]$, and Case 1 holds, then apply (3) for~$s$ to~$T$.

If $T$ is just the singleton of~$G[s]$ or if Case 2 holds, then
apply (3) for~$s$ to any $n[s+1]$-good non-singleton tree
from~$G[s]$. For example, this could be the set of immediate
extensions~$G[s]*k$, $k<n[s+1]$.
\end{proof}

\begin{lem}\label{diverge-basic} For any $s\ge 0$, if $s\in D_{s+1}$ then
$\Phi_s^G(e_s)\uparrow$ or $\Phi_s^G(e_s)\ge h(e_s)$.
\end{lem}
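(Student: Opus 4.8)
=== PROOF PLAN ===

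\noindent\textbf{The plan.} The statement to prove is Lemma~\ref{diverge-basic}: for any $s\ge 0$, if $s\in D_{s+1}$ then $\Phi_s^G(e_s)\uparrow$ or $\Phi_s^G(e_s)\ge h(e_s)$. First I would observe that $s\in D_{s+1}$ means Case~2 occurred at stage $s+1$, so by the Construction $e_s$ is defined and equals the recursion-theoretic fixed point $e$ produced at that stage, and $s\in D_t$ for all $t\ge s+1$ (since in Case~1 we set $D_{t+1}=D_t$ and in Case~2 we only add elements). The goal is to rule out the possibility that $\Phi_s^G(e_s)\downarrow=i$ for some $i<h(e_s)$.

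\noindent\textbf{Key steps.} Suppose for contradiction that $\Phi_s^G(e_s)\downarrow=i$ with $i<h(e_s)$. Since $G=\Union_t G[t]$ and the computation uses only finitely much of the oracle, there is a stage $t>s$ with $\Phi_s^{G[t]}(e_s)\downarrow=i$, hence $G[t]\in Q_{(s,i)}=\{\sigma:\Phi_s^\sigma(e_s)\downarrow=i\}$. I would then want to contradict the defining property of $A_{t}$ (for an appropriate $t$): by Lemma~\ref{accept-basic}(1) we have $G[t']\in A_{t'}$ for every $t'$, and since $s\in D_{t'}$ for $t'\ge s+1$, the definition of $A_{t'}$ says there is no $n[t']$-good tree from $G[t'-1]$ for $Q_{(s,i)}$ for any $i<h(e_s)$. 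The point is that once $G[t]\in Q_{(s,i)}$ and $Q_{(s,i)}$ is upward-closed under the substring relation on the relevant initial segments — more precisely, any $\sigma\supseteq G[t]$ still satisfies $\Phi_s^\sigma(e_s)\downarrow=i$ by use-monotonicity — the singleton $\{G[t']\}$ for suitable $t'\ge t$, or rather a one-point tree sitting above $G[t]$, would itself be (trivially) an $n$-good tree from some string for $Q_{(s,i)}$. I need to be a little careful: a singleton $\{G[t']\}$ is $n$-good from $G[t']$ itself vacuously (there is no $\tau$ with $G[t']\subseteq\tau\subset\rho$ for $\rho\in T$), so it is $n[t'+1]$-good from $G[t']$ for $Q_{(s,i)}$ as soon as $G[t']\in Q_{(s,i)}$, which holds for all $t'\ge t$ since $G[t']\supseteq G[t]$. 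But $A_{t'+1}$ forbids exactly such a tree from $G[t']$ — wait, it forbids trees from $\tau\in A_{t'+1}$, i.e., from the candidate strings, not from $G[t']$ directly. So instead the cleaner route: at stage $t+1$, whatever case occurs, $G[t+1]$ is chosen in $A_{t+1}$, and by the definition of $A_{t+1}$ there is no $n[t+1]$-good tree from $G[t+1]$ for $Q_{(s,i)}$; but the singleton $\{G[t+1]\}$ — no, that is from $G[t+1]$ and is good, contradiction only if the singleton counts. I would resolve this by taking $t$ large enough that the computation converges already at $G[t]$, then noting $G[t+1]\in A_{t+1}$ and the one-element tree $\{G[t+1]\}$ is $n[t+1]$-good from $G[t+1]$ for $Q_{(s,i)}$, directly contradicting the defining clause of $A_{t+1}$.

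\noindent\textbf{Main obstacle.} The delicate point is the interplay between ``good tree from $\tau$'' and the fact that the definition of $A_{s+1}$ quantifies over $T$ good from the candidate string $\tau$, combined with verifying that a singleton tree does count as $n$-good (it does, vacuously, by Definition~\ref{tjo-basic}, since condition~(2) is then vacuous) and that $Q_{(s,i)}$ membership is preserved upward along the $G[t]$'s by monotonicity of Turing computations in the oracle. Once those two observations are in place, the contradiction with $G[t+1]\in A_{t+1}$ (Lemma~\ref{accept-basic}(1)) is immediate, since $s\in D_{t+1}$ and $i<h(e_s)$, yet $\{G[t+1]\}$ is an $n[t+1]$-good tree from $G[t+1]$ for $Q_{(s,i)}$. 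I expect the write-up to be short, with the only real care needed in pinning down that the singleton is legitimately good and that one may pass to a stage where the use is already enclosed in $G[t]$.
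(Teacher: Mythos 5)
Your proposal is correct and lands on the same argument as the paper: pick a stage where $\Phi_s^{G[t]}(e_s)\downarrow=i<h(e_s)$, observe that the singleton tree $\{G[t]\}$ is vacuously $n[t]$-good from $G[t]$ for $Q_{(s,i)}$, and contradict $G[t]\in A_t$ from Lemma~\ref{accept-basic}(1). The paper applies this directly at $G[t]$ (implicitly taking $t\ge s+1$ so that $s\in D_t$, by monotonicity of the $D$'s and of Turing computations in the oracle), whereas you phrase it one stage later at $G[t+1]\in A_{t+1}$; both are the same idea, and your worry about whether a singleton ``counts'' as $n$-good is resolved exactly as you note, by vacuity of clause (2) in Definition~\ref{tjo-basic}.
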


\begin{proof}
Otherwise for some $t\in\omega$, $\Phi_s^{G[t]}(e_s)\downarrow<h(e_s)$. Since
the singleton tree $T=\{G[t]\}$ is $n$-good from~$G[t]$ for all~$n$, hence in
particular $n[t]$-good, this contradicts the fact that by Lemma
\ref{accept-basic}(1), $G[t]\in A_t$.
\end{proof}

\begin{lem}\label{nogood-basic}
There is no $2$-good tree for $\{\beta:\Phi_0^\beta\downarrow\}$ from~$G[0]$.
\end{lem}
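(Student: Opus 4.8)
The plan is to argue by contradiction. Suppose $T$ is a $2$-good tree from $G[0]=\emptyset$ which is moreover a subset of $P:=\{\beta:\Phi_0^\beta\downarrow\}$. The idea is to extract from $T$ a single string $\rho\in T$ that is a \textsf{DNR} string; since, by Definition~\ref{phizero-basic}, $\Phi_0^\rho\downarrow$ holds only if $\rho(x)=\Phi_x(x)$ for some $x$ in the domain of $\rho$, a \textsf{DNR} string cannot belong to $P$, contradicting $\rho\in T\subseteq P$.

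To produce $\rho$, I would walk upward through $T$ one level at a time. Set $\sigma_0=\emptyset$. Inductively, suppose $\sigma_j$ has been defined and is a proper substring of some element of $T$. Clause~(2) in the definition of $2$-goodness (Definition~\ref{tjo-basic}), applied with $\sigma=\emptyset$ and $\tau=\sigma_j$, provides at least two immediate successors of $\sigma_j$ that are substrings of elements of $T$; since these have distinct last values, at most one of them has last value equal to $\Phi_{|\sigma_j|}(|\sigma_j|)$ (and none does if that computation diverges), so I may choose $\sigma_{j+1}=\sigma_j*k_j$ with $k_j\ne\Phi_{|\sigma_j|}(|\sigma_j|)$ and with $\sigma_{j+1}$ still a substring of an element of $T$. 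Because $T$ is finite, the lengths $|\sigma_j|=j$ are bounded, so the process halts at some $\sigma_m=:\rho$ which is a substring of an element of $T$ but not a proper substring of any element of $T$. Since a proper substring of an element of $T$ would contradict this, $\rho$ must itself lie in $T$. By construction $\rho(x)=k_x\ne\Phi_x(x)$ for every $x<|\rho|$, so $\rho$ is a \textsf{DNR} string, giving the promised contradiction with $T\subseteq P$.

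I do not anticipate a genuine obstacle here: the whole content of the lemma is that $2$-fold branching at every level is exactly what one needs to diagonalize past $\Phi_x(x)$ level by level, while $\Phi_0$ was designed precisely to detect failure of the \textsf{DNR} property. The only points needing a touch of care are that the upward walk really terminates at an element of $T$ rather than getting stuck at an internal node — which is precisely what clause~(2) prevents, as an internal node always admits a usable successor — and the degenerate case $m=0$, i.e.\ $T=\{\emptyset\}$, in which $\rho=\emptyset$ is vacuously \textsf{DNR} and $\Phi_0^\emptyset\uparrow$, so that the same contradiction applies.
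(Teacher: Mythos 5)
Your proof is correct and follows essentially the same route as the paper's: at any \textsf{DNR} string, at least one of two distinct immediate successors remains \textsf{DNR}, and since $\Phi_0^\beta\downarrow$ exactly when $\beta$ is not a \textsf{DNR} string, no $2$-good tree from the (\textsf{DNR}) empty string can lie entirely in $\{\beta:\Phi_0^\beta\downarrow\}$. The only difference is that you make explicit the upward walk through the (finite) tree and its termination at an element of $T$, which the paper leaves implicit.
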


\begin{proof}
Suppose a string~$\alpha$ is \textsf{DNR} and $k_1\ne k_2$ are integers. Let
$x=|\alpha|$ (so $x$ is the first input on which $\alpha$ is undefined). It
may or may not be the case that $\varphi_x(x)\downarrow$. In any case, it
cannot be that $k_1=\varphi_x(x)=k_2$. Hence at least one among $\alpha*k_1$
and $\alpha*k_2$ is \textsf{DNR}. This shows that there is no 2-good tree
from~$\alpha$ for the set of non-\textsf{DNR} strings. By Definition
\ref{phizero-basic}, $\Phi_0^\beta\downarrow$ iff $\beta$ is not a
\textsf{DNR} string. As $G[0]=\emptyset$ (the empty string) is a \textsf{DNR}
string, the lemma follows.
\end{proof}

\begin{lem}\label{dee-basic}
$0\in D_1$.
\end{lem}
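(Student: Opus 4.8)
The plan is to show that at stage~$1$, Case~2 applies, i.~e., $\Phi_e(e)\uparrow$ for the fixed point $e$ of $f=f_{G[0],0,n[1]}$ produced by the Recursion Theorem, so that $0=s$ gets added to $D_1$ by the construction. Recall that $G[0]=\emptyset$, $\epsilon[0]=\emptyset$, $n[0]=2$, and $D_0=\emptyset$, so that $n[1]=g(n[0],\epsilon[0])=g(2,\emptyset)=2^0\cdot 2=2$ since the exponent $a$ in Definition~\ref{gvec-basic} is an empty sum. Thus $f=f_{\emptyset,0,2}$. By the Recursion Theorem, $\Phi_e=\Phi_{f(e)}$.

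First I would unwind Definition~\ref{f-basic} for this instance: $\Phi_e(e)=\Phi_{f(e)}(e)\downarrow$ exactly if there is some stage $t$ at which a finite tree $T$ and a number $i<h(e)$ are found such that $T$ is $2$-good from $\emptyset$ for the set $\{\beta:\Phi_0^\beta(e)=i\}$. But by Definition~\ref{phizero-basic}, $\Phi_0^\beta$ is independent of its input, $\Phi_0^\beta\downarrow\iff\beta$ is not a \textsf{DNR} string, and when it converges its value is $0$. Hence the only index $i$ for which $\{\beta:\Phi_0^\beta(e)=i\}$ is nonempty is $i=0$, and that set is precisely the set of non-\textsf{DNR} strings (extending $\emptyset$). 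So $\Phi_e(e)\downarrow$ would require a finite $2$-good tree from $\emptyset$ for the set of non-\textsf{DNR} strings.

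Next I would invoke Lemma~\ref{nogood-basic}, which states exactly that there is no $2$-good tree for $\{\beta:\Phi_0^\beta\downarrow\}$ from $G[0]=\emptyset$ — equivalently, no $2$-good tree from $\emptyset$ for the set of non-\textsf{DNR} strings. Therefore no such $T$ is ever found, $\Phi_e(e)\uparrow$, and Case~2 of the construction at stage~$1$ applies (with $s=0$). By the prescription in Case~2, $D_1=D_0\cup\{0\}=\{0\}$, so $0\in D_1$, as desired.

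I do not anticipate a serious obstacle here: the argument is essentially a matter of tracing the definitions at the initial stage and applying Lemma~\ref{nogood-basic}. The one point requiring a little care is making sure the computation of $n[1]$ is correct — that the sum defining $a$ in Definition~\ref{gvec-basic} is empty because $\epsilon[0]=\emptyset$, so $n[1]=2$ and the "$2$" in Lemma~\ref{nogood-basic} matches the goodness parameter used to define $f_{G[0],0,n[1]}$. One should also note that the index $e$ from the Recursion Theorem is a legitimate input to $h$ and to $\Phi_0$, but since $\Phi_0$ ignores its input this causes no difficulty.
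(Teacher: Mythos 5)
Your proof is correct and follows essentially the same route as the paper: compute $n[1]=2$, observe that $\Phi_e(e)\downarrow$ would require a $2$-good tree from $G[0]=\emptyset$ for $\{\beta:\Phi_0^\beta\downarrow=i\}$ for some $i<h(e)$, and rule this out by Lemma~\ref{nogood-basic}, so Case~2 puts $0$ into $D_1$. The paper phrases this more tersely via the inclusion $\{\beta:\Phi_0^\beta\downarrow=i\}\subseteq\{\beta:\Phi_0^\beta\downarrow\}$ rather than pinning the value to $i=0$, but the argument is the same.
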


\begin{proof}
By definition of~$D_1$, it suffices to show that at stage~$1$ of the
Construction, there is no $n[1]$-good tree from~$G[0]$ for
$\{\beta:\Phi^\beta_0\downarrow=i\}$ for any $i<h(e)$. As
$\{\beta:\Phi^\beta_0\downarrow=i\}\subseteq\{\beta:\Phi^\beta_0\downarrow\}$
and $n[1]=2$, this is immediate from Lemma~\ref{nogood-basic}.
\end{proof}

\begin{lem}\label{total-basic}
$G$ is a total function, i.e., $G\in\omega^\omega$.
\end{lem}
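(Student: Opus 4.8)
The plan is to deduce totality of $G$ from the single observation that the lengths $|G[s]|$ are strictly increasing in~$s$. Since $G=\Union_{s\in\omega}G[s]$ and (as checked below) the strings $G[s]$ form a $\subseteq$-chain, strict growth of the lengths forces $\on{dom}(G)=\Union_s\on{dom}(G[s])=\omega$, which is exactly the assertion $G\in\omega^\omega$.

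So the two things to verify are that the $G[s]$ form a chain and that each successor is a \emph{proper} extension. For the chain: in Case~1 of the Construction (Definition~\ref{con-basic}), $G[s+1]$ is by definition an extension of~$G[s]$, while in Case~2, $G[s+1]$ is chosen to be an element of~$A_{s+1}$, and every member of~$A_{s+1}$ properly extends~$G[s]$ by the definition of that set; so in both cases $G[s]\subseteq G[s+1]$. For the properness, the point is that in \emph{both} cases $G[s+1]$ ends up in~$A_{s+1}$ (this is part of the instruction in Case~1 as well), and membership in~$A_{s+1}$ entails properly extending~$G[s]$. Here I would invoke Lemma~\ref{accept-basic}(1), which guarantees that the Construction is well-defined at every stage — so that such a $G[s+1]\in A_{s+1}$ (lying inside the relevant tree~$T$ in Case~1) genuinely exists — and which moreover asserts $G[s]\in A_s$ for all~$s$.

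Putting these together: $|G[0]|=|\emptyset|=0$ and $|G[s+1]|>|G[s]|$ for every~$s$, so by induction $|G[s]|\ge s$; hence $\lim_s|G[s]|=\infty$ and $G$ is total. I do not expect any real obstacle here: the genuinely nontrivial input — that a suitable acceptable extension is always available — has already been established in Lemma~\ref{accept-basic}(1), and the strict increase in length is then immediate from the fact that each set $A_{s+1}$ was defined so as to contain only proper extensions of the previously chosen string~$G[s]$.
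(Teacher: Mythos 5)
Your argument is correct and is essentially the paper's own proof: both rest on the fact that $G[s+1]\in A_{s+1}$ at every stage (guaranteed by Lemma~\ref{accept-basic}) and that, by the definition of $A_{s+1}$, every element of $A_{s+1}$ properly extends $G[s]$, so the lengths $|G[s]|$ tend to infinity and $G$ is total.
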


\begin{proof}
By Lemma~\ref{accept-basic}(3), $G[s+1]\in A_{s+1}$ for each $s\ge 0$, and
hence by definition of~$A_{s+1}$, $G[s+1]$ is a proper extension of~$G[s]$.
From this the lemma immediately follows.
\end{proof}

\begin{lem}\label{DNR-basic}
$G$ is \textsf{DNR}.
\end{lem}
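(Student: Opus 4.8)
The plan is to exploit the way $\Phi_0$ was tailored in Definition~\ref{phizero-basic}: for any $\beta$ we have $\Phi_0^\beta\downarrow$ if and only if $\beta$ fails to be a \textsf{DNR} string, and when $\Phi_0^\beta$ converges its value is $0$ (and is independent of the formal argument). Since $G$ is total by Lemma~\ref{total-basic}, it therefore suffices to prove that $\Phi_0^G\uparrow$: by the equivalence of Definition~\ref{phizero-basic} this says exactly that there is no $x$ with $G(x)=\Phi_x(x)$, which is precisely the assertion that $G$ is \textsf{DNR}.

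First I would observe that stage~$1$ of the Construction is a Case~2 stage. Indeed, by Lemma~\ref{dee-basic} we have $0\in D_1$, while $D_0=\emptyset$ and $D_{s+1}$ is obtained from $D_s$ either by leaving it unchanged (Case~1) or by adjoining $s$ (Case~2); so $0\in D_1\setminus D_0$ forces Case~2 at stage~$1$. In particular $\epsilon[1]=\{(0,e)\}$ for the fixed point $e$ produced there, so $e_0=\epsilon(0)$ is defined. Now I would apply Lemma~\ref{diverge-basic} with $s=0$, obtaining $\Phi_0^G(e_0)\uparrow$ or $\Phi_0^G(e_0)\ge h(e_0)$.

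The second alternative is impossible: if $\Phi_0^G(e_0)\downarrow$ then by Definition~\ref{phizero-basic} its value is $0$, whereas $h(e_0)\ge 1$ since $h$ is everywhere positive. (If $h$ vanished somewhere then no function would be $h$-\textsf{DNR} and Theorem~\ref{theone-basic} would be vacuous, so we may assume $h(n)\ge 1$ throughout.) Hence $\Phi_0^G(e_0)\uparrow$, and since the computation $\Phi_0^G$ does not depend on its argument we get $\Phi_0^G\uparrow$, which finishes the proof by the first paragraph.

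I do not expect any genuine obstacle: this lemma is a short corollary of the divergence bookkeeping (Lemmas~\ref{dee-basic} and~\ref{diverge-basic}) together with the design of $\Phi_0$. The only point needing a moment's care is confirming that $e_0$ is actually defined, i.e.\ that stage~$1$ falls under Case~2 --- which is exactly what the observation about $0\in D_1$ and the monotonicity of the sequence $(D_s)_{s\in\omega}$ provides.
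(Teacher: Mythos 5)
Your proof is correct and follows essentially the same route as the paper: it combines Lemma~\ref{dee-basic} ($0\in D_1$) with Lemma~\ref{diverge-basic}, rules out the alternative $\Phi_0^G\ge h(e_0)$ using the positivity of $h$ and the fact that $\Phi_0$ only outputs $0$, and then reads off \textsf{DNR}-ness of $G$ from Definition~\ref{phizero-basic}. Your extra observation that $0\in D_1$ forces Case~2 at stage~$1$ (so that $e_0$ is defined) is a harmless elaboration of what the paper leaves implicit.
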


\begin{proof}
By Lemmas~\ref{diverge-basic} and~\ref{dee-basic}, we have that either
$\Phi^G_0\uparrow$ or $\Phi^G_0\ge h(e_0)$. By Definition~\ref{firstdef},
$h(n)>0$ for all~$n$, whereas by Definition~\ref{phizero-basic},
$\Phi^G_0\downarrow=i$ implies $i=0$. Hence the only possibility is that
$\Phi^G_0\uparrow$. By Definition~\ref{phizero-basic}, this means that $G$ is
\textsf{DNR}.
\end{proof}

\begin{lem}
$G$ computes no $h$-\textsf{DNR} function.
\end{lem}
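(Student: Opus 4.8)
The plan is to show that for every Turing functional $\Phi$, the function $\Phi^G$ fails to be $h$-\textsf{DNR}, so that no $h$-\textsf{DNR} function is recursive in $G$. Fix a functional $\Phi$. By the construction, $\Phi$ is (essentially) $\Phi_t$ for some index $t$, and at stage $t+1$ the construction consults the fixed point $e$ of $f_{G[t],t,n[t+1]}$ and splits into Case 1 and Case 2. The key dichotomy is: either at some stage the relevant diagonalization computation $\Phi_e(e)$ converges (Case 1), or it does not (Case 2), and in Case 2 the index $t$ (more precisely, the relevant index $s$ with $e_s=e$) is put into $D_{s+1}$, so by Lemma~\ref{diverge-basic} we get $\Phi^G_s(e_s)\uparrow$ or $\Phi^G_s(e_s)\ge h(e_s)$. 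The first task is to observe that for \emph{every} functional $\Phi$ there is a stage $s$ at which $\Phi$ (as $\Phi_s$) is processed; this follows because the list $\Phi_n$ enumerates all functionals and the construction processes index $s$ at stage $s+1$.

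First I would handle Case 1 at the stage $s+1$ where $\Phi=\Phi_s$ is considered. In Case 1, $\Phi_e(e)\downarrow$, so by the definition of $f=f_{G[s],s,n[s+1]}$ (Definition~\ref{f-basic}) a finite $n[s+1]$-good tree $T$ from $G[s]$ for $\{\beta:\Phi_s^\beta(e)=i\}$ is found for some $i<h(e)$, and the construction sets $G[s+1]\in T$. I would then argue, using that $G$ is built to pass through $T$ and that $G$ is a total function extending $G[s+1]$ (Lemma~\ref{total-basic}), that $\Phi_s^G(e)\downarrow=i<h(e)$. Hence $\Phi_s^G$ is defined on input $e$ with value $i<h(e)$, but this value equals $\Phi_e(e)$, and by the Recursion Theorem $\Phi_e=\Phi_{f(e)}$; tracing the definition of $f$ we get $\Phi_s^G(e)=\Phi_e(e)$, so $\Phi_s^G(e)=\Phi_e(e)$ witnesses that $\Phi_s^G$ is \emph{not} diagonally nonrecursive at $e$. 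Thus $\Phi_s^G$ is not $h$-\textsf{DNR}.

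In Case 2, $\Phi_e(e)\uparrow$, so $s\in D_{s+1}$ and $e_s=e$. By Lemma~\ref{diverge-basic}, $\Phi_s^G(e_s)\uparrow$ or $\Phi_s^G(e_s)\ge h(e_s)$. In either subcase, $\Phi_s^G$ fails to be $h$-\textsf{DNR}: if it diverges at $e_s$ then it is not total, and if $\Phi_s^G(e_s)\ge h(e_s)$ then the bound $H(n)<h(n)$ from Definition~\ref{firstdef} is violated at $n=e_s$. Combining the two cases, for every functional $\Phi$ the function $\Phi^G$ is not $h$-\textsf{DNR}, which is exactly the statement of the lemma.

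The main obstacle I expect is the bookkeeping in Case 1 showing that the tree $T$ found at stage $s+1$ is genuinely respected by the final $G$ — i.e., that $G$ actually extends an element of $T$ and therefore inherits the convergent computation $\Phi_s^\beta(e)=i$ — and then matching this $i$ with $\Phi_e(e)$ through the Recursion Theorem identification $\Phi_e=\Phi_{f(e)}$ and the definition of $f_{G[s],s,n[s+1]}$. This requires carefully unwinding Definition~\ref{f-basic}: the computation $\Phi_{f(e)}(e)$ converges precisely because such a $T$ is found, and its value is the $i$ for the first such tree, which is the same $i$ used to select $G[s+1]\in T$. Once that identification is pinned down, the rest is routine.
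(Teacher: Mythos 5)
Your proposal is correct and takes essentially the same route as the paper: in Case 1, $G[s+1]\in T$ forces $\Phi_s^G(e)=\Phi_e(e)\downarrow$ with $i<h(e)$, and in Case 2, Lemma~\ref{diverge-basic} gives $\Phi_s^G(e_s)\uparrow$ or $\Phi_s^G(e_s)\ge h(e_s)$, so each $\Phi_s^G$ fails to be $h$-\textsf{DNR}. The one point you wave at with ``essentially'' is made explicit in the paper: since every Turing functional has infinitely many indices, an arbitrary functional still occurs as some $\Phi_s$ with $s\ge 1$, so replacing $\Phi_0$ in Definition~\ref{phizero-basic} costs nothing.
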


\begin{proof}
Since each Turing functional has infinitely many indices, it suffices to show
that for each~$s$, $\Phi_s^G$ is not $h$-\textsf{DNR} where $\Phi_s$ is as in
Definition~\ref{phizero-basic}. That is, the fact that we defined our own
$\Phi_0$ is not a problem.

If Case 1 of the construction is followed then
$\Phi^G_s(e)=\Phi^{G[s+1]}(e)=\Phi_e(e)$ because $G[s+1]\in T$. So $\Phi_s^G$
is not $h$-\textsf{DNR}. If Case 2 of the construction is followed then $s\in
D_{s+1}$ and so $\Phi_s^G(e)\uparrow$ or $\Phi_s^G(e)\ge h(e)$ by Lemma
\ref{diverge-basic}. Hence $\Phi_s^G$ is not $h$-\textsf{DNR}.
\end{proof}

\section{The main theorem}\label{proof}

In this section we prove Theorem~\ref{one}, which we restate here.

\begin{thm}\label{theone} For any recursive function $h:\omega\to\omega$,
there exists $G:\omega\to\omega$ (where $G=\oplus_{i\in\omega}G_i$) which is
relatively \textsf{DNR}, and such that for each Turing functional~$\Phi$ and
each $i\in\omega$, $\Phi^{G_0\oplus\cdots\oplus G_i}$ is not an
$h$-\textsf{DNR} function.
\end{thm}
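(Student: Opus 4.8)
The plan is to adapt the Warm-up construction from Section~\ref{proof-basic} to the setting where we build infinitely many columns $G_i$ simultaneously, satisfying a relativized diagonalization requirement at each even column. The key new feature is that we must handle a dovetailed list of requirements: for each pair $(\Phi, i)$ we must ensure $\Phi^{G_0 \oplus \cdots \oplus G_i}$ is not $h$-\textsf{DNR}, and for each pair $(x, y)$ we must ensure $G_{2y}(x) \ne \Phi_x^{G_0 \oplus \cdots \oplus G_{2y-1}}(x)$. First I would set up a priority ordering of all these requirements, and at each stage commit to specifying one more value of one more column while preserving the good-tree machinery (Definitions~\ref{tjo-basic}--\ref{f-basic} and Lemmas~\ref{verygoodisgood-basic}--\ref{induct-basic}) within each column independently.

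The core idea carried over from the warm-up is that maintaining divergence of a batch of computations $\Phi_t^G(e_t)$ costs us a loss in tree-goodness governed by $g(n, \epsilon) = 2^a n$ (Definition~\ref{gvec-basic}), and that finitely many such demands can always be met because each stage only adds finitely many constraints while every $n$-good tree remains nonempty (the analogue of Lemma~\ref{accept-basic}). The relativization to $G_0 \oplus \cdots \oplus G_{2y-1}$ as oracle is the substantive change: instead of a single good tree in $\omega^{<\omega}$, I would work with a product of good trees, one per column, and observe that when an oracle query reaches into column $2y$ itself we get the diagonalization for free (as in the proof of Lemma~\ref{nogood-basic}: a \textsf{DNR} extension of a node always exists because $\varphi_x(x)$ is a single value and each node has $\ge 2$ good immediate successors). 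The ``relatively \textsf{DNR}'' condition is then exactly the statement that no computation $\Phi_x^{G_0 \oplus \cdots \oplus G_{2y-1}}(x)$ succeeds in hitting $G_{2y}(x)$, which is the direct analogue of $G$ being \textsf{DNR} in Theorem~\ref{theone-basic}, now enforced at each even column via the same fixed-point-via-Recursion-Theorem trick applied to the function $f_{\alpha,c,n}$ of Definition~\ref{f-basic}.

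The main obstacle I expect is bookkeeping the interaction between columns: the oracle $G_0 \oplus \cdots \oplus G_i$ mixes all columns up to $i$, so a computation we want to keep divergent may query nodes of several column-trees at once, and the goodness loss must be apportioned carefully so that no single column is asked to degrade below $2$-goodness (cf.\ Lemma~\ref{atleasttwo-basic}). I would handle this by having the stage-$s+1$ goodness parameter $n[s+1]$ be a single global number $g(n[s], \epsilon[s])$ that dominates the requirements across all finitely many active columns simultaneously, and by arranging that at stage $s$ only columns $\le s$ have been touched, so at every stage only finitely many computations are in the ``protected'' set $D_{s+1}$. The verification lemmas then go through as in Section~\ref{proof-basic}, mutatis mutandis: an analogue of Lemma~\ref{accept-basic} shows the construction is well-defined and each $G_i$ is total, an analogue of Lemma~\ref{diverge-basic} shows protected computations stay divergent or exceed $h$, and the fixed-point argument shows each $\Phi^{G_0\oplus\cdots\oplus G_i}$ falls into one of the two cases and hence fails to be $h$-\textsf{DNR}, while the even-column diagonalizations give relative \textsf{DNR}-ness.

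Finally, I would note that the same construction, read off at the even columns alone, simultaneously yields Theorems~\ref{wontprove} and~\ref{wontproveeither} as remarked after Theorem~\ref{DNRWWKL}: taking $G := G_0 \oplus G_2 \oplus \cdots$ gives a \textsf{DNR} function computing no $h$-\textsf{DNR} function for the fixed $h$, and a single stage of the unrelativized version, with explicit tracking of the goodness blow-up, gives the elementary-recursive bound on $h^*$ in Theorem~\ref{wontproveeither}.
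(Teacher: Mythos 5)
Your overall architecture matches the paper's: keep the warm-up's machinery (protected divergent computations $D_{s+1}$, the goodness-loss function $g$, the Recursion Theorem fixed point for $f$), introduce special functionals for the even columns so that ``relatively \textsf{DNR}'' reduces to forcing $\Phi_{2y}^G\uparrow$, and handle requirement $s$ at stage $s+1$ (no priority ordering or injury is actually needed). But there is a genuine gap at the one place where the multi-column case is harder than the warm-up, and your proposal explicitly takes the wrong turn there: you say you would work with ``a product of good trees, one per column,'' handled ``within each column independently,'' with a single global goodness parameter, and that the verification lemmas then go through ``mutatis mutandis.'' They do not. Because the sets to be avoided, $Q_{(t,i)}=\{\vec\beta:\Phi_t^{\vec\beta}(e_t)=i\}$, are defined from the \emph{join} $G_0\oplus\cdots\oplus G_t$, the columns are coupled: whether an extension in column $x$ creates a convergent computation depends on what was chosen in columns $<x$. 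Consequently the class of rectangular products of good trees is not closed under the extraction steps you need (the majority-vote argument of Lemma~\ref{kum-basic}, the pigeonhole of Lemma~\ref{notsharp-basic}, and above all the extension step of Lemma~\ref{induct-basic}): the good subobject you get for $Q_{(t,i)}$ over one choice of node in the earlier columns is in general different from the one over another choice, so it is not a product. This is exactly why the paper replaces products by \emph{systems} of trees (Definition~\ref{tjo}), in which the tree $T_{x}(\vec\sigma_{x-1})$ in column $x$ depends on the nodes $\vec\sigma_{x-1}$ already chosen, and keeps a \emph{vector} of goodness parameters $\vec n[s]$ rather than one global number.

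The step your plan cannot deliver as stated is the analogue of Lemma~\ref{induct-basic}, i.e.\ the paper's Lemma~\ref{induct}: given that there is no $\vec n_x$-good system from $\vec\alpha_x$ for an open $P$, and given an $\vec n_x$-good system $\vec V_x$ from $\vec\alpha_x$, find $\vec\beta_x$ extending (componentwise) an element of $\vec V_x$ from which there is still no $\vec n_x$-good system for $P$. In the warm-up this is a one-line union-of-counterexamples argument; in the multi-column setting that union argument fails (counterexample systems over different nodes of $\vec V$ do not patch together into a good system, precisely because of the dependence on earlier columns), and the paper needs the new notion of $(\vec n_{x-1},n_x/m)$-goodness (Definition~\ref{claim-basic} extended in Definition~\ref{tjo}), the combinatorial Lemma~\ref{claim}, and a delicate finite iteration $\vec\omega_x[0],\ldots,\vec\omega_x[p]$ walking up through $V_x$ while preserving membership in $Q$. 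Without this (or some substitute for it), the implication ``(2) implies (3)'' in the analogue of Lemma~\ref{accept-basic} --- which is what guarantees $A_{s+1}$ is nonempty and the construction can continue --- has no proof. So the proposal captures the right framework but omits the central new idea of the theorem; the claim that the verification is routine is where it breaks down.
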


To satisfy the requirement that $G$ be relatively \textsf{DNR}, it will be
convenient to use the following definition.

\begin{df}[Section~\ref{proof} only]\label{phizero}
Let $\Phi_z$, $z\in\omega$ be a sequence of Turing functionals satisfying the
following conditions:

\noindent(1) For all~$z$, $\Phi_z$ queries its oracle on no column
other than columns $0,\ldots,z$. So $\Phi_z^G=\Phi^{G_0\oplus\cdots\oplus
G_z}_z$ for all $G:\omega\to\omega$.

\noindent (2) For all $y\in\omega$,
$\Phi_{2y}^G\downarrow\iff\exists x.G_{2y}(x)=\Phi_x^{G_0\oplus\cdots\oplus
G_{2y-1}}(x)$, and if $\Phi_{2y}^G\downarrow=i\in\omega$ then $i=0$. All
other Turing functionals belong to the set $\{\Phi_{2y+1}:y\in\omega\}$.
\end{df}

In Definition~\ref{phizero}, we note that when $y=0$, $G_0\oplus\cdots\oplus
G_{2y-1}$ equals $\emptyset$. Also $\Phi^G_{2y}$ only queries~$G$ on columns
$0,\ldots,2y$, so (2) is in compliance with (1).

We need the following extension of Definition~\ref{tjo-basic}.

\begin{df}[Good systems of trees]\label{tjo}
Given strings $\sigma_n\in\omega^{<\omega}$, $n\in\omega$, we
define $\sigma=\oplus_{n\in\omega}\sigma_n$ by
$\sigma(2^n(2k+1))=\sigma_n(k)$. We write
$\sigma=\sigma_0\oplus\cdots\oplus\sigma_k$ if
$\sigma_n=\emptyset$ for all $n>k$. Let $\Omega=\omega^{<\omega}$,
and let $\Omega^{<\omega}$ be the set
$$\{\sigma_0\oplus\cdots\sigma_k\: k\in\omega\And\forall i\le k\,.\,\sigma_i\in\omega^{<\omega}\}.$$
Note that $\Omega\subseteq\Omega^{<\omega}$. Conversely,
given~$\sigma\in \Omega^{<\omega}$, the equation
$\sigma(2^n(2k+1))=\sigma_n(k)$ defines each~$\sigma_n$. We refer
to the elements of $\Omega^{<\omega}$ as pseudostrings. For
example, $\la 0,1,1,0\ra\oplus\la 1,1,0\ra$ is pictured as being
defined on initial segments of the first two columns of $\omega$
of length~4 and~3, respectively.

Given $\alpha_0,\ldots,\alpha_x\in\Omega$, $x\ge 0$, we use the
shorthand notation $\vec\alpha_x$ for
$(\alpha_0,\ldots,\alpha_x)$. Similarly for other mathematical
objects: so for example if $n_0,\ldots,n_x$ are integers we
abbreviate $(n_0,\ldots,n_x)$ by~$\vec n_x$. $\vec\alpha_x$ is
also identified with the pseudostring
$\alpha_0\oplus\cdots\oplus\alpha_x$. So given
$\alpha\in\Omega^{<\omega}$, the equation $\alpha=\vec\alpha_x$ is
equivalent to: $\alpha_y=\emptyset$ for all~$y>x$.

If $\vec n_x=(n_0,\ldots,n_x)$ then we can apply operations componentwise,
such as writing $2\vec n_x-1$ for $(2n_0-1,\ldots,2n_x-1)$.

Let $x\ge 0$. A \emph{system of trees} $\vec T=(T_0,\ldots,T_x)=\vec T_x$ is
a tree $T_0$ together with, for each $\sigma_0\in T_0$, a tree
$T_1(\sigma_0)$; and recursively for each $\sigma_k\in
T_k(\vec\sigma_{k-1})$, $0\le k<x$, a tree $T_{k+1}(\vec\sigma_k)$. If
$\sigma_x\in T_x(\vec\sigma_{x-1})$, we say $\vec\sigma_x\in\vec T$. (If
$x=0$, $\vec\sigma_{x-1}$ is the empty sequence and
$T_x(\vec\sigma_{x-1})=T_0$.)

We say that a pseudostring $\beta$ extends a pseudostring $\alpha$
if $\beta(x)=\alpha(x)$ whenever $\alpha(x)$ is defined.

Hence if $\alpha$ and $\beta$ are elements of $\Omega^{x+1}$ for
some $x\ge 0$ then we have a notion of $\beta$ extending $\alpha$.

We call a set $P\subseteq\Omega^{<\omega}$ \emph{open} if for each
$\alpha\in P$ and $\beta$ extending $\alpha$, $\beta\in P$. Given
$x\ge 0$, a subset~$P$ of $\Omega^{x+1}$ is called \emph{open} if
for each $\alpha\in P$ and $\beta$ extending $\alpha$,
$\beta\in\Omega^{x+1}$, we have $\beta\in P$.

Suppose $P$ is a subset of $\Omega^{x+1}$. A system is said to be
a \emph{system for~$P$} if each element of the system is in~$P$.
We write $P(\vec\xi_x)$ to indicate that $\vec\xi_x\in P$; and we
write $P(\vec\xi_{x-1},\cdot)$ for
$\{\xi_x:P(\vec\xi_{x-1},\xi_x)\}$.

A system~$\vec T_x$ is \emph{$\vec n_x$-good from $\vec\sigma_x$} if for each
$\vec\beta_{k-1}\in \vec T_{k-1}$, $0\le k<x$, $T_k(\vec\beta_{k-1})$ is
$n_k$-good from~$\sigma_k$. For~$k=0$ this means that $T_0$ is $n_0$-good
from~$\sigma_0$.

A system $\vec T_x$ is \emph{$(\vec n_{x-1},n_x/m)$-good from~$\vec\sigma_x$}
if

\begin{enumerate}
\item[(1)] $\vec T_{x-1}$ is $\vec n_{x-1}$-good from
$\vec\sigma_{x-1}$, and \item[(2)] for each $\vec\beta_{x-1}\in\vec T_{x-1}$,
$T_x(\vec\beta_{x-1})$ is $n_x/m$-good from~$\sigma_x$ (as in Definition
\ref{claim-basic}).
\end{enumerate}

We say that $\vec\xi_x$ \emph{componentwise extends} $\vec\beta_x$
if for each $0\le y\le x$, $\xi_y$ extends~$\beta_y$; in other
words $\vec\xi_x$ extends $\vec\beta_x$ if we consider them both
as pseudostrings. If $\vec\xi_x$ componentwise extends
$\vec\beta_x$, and $\vec\beta_x$ is an element of a system~$\vec
T_x$, then $\vec\beta_x$ is called the \emph{restriction}
of~$\vec\xi_x$ to~$\vec T_x$. This is well-defined since $\vec
T_x$ is an antichain under the partial order of componentwise
extension.
\end{df}

To prove Theorem~\ref{theone} we will extend the results of Section
\ref{proof-basic} from trees to systems of trees.

\begin{lem}\label{verygoodisgood}
Let $\vec m_x$, $\vec n_x$ be sequences of positive integers such
that $m_i\ge n_i$ for each $0\le i\le x$. Let $\vec T_x$ be a
system of trees. Let $P\subseteq\Omega^{x+1}$, and let
$\vec\sigma_x$ be a sequence of elements of~$\Omega$. If $\vec
T_x$ is $\vec m_x$-good from $\vec\sigma_x$ for~$P$ then $\vec
T_x$ is $\vec n_x$-good from~$\vec\sigma_x$ for~$P$.
\end{lem}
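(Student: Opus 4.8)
The plan is to reduce Lemma~\ref{verygoodisgood} to its single-tree analogue, Lemma~\ref{verygoodisgood-basic}, by unwinding the definition of a $\vec n_x$-good system for~$P$ componentwise. Recall from Definition~\ref{tjo} that a system $\vec T_x$ being $\vec n_x$-good from $\vec\sigma_x$ means precisely that for each $\vec\beta_{k-1}\in\vec T_{k-1}$ with $0\le k\le x$, the tree $T_k(\vec\beta_{k-1})$ is $n_k$-good from~$\sigma_k$; and being $\vec n_x$-good \emph{for~$P$} adds the requirement that each element of the system lies in~$P$. So the statement has two parts: the ``goodness'' part (levelwise $n_k$-goodness of each tree) and the ``for~$P$'' part (containment in~$P$). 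The second part is trivial to transfer: the set of elements of $\vec T_x$ does not depend on the parameters $\vec n_x$ or $\vec m_x$, so if $\vec T_x$ is $\vec m_x$-good for~$P$ then every $\vec\beta_x\in\vec T_x$ already satisfies $P(\vec\beta_x)$, which is exactly what is needed.

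For the first part I would argue as follows. Fix any $k$ with $0\le k\le x$ and any $\vec\beta_{k-1}\in\vec T_{k-1}$ (the empty sequence when $k=0$). By hypothesis $T_k(\vec\beta_{k-1})$ is $m_k$-good from~$\sigma_k$. Since $m_k\ge n_k\ge 1$ by the assumption that both sequences consist of positive integers with $m_i\ge n_i$, Lemma~\ref{verygoodisgood-basic} (with $a=n_k$, $b=m_k$, $T=T_k(\vec\beta_{k-1})$, $P=\Omega^{<\omega}$ or the relevant ambient set, $\sigma=\sigma_k$) yields that $T_k(\vec\beta_{k-1})$ is $n_k$-good from~$\sigma_k$. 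As $k$ and $\vec\beta_{k-1}$ were arbitrary, $\vec T_x$ is $\vec n_x$-good from~$\vec\sigma_x$; combined with the containment observation above, $\vec T_x$ is $\vec n_x$-good from~$\vec\sigma_x$ for~$P$.

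There is essentially no hard part here: the lemma is the system-level bookkeeping counterpart of the one-line Lemma~\ref{verygoodisgood-basic}, and the only thing to be careful about is matching up the indexing in Definition~\ref{tjo} correctly — in particular remembering that the tree at level~$k$ depends on the chosen $\vec\beta_{k-1}\in\vec T_{k-1}$, so the argument must quantify over all such $\vec\beta_{k-1}$, not just over a single branch. One small subtlety worth flagging is that $P$ is a subset of $\Omega^{x+1}$ while the individual trees $T_k(\vec\beta_{k-1})$ live in $\Omega=\omega^{<\omega}$; the ``for~$P$'' condition is a global constraint on the elements $\vec\beta_x=\beta_0\oplus\cdots\oplus\beta_x$ of the system rather than a per-tree constraint, which is why it transfers for free and does not interact with the levelwise application of Lemma~\ref{verygoodisgood-basic}. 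So the proof is genuinely immediate from Definition~\ref{tjo} together with Lemma~\ref{verygoodisgood-basic}, applied once at each level.
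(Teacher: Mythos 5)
Your proof is correct and takes the same route the paper implicitly does: the paper simply asserts that Lemma~\ref{verygoodisgood} is immediate from Definition~\ref{tjo}, and your argument just unpacks that by applying Lemma~\ref{verygoodisgood-basic} levelwise and observing that the ``for~$P$'' clause is independent of the goodness parameters. Your flagged subtleties (quantifying over all $\vec\beta_{k-1}\in\vec T_{k-1}$ rather than a single branch, and the global vs.\ per-tree nature of the ``for~$P$'' condition) are exactly the right things to be careful about, and you handle them correctly.
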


Lemma~\ref{verygoodisgood} is immediate from Definition~\ref{tjo}. The
following is a generalization of Lemma~\ref{kum-basic} to systems of trees.

\begin{lem}\label{kum}
Given $x\ge 0$, a system $\vec T_x$ that is $(2\vec n_x-1)$-good
from some sequence of strings $\vec\sigma_x$, and a subset~$P$ of
$\vec T_x$, there is either an $\vec n_x$-good subset of~$\vec
T_x$ for~$P$ from~$\vec\sigma_x$, or an $\vec n_x$-good subset
of~$\vec T_x$ for the complement of~$P$ from~$\vec\sigma_x$.
\end{lem}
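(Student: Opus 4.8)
The plan is to imitate the proof of Lemma~\ref{kum-basic}, but instead of labeling individual strings of a single tree we label tuples of strings (the elements of the system $\vec T_x$) and then perform the averaging-and-pruning argument tree by tree, working from the top column ($k=x$) down to the bottom column ($k=0$). The key observation is that in a system the tree $T_k(\vec\beta_{k-1})$ sits over a choice of nodes $\vec\beta_{k-1}$ in the earlier columns, so the label assigned to a node of $T_k(\vec\beta_{k-1})$ will in general depend on $\vec\beta_{k-1}$; this is exactly parallel to how the label of a node of a single tree in Lemma~\ref{kum-basic} depends on the subtree hanging above it. So the ``label'' we propagate downward will really be a $\{0,1\}$-value attached to each node of each of the trees in the system.

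First I would run the labeling of Lemma~\ref{kum-basic} on the top trees: for each $\vec\beta_{x-1}\in\vec T_{x-1}$, apply that labeling to the tree $T_x(\vec\beta_{x-1})$ with $P(\vec\beta_{x-1},\cdot)$ as the distinguished set and $n_x$ as the goodness parameter, obtaining a label in $\{0,1\}$ for the root $\sigma_x$ (equivalently, for the pair $\vec\beta_{x-1}$ itself) together with an $n_x$-good subtree of $T_x(\vec\beta_{x-1})$ contained in $P(\vec\beta_{x-1},\cdot)$ when the label is $1$, and in its complement when the label is $0$. This gives a $\{0,1\}$-valued function on $\vec T_{x-1}$, which I now treat as the distinguished set $P'\subseteq\vec T_{x-1}$ (taking $P'$ to be the label-$1$ tuples), and recurse: by the inductive hypothesis applied to the system $\vec T_{x-1}$, which is $(2\vec n_{x-1}-1)$-good from $\vec\sigma_{x-1}$, I obtain an $\vec n_{x-1}$-good subsystem of $\vec T_{x-1}$ lying entirely inside $P'$ or entirely inside its complement. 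Splicing the previously-selected top subtrees onto this subsystem produces an $\vec n_x$-good subsystem of $\vec T_x$, and by construction every element of it is in $P$ (if the bottom recursion landed in $P'$ and the top labels were all $1$) or in the complement of $P$ (if it landed outside $P'$); a short bookkeeping check, or a second application of the same idea, handles the case where the bottom recursion returns the label-$1$ side but the top trees were the label-$0$ subtrees, and conversely — in fact one simply tracks a single global bit $i\in\{0,1\}$ as in Lemma~\ref{kum-basic} and everything is either in $P$ or in its complement according to $i$.

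The main obstacle is the bookkeeping that the goodness degrades correctly at each level and that the ``extends'' relation on pseudostrings is respected when subtrees chosen at level $x$ are reattached over a pruned subsystem at levels $<x$: one must check that restricting $\vec T_{x-1}$ to a subsystem does not disturb the $n_x$-goodness of the already-chosen $T_x(\vec\beta_{x-1})$, which is immediate because those trees were chosen independently for each surviving $\vec\beta_{x-1}$, and that the halving estimate $p\ge k/2\ge n_j$ from $k\ge 2n_j-1$ goes through verbatim in each column. Since Definition~\ref{tjo} defines $\vec n_x$-goodness of a system as $n_k$-goodness of $T_k(\vec\beta_{k-1})$ for every $\vec\beta_{k-1}\in\vec T_{k-1}$, and the construction preserves exactly this condition column by column, the verification is routine once the inductive structure is set up, and the lemma follows by induction on $x$ with base case $x=0$ being precisely Lemma~\ref{kum-basic}.
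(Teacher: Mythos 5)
Your proposal is correct and is essentially the paper's own argument: the paper performs the same column-by-column application of Lemma~\ref{kum-basic}, coloring each $\vec\alpha_{y-1}$ red or blue according to whether an $n_y$-good monochromatic subtree of $T_y(\vec\alpha_{y-1})$ exists, and then extracts the monochromatic subsystem, which is exactly what your induction on $x$ (top column via Lemma~\ref{kum-basic}, then the inductive hypothesis applied to $\vec T_{x-1}$ with the induced set $P'$) unfolds to. The case you worry about (bottom recursion landing in $P'$ while the stored top subtrees are the label-$0$ ones) cannot in fact occur, since each surviving $\vec\beta_{x-1}$ carries the subtree matching its own label, so your ``single global bit'' remark is the right resolution.
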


\begin{proof}
The case $x=0$ is Lemma~\ref{kum-basic}. Suppose $x\ge 1$. All sequences
$\vec\alpha_y$, $0\le y\le x$, in the following proof are assumed to be in
$\vec T_y$. Let $\vec\alpha_{-1}$ denote the empty sequence of strings. Call
the elements $\vec\alpha_x$ that are (not) in~$P$ red (blue). So each
$\vec\alpha_x$ is either red or blue.

Inductively, let $y\le x$, $y\ge 0$. Call $\vec\alpha_{y-1}$ red
(blue) if there is an $n_y$-good tree of~$\alpha_y$
from~$\sigma_y$ such that $\vec\alpha_y$ is red (blue). Each
$\vec\alpha_{y-1}$ is either red or blue by Lemma \ref{kum-basic},
since each $\vec\alpha_y$ is either red or blue.

Hence $\vec\alpha_{-1}$ is either red or blue. Say
$\vec\alpha_{-1}$ is red. Then there is an $\vec n_x$-good system
from~$\vec\sigma_x$ for which $\vec\alpha_x$ is red, namely, the
set of all~$\vec\alpha_x$ such that for each $y\le x$,
$\vec\alpha_y$ is red.
\end{proof}

Lemma~\ref{notsharp-basic} generalizes to Lemma~\ref{notsharp} below by the
same proof.

\begin{lem}\label{notsharp}
Let $a\ge 1$ and let $\vec n$ be a finite sequence of positive integers. Let
$\vec T$ be a system of trees which is $2^{a-1}\vec n$-good from some
$\vec\alpha$, and let $P_1,\ldots,P_a$ be sets of sequences of strings such
that $\vec T\subseteq\Union_i P_i$. Then for some~$i$, $\vec T$ has a subset
which is $\vec n$-good for~$P_i$ from~$\vec\alpha$. \qed
\end{lem}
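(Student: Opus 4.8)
The plan is to mimic the proof of Lemma~\ref{notsharp-basic} verbatim, replacing single trees by systems of trees and scalars by vectors throughout, using Lemma~\ref{kum} in place of Lemma~\ref{kum-basic}. The induction is on $a$. For the base case $a=1$, the hypothesis gives $\vec T\subseteq P_1$, and $\vec T$ itself is the desired $\vec n$-good subsystem for $P_1$ from $\vec\alpha$ (here I use that $2^{a-1}\vec n=\vec n$ when $a=1$, so $\vec T$ is already $\vec n$-good). So assume $a\ge 2$ and that the statement holds with $a-1$ in place of $a$.

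For the inductive step, apply Lemma~\ref{kum} to the system $\vec T$, which is $2^{a-1}\vec n$-good from $\vec\alpha$, hence (using that the doubling needed by Lemma~\ref{kum} is absorbed, since $2^{a-1}\vec n\ge 2\cdot 2^{a-2}\vec n-\vec 1$ componentwise) I get: either there is a $2^{a-2}\vec n$-good subsystem of $\vec T$ from $\vec\alpha$ for $P_1$ — in which case we are done with $i=1$, since a $2^{a-2}\vec n$-good system is $\vec n$-good for $a\ge 2$ by Lemma~\ref{verygoodisgood} — or there is a $2^{a-2}\vec n$-good subsystem $\vec S$ of $\vec T$ from $\vec\alpha$ for the complement $\overline P_1$. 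In the latter case, since $\vec S\subseteq\vec T\inter\overline P_1\subseteq P_2\union\cdots\union P_a$, the system $\vec S$ is $2^{a-2}\vec n$-good from $\vec\alpha$ for $P_2\union\cdots\union P_a$. Applying the inductive hypothesis (with $a-1$ sets $P_2,\ldots,P_a$ and the system $\vec S$, which is $2^{(a-1)-1}\vec n$-good from $\vec\alpha$), we obtain a subsystem $\vec R$ of $\vec S$ which is $\vec n$-good from $\vec\alpha$ for some $P_i$, $i\ge 2$. Since $\vec R$ is also a subsystem of $\vec T$, this completes the proof.

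The only genuine content beyond bookkeeping is verifying that Lemma~\ref{kum} applies at each stage, i.e.\ that being $2^{a-1}\vec n$-good is at least as strong (componentwise) as being $(2\cdot 2^{a-2}\vec n-\vec 1)$-good, which is clear since $2^{a-1}n_i=2\cdot 2^{a-2}n_i\ge 2\cdot 2^{a-2}n_i-1$ for each coordinate $i$, and then invoking Lemma~\ref{verygoodisgood} to pass from $2^{a-2}\vec n$-goodness down to $\vec n$-goodness (valid since $2^{a-2}n_i\ge n_i$ when $a\ge 2$). I expect no real obstacle here: the entire point of having developed Lemmas~\ref{verygoodisgood} and~\ref{kum} for systems is precisely to make this argument a mechanical transcription of the Section~\ref{proof-basic} proof. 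The one place to be slightly careful is the base case, ensuring the "trivial" subsystem is literally $\vec T$ and that the goodness constant matches; but this is routine, which is why the statement ends with \qed rather than a written-out proof.
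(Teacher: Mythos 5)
Your proof is correct and is exactly what the paper intends: the paper omits the argument with the remark that Lemma~\ref{notsharp-basic} ``generalizes by the same proof,'' and your transcription---induction on $a$, using Lemma~\ref{kum} in place of Lemma~\ref{kum-basic} together with Lemma~\ref{verygoodisgood} to absorb the $2\vec n-1$ versus $2^{a-1}\vec n$ bookkeeping---is that same proof. No gaps.
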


The following definition extends Definition~\ref{gvec-basic}.

\begin{df}\label{gvec}
Given a finite sequence of positive integers $\vec n_x$, $x\ge 0$, and a
finite partial function~$\epsilon$ from~$\omega$ to~$\omega$, let $\vec
g_x(\vec n_x,\epsilon)=2^a\vec n_x$ where
$$
a=\sum_{t\in \text{dom}(\epsilon)} h(\epsilon(t))
$$
and $h$ is as in Section~\ref{firstsec}.
\end{df}

Lemma~\ref{g-basic} now generalizes to the following Lemma~\ref{g}. The proof
of Lemma \ref{g} from Lemma~\ref{notsharp} is identical to the proof of Lemma
\ref{g-basic} from Lemma~\ref{notsharp-basic}.

\begin{lem}\label{g}
Let $\vec n_s$ be a finite sequence of positive integers, let $\epsilon$ be a
finite partial function from~$\omega$ to~$\omega$, and let $\vec g_s$ be the
function defined in Definition~\ref{gvec}.

For each pair $(t,i)$ satisfying $i<h(e_t)$ and $t\in \mathop{\rm
dom}(\epsilon)$, $t\le s$, let
$$
Q_{(t,i)}=\{\vec\beta_s:\Phi^{\vec\beta_t}_t(\epsilon(t))=i\}.
$$
Let
$$
Q=\{\vec\beta_s:\Phi^{\vec\beta_s}(\epsilon)\downarrow\}.
$$
If there is a $\vec g(\vec n,\epsilon)$-good system for~$Q$ from some
$\vec\alpha$, then for some $(t,i)$, there is an $\vec n$-good system from
$\vec\alpha$ for $Q_{(t,i)}$. \qed
\end{lem}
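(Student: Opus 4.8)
The plan is to mimic, essentially word for word, the derivation of Lemma~\ref{g-basic} from Lemma~\ref{notsharp-basic}, now with the system-of-trees machinery in place of plain trees. First I would unwind Definition~\ref{gvec}: the Turing functional $\Phi$ fixed there satisfies $\Phi^{\vec\beta_s}(\epsilon)\downarrow$ if and only if there is some $t\in\mathop{\rm dom}(\epsilon)$ with $t\le s$ and some $i<h(e_t)$ with $\Phi^{\vec\beta_t}_t(e_t)=i$. Hence $Q=\Union_{(t,i)}Q_{(t,i)}$, the union ranging over the finitely many pairs $(t,i)$ with $t\in\mathop{\rm dom}(\epsilon)$, $t\le s$, and $i<h(e_t)$; the number of such pairs is exactly the exponent $a=\sum_{t\in\mathop{\rm dom}(\epsilon)}h(e_t)$ appearing in Definition~\ref{gvec}.

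Next I would note that $a>0$: the hypothesis supplies a $\vec g(\vec n,\epsilon)$-good system for~$Q$ from~$\vec\alpha$, so $Q\ne\emptyset$, so at least one $Q_{(t,i)}$ is nonempty and in particular at least one pair $(t,i)$ exists. Since $\vec g(\vec n,\epsilon)=2^a\vec n\ge 2^{a-1}\vec n$ componentwise, Lemma~\ref{verygoodisgood} turns the given system into a $2^{a-1}\vec n$-good system for~$Q$ from~$\vec\alpha$. Because the system is a system \emph{for}~$Q$, each of its elements lies in some $Q_{(t,i)}$, so the covering hypothesis of Lemma~\ref{notsharp} is met with the $a$ sets $P_{(t,i)}=Q_{(t,i)}$. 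Applying Lemma~\ref{notsharp} then yields a pair $(t,i)$ together with a subsystem of the given system that is $\vec n$-good from~$\vec\alpha$ for $Q_{(t,i)}$, which is exactly the conclusion.

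I expect no genuine obstacle here: the substantive work of lifting trees to systems of trees was already done in passing from Lemma~\ref{notsharp-basic} to Lemma~\ref{notsharp}, so all that remains is bookkeeping — checking that the family $\{Q_{(t,i)}\}$ and the count $a$ line up with Definition~\ref{gvec} (including the side condition $t\le s$, which is what makes $\vec\beta_t$ a well-defined truncation of $\vec\beta_s$), and that being a ``system for~$Q$'' is precisely the inclusion $\vec T\subseteq\Union_{(t,i)}Q_{(t,i)}$ that Lemma~\ref{notsharp} requires. Accordingly, I would simply record that the proof is identical to that of Lemma~\ref{g-basic}.
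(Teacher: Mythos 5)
Your proposal is correct and follows exactly the route the paper intends: the paper explicitly declares the proof of Lemma~\ref{g} from Lemma~\ref{notsharp} to be ``identical to the proof of Lemma~\ref{g-basic} from Lemma~\ref{notsharp-basic},'' and you have carried out precisely that proof, including the check that $Q=\Union_{(t,i)}Q_{(t,i)}$, that the number of pairs is $a$, that $a>0$ from nonemptiness of a good system, and the reduction via Lemma~\ref{verygoodisgood} before invoking Lemma~\ref{notsharp}. (One small slip: the functional $\Phi$ with $\Phi^{\vec\beta_s}(\epsilon)$ is fixed in Definition~\ref{gvec-basic}, not in Definition~\ref{gvec}, but this does not affect the argument.)
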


In Lemma~\ref{induct} below we will generalize Lemma~\ref{induct-basic}. To
that end we first prove Lemma~\ref{claim} below. Lemma~\ref{claim} can be
viewed as a generalization of the following observation. Recall the notion of
$b/a$-good from Definition~\ref{claim-basic}. Suppose $a<b$ and there exists
a $b/a$-good tree~$T$ from~$\alpha$ for a set~$P$, but there is no
$b/a+1$-good tree from~$\alpha$ for~$P$. Suppose $k_1,\ldots,k_a$ are $a$
many distinct integers such that for each~$i$, $T$ contains a tree which is
$n$-good from $\alpha*k_i$ for~$P$. Then there is no
$k\not\in\{k_1,\ldots,k_a\}$ such that $T$ contains a tree which is $n$-good
from~$\alpha*k$ for~$P$.

\begin{lem}\label{claim}
Suppose we are given $x\ge 0$, a sequence of strings
$\vec\omega_x$, a sequence of positive integers $\vec n_x$, and an
open set $P\subseteq \Omega^{x+1}$.

Suppose $0\le m<n_x$, and $\vec T_x$ is a $(\vec n_{x-1},n_x/m)$-good system
from~$\vec\omega_x$ for~$P$, but there is no $(\vec n_{x-1},n_x/(m+1))$-good
system from $\vec\omega_x$ for~$P$.

Given $\vec\beta_{x-1}\in\vec T_{x-1}$, let $k_i(\vec\beta_{x-1})$,
$i=1,\ldots,m$ denote $m$ many numbers~$k$ for which $T(\vec\beta_{x-1})$ is
$n_x$-good from~$\omega_x*k$.

Then it is not the case that for every $\vec\beta_{x-1}\in\vec T_{x-1}$ there
exists an $\vec n_{x-1}$-good system $\vec G^{\vec\beta_{x-1}}_{x-1}$ from
$\vec\beta_{x-1}$ for which for each $\vec\xi_{x-1}\in\vec
G^{\vec\beta_{x-1}}_{x-1}$ there exists
$k(\vec\xi_{x-1})\not\in\{k_i(\vec\beta_{x-1}):1\le i\le m\}$ such that there
exists $G^{\vec\beta_{x-1}}_x(\vec\xi_{x-1})$ which is $n_x$-good for
$P(\vec\xi_{x-1},\cdot)$ from $\omega_x*k(\vec\xi_{x-1})$.
\end{lem}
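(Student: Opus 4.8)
The plan is to argue by contradiction: assume the displayed ``bad'' configuration exists, and build from it a $(\vec n_{x-1},n_x/(m+1))$-good system from $\vec\omega_x$ for $P$, contradicting the hypothesis that none exists. So suppose that for every $\vec\beta_{x-1}\in\vec T_{x-1}$ we are handed an $\vec n_{x-1}$-good system $\vec G^{\vec\beta_{x-1}}_{x-1}$ from $\vec\beta_{x-1}$, together with, for each $\vec\xi_{x-1}$ in that system, an ``extra'' successor number $k(\vec\xi_{x-1})\notin\{k_i(\vec\beta_{x-1}):1\le i\le m\}$ and an $n_x$-good tree $G^{\vec\beta_{x-1}}_x(\vec\xi_{x-1})$ from $\omega_x*k(\vec\xi_{x-1})$ for $P(\vec\xi_{x-1},\cdot)$.

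First I would assemble the top layer. Fix $\vec\beta_{x-1}\in\vec T_{x-1}$. Inside $T_x(\vec\beta_{x-1})$ we already have, by hypothesis, $m$ disjoint $n_x$-good trees rooted at $\omega_x*k_1,\dots,\omega_x*k_m$; taking these together with the new tree rooted at $\omega_x*k(\vec\xi_{x-1})$ for a suitable $\vec\xi_{x-1}$ gives $m+1$ distinct immediate successors of $\omega_x$ each carrying an $n_x$-good subtree, i.e. an $n_x/(m+1)$-good tree from $\omega_x$ — but the catch is that the extra successor $k(\vec\xi_{x-1})$ depends on $\vec\xi_{x-1}$, so we cannot just glue these top-layer trees onto $\vec T_{x-1}$ directly. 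The fix is to refine the lower $x$ coordinates: replace $\vec T_{x-1}$ by the system whose fiber over $\vec\beta_{x-1}$ is $\vec G^{\vec\beta_{x-1}}_{x-1}$, which is $\vec n_{x-1}$-good from $\vec\sigma_{x-1}$ (composing the $\vec n_{x-1}$-goodness of $\vec T_{x-1}$ from $\vec\sigma_{x-1}$ with the $\vec n_{x-1}$-goodness of each $\vec G^{\vec\beta_{x-1}}_{x-1}$ from $\vec\beta_{x-1}$ — here one uses that goodness is preserved under extending the root along a node of a good tree, essentially Lemma~\ref{induct-basic}). Over each leaf $\vec\xi_{x-1}$ of this refined lower system, we now genuinely have a well-defined $n_x/(m+1)$-good tree: the $m$ old good trees from $\omega_x*k_i$ (which still sit inside $T_x$, hence are still good for $P(\vec\xi_{x-1},\cdot)$ once we note $P$ is open and $\vec\xi_{x-1}$ extends $\vec\beta_{x-1}$, so $P(\vec\xi_{x-1},\cdot)\supseteq$ the relevant sets) together with $G^{\vec\beta_{x-1}}_x(\vec\xi_{x-1})$ rooted at $\omega_x*k(\vec\xi_{x-1})$. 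Assembling these fiberwise yields a $(\vec n_{x-1},n_x/(m+1))$-good system from $\vec\omega_x$ for $P$, the desired contradiction.

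The main obstacle I anticipate is precisely this book-keeping about how the top-layer witnesses depend on the choice of path through the lower layers, and making sure the $m$ ``old'' good trees rooted at $\omega_x*k_i(\vec\beta_{x-1})$ survive when we pass from $\vec\beta_{x-1}$ down to a leaf $\vec\xi_{x-1}$ of $\vec G^{\vec\beta_{x-1}}_{x-1}$ — one must check that $n_x$-goodness of a tree for a set $P(\vec\beta_{x-1},\cdot)$ implies $n_x$-goodness for $P(\vec\xi_{x-1},\cdot)$ when $\vec\xi_{x-1}$ extends $\vec\beta_{x-1}$, which follows from openness of $P$ (extending the first $x$ coordinates only enlarges the fiber $P(\cdot,\cdot)$ in the last coordinate). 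A secondary point is to make sure the $m+1$ successor numbers are genuinely distinct, which is guaranteed by the explicit requirement $k(\vec\xi_{x-1})\notin\{k_i(\vec\beta_{x-1}):1\le i\le m\}$ in the statement. Once these compatibility checks are in place, the counting — $m$ old successors plus one new one equals $m+1$ — is immediate, and the assembled system witnesses $(\vec n_{x-1},n_x/(m+1))$-goodness, contradicting the hypothesis.
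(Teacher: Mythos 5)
Your proposal is correct and follows essentially the same route as the paper's own proof: assuming the lemma fails, graft the systems $\vec G^{\vec\beta_{x-1}}_{x-1}$ onto $\vec T_{x-1}$, enlarge each top fiber over $\vec\xi_{x-1}$ by the $n_x$-good parts of $T_x(\vec\beta_{x-1})$ above $\omega_x*k_1,\ldots,\omega_x*k_m$ together with $G^{\vec\beta_{x-1}}_x(\vec\xi_{x-1})$ above $\omega_x*k(\vec\xi_{x-1})$, and use openness of $P$ to transfer goodness from $P(\vec\beta_{x-1},\cdot)$ to $P(\vec\xi_{x-1},\cdot)$, producing an $(\vec n_{x-1},n_x/(m+1))$-good system from $\vec\omega_x$ for $P$, a contradiction. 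The only blemish is the notational slip of writing $\vec\sigma_{x-1}$ where the root $\vec\omega_{x-1}$ is meant.
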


\begin{figure}
\setlength{\unitlength}{4144sp}%
\begingroup\makeatletter\ifx\SetFigFont\undefined
\def\x#1#2#3#4#5#6#7\relax{\def\x{#1#2#3#4#5#6}}%
\expandafter\x\fmtname xxxxxx\relax \def\y{splain}%
\ifx\x\y   
\gdef\SetFigFont#1#2#3{%
  \ifnum #1<17\tiny\else \ifnum #1<20\small\else
  \ifnum #1<24\normalsize\else \ifnum #1<29\large\else
  \ifnum #1<34\Large\else \ifnum #1<41\LARGE\else
     \huge\fi\fi\fi\fi\fi\fi
  \csname #3\endcsname}%
\else \gdef\SetFigFont#1#2#3{\begingroup
  \count@#1\relax \ifnum 25<\count@\count@25\fi
  \def\x{\endgroup\@setsize\SetFigFont{#2pt}}%
  \expandafter\x
    \csname \romannumeral\the\count@ pt\expandafter\endcsname
    \csname @\romannumeral\the\count@ pt\endcsname
  \csname #3\endcsname}%
\fi \fi\endgroup
\begin{picture}(3286,2338)(664,-2564)
\put(1238,-905){\makebox(0,0)[lb]{\smash{\SetFigFont{10}{12.0}{rm}$G_{0}^{\beta_{0}}$}}}
\thinlines \put(2701,-1411){\line(-1, 2){337.600}}
\put(2363,-736){\line( 1, 0){675}}
\put(3038,-736){\line(-1,-2){337.400}} \put(3151,-2311){\line( 1,
2){450}} \put(3151,-2311){\line(-1, 2){450}}
\put(1351,-1411){\line(-1, 2){450}} \put(901,-511){\line( 1,
0){900}} \put(1801,-511){\line(-1,-2){450}}
\put(1126,-2311){\line(-1, 2){450}} \put(676,-1411){\line( 1,
0){900}} \put(1576,-1411){\line(-1,-2){450}}
\put(2476,-1805){\makebox(0,0)[lb]{\smash{\SetFigFont{10}{12.0}{rm}$k_{1}(\beta_{0})$}}}
\put(3376,-904){\makebox(0,0)[lb]{\smash{\SetFigFont{10}{12.0}{rm}$G_{1}^{\beta_{0}}(\xi_{0})$}}}
\put(2532,-904){\makebox(0,0)[lb]{\smash{\SetFigFont{10}{12.0}{rm}$T_{1}(\beta_{0})$}}}
\put(3488,-1804){\makebox(0,0)[lb]{\smash{\SetFigFont{10}{12.0}{rm}$k(\xi_{0})$}}}
\put(3094,-2479){\makebox(0,0)[lb]{\smash{\SetFigFont{10}{12.0}{rm}$\omega_1$}}}
\put(1070,-2479){\makebox(0,0)[lb]{\smash{\SetFigFont{10}{12.0}{rm}$\omega_0$}}}
\put(1070,-1917){\makebox(0,0)[lb]{\smash{\SetFigFont{10}{12.0}{rm}$T_0$}}}
\put(1238,-1523){\makebox(0,0)[lb]{\smash{\SetFigFont{10}{12.0}{rm}$\beta_0$}}}
\put(1688,-454){\makebox(0,0)[lb]{\smash{\SetFigFont{10}{12.0}{rm}$\xi_0$}}}
\put(3601,-1411){\line(-1, 2){337.600}} \put(3263,-736){\line( 1,
0){675}} \put(3938,-736){\line(-1,-2){337.400}}
\end{picture}

\caption{The case $x=1$, $m=1$ of Lemma~\ref{claim}.}
\end{figure}

\begin{proof}
Suppose $\vec\xi_{x-1}\in\vec G^{\vec\beta_{x-1}}_{x-1}$. Since $\vec
G^{\vec\beta_{x-1}}_{x-1}$ is good from $\vec\beta_{x-1}$, we know that
$\vec\xi_{x-1}$ extends $\vec\beta_{x-1}$ componentwise. Let
$\vec\beta_{x-1}$ be the restriction of $\vec\xi_{x-1}$ to $\vec T_{x-1}$.

Suppose the lemma fails. Let $\vec G_x=\Union\{\vec
G^{\vec\beta_{x-1}}_x:\vec\beta_{x-1}\in\vec T_{x-1}\}$. Let $\vec H_x=\vec
G_x$ except that
\[
H_x(\vec\xi_{x-1})=G_x(\vec\xi_{x-1})\union
T_x(\vec\beta_{x-1})
\]
for each $\vec\xi_{x-1}$ and its restriction $\vec\beta_{x-1}$ to $\vec
T_{x-1}$.

If $i$ is a number such that $1\le i\le m$, then $T_x(\vec\beta_{x-1})$ is an
$n_x$-good tree for $P(\vec\beta_{x-1},\cdot)$ from $\omega_x*k_i$ and hence
by openness of~$P$ also an $n_x$-good tree for $P(\vec\xi_{x-1},\cdot)$ from
$\omega_x*k_i$ for each $1\le i\le m$, since $\vec\xi_{x-1}$ extends
$\vec\beta_{x-1}$ componentwise.

But $G_x(\vec\xi_{x-1})$ is an $n_x$-good tree for $P(\vec\xi_{x-1},\cdot)$
from $\omega_x*k(\vec\xi_{x-1})$. Hence $H_x(\vec\xi_{x-1})$ is an
$n_x/(m+1)$-good tree for $P(\vec\xi_{x-1},\cdot)$ from~$\omega_x$. So $\vec
H_x$ is an $(\vec n_{x-1},n_x/m+1)$-good system for~$P$ from~$\vec\omega_x$,
contradiction.
\end{proof}

\begin{lem}\label{induct}
Suppose we are given $\vec\alpha_x$ and $\vec n_x$ and an open set
$P\subseteq \Omega^{x+1}$ such that there is no $\vec n_x$-good
system from $\vec\alpha_x$ for~$P$.

If $\vec V_x$ is an $\vec n_x$-good system from $\vec\alpha_x$ then there
exists $\vec\beta_x$ such that
\begin{enumerate}
\item $\beta_x$ extends componentwise an element of~$\vec V_x$,
and

\item there is no $\vec n_x$-good system from $\vec\beta_x$ for
$P$.
\end{enumerate}
\end{lem}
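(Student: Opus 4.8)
The plan is to prove Lemma~\ref{induct} by induction on~$x$. The base case $x=0$ is exactly Lemma~\ref{induct-basic}: a one-entry system is a single tree, $\Omega^{1}=\Omega=\omega^{<\omega}$, and ``$\beta_0$ componentwise extends an element of $\vec V_0$'' just says $\beta_0$ extends an element of the tree~$V_0$. So fix $x\ge 1$, assume the lemma for $x-1$, and suppose we are given $\vec\alpha_x$, $\vec n_x$, an open $P\subseteq\Omega^{x+1}$ admitting no $\vec n_x$-good system from $\vec\alpha_x$, and an $\vec n_x$-good system $\vec V_x$ from~$\vec\alpha_x$. If some element of $\vec V_x$ already admits no $\vec n_x$-good system for~$P$, take that element as~$\vec\beta_x$; so assume otherwise. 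The whole difficulty is then to control the last coordinate of a hypothetical good system.

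First I would let $m$ be the largest integer for which there is an $(\vec n_{x-1},n_x/m)$-good system from $\vec\alpha_x$ for~$P$; here $m<n_x$ since $\vec n_x$-good coincides with $(\vec n_{x-1},n_x/n_x)$-good, and $m\ge 0$ is available in the main case by using singleton trees in the last coordinate (the genuinely degenerate cases, where even that fails, being disposed of by a direct appeal to the inductive hypothesis applied to the first $x-1$ coordinates). Fix such an $(\vec n_{x-1},n_x/m)$-good system $\vec T_x$ from $\vec\alpha_x$ for~$P$; by maximality of~$m$ there is no $(\vec n_{x-1},n_x/(m+1))$-good system from $\vec\alpha_x$ for~$P$, so Lemma~\ref{claim} applies. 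It yields an element $\vec\beta_{x-1}\in\vec T_{x-1}$ and finitely many ``forbidden'' values $k_1,\ldots,k_m$ such that no $\vec n_{x-1}$-good system from $\vec\beta_{x-1}$ has the property displayed in its conclusion. Unwinding this: if $\widehat P\subseteq\Omega^{x}$ denotes the set of $\vec\xi_{x-1}$ for which there is a $k\notin\{k_1,\ldots,k_m\}$ admitting an $n_x$-good tree for $P(\vec\xi_{x-1},\cdot)$ from $\alpha_x*k$, then $\widehat P$ is open (this uses only openness of~$P$) and there is no $\vec n_{x-1}$-good system from $\vec\beta_{x-1}$ for~$\widehat P$.

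Next I would apply the inductive hypothesis to $\vec\beta_{x-1}$, $\vec n_{x-1}$, $\widehat P$, and any convenient $\vec n_{x-1}$-good system from $\vec\beta_{x-1}$, obtaining $\vec\eta_{x-1}$ which componentwise extends $\vec\beta_{x-1}$ and admits no $\vec n_{x-1}$-good system for~$\widehat P$. I would then put $\vec\beta_x=(\vec\eta_{x-1},\alpha_x)$. For clause~(2): a hypothetical $\vec n_x$-good system from $\vec\beta_x$ for~$P$ supplies, for each $\vec\delta_{x-1}$ occurring in its first $x-1$ coordinates, an $n_x$-good tree from $\alpha_x$ for $P(\vec\delta_{x-1},\cdot)$; either this tree is $\{\alpha_x\}$, so $(\vec\delta_{x-1},\alpha_x)\in P$ and openness gives an $n_x$-good tree for $P(\vec\delta_{x-1},\cdot)$ from $\alpha_x*k$ for \emph{every}~$k$, or it has at least $n_x>m$ immediate successors $\alpha_x*k$ of $\alpha_x$ above which it contains an $n_x$-good subtree; either way some such $k$ lies outside $\{k_1,\ldots,k_m\}$, so $\vec\delta_{x-1}\in\widehat P$. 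Thus the first $x-1$ coordinates of the hypothetical system would form an $\vec n_{x-1}$-good system from $\vec\eta_{x-1}$ for~$\widehat P$, contradicting the inductive hypothesis. Clause~(1), that $\beta_x$ componentwise extends an element of~$\vec V_x$, then has to be arranged by weaving $\vec V_x$ into the choice of~$m$ and~$\vec T_x$ and into each use of the inductive hypothesis above; this is the delicate bookkeeping, since Lemma~\ref{claim} wants $\vec T_x$ maximal without reference to~$\vec V_x$.

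I expect the real obstacle to be the analogue of the one-line fusion in Lemma~\ref{induct-basic}. There good trees glue freely at their leaves; for systems this fails, because the tree assigned to a node in an early coordinate may depend on the whole branch lying below it in the later coordinates, so the first $x-1$ coordinates of the systems furnished by ``every leaf admits a good system'' do not line up and cannot be fused against one template. Lemma~\ref{claim} is precisely the device that sidesteps this: it says that in the maximal-$m$ situation the last coordinate cannot be pushed beyond the finitely many values $k_1,\ldots,k_m$ \emph{even after} arbitrarily refining the earlier coordinates, reducing the last coordinate to finite bookkeeping. The remaining care goes into (i) the choice of $m$ and $\vec T_x$ and the treatment of the degenerate cases, (ii) checking that $\widehat P$ and the auxiliary sets in those cases are open, and (iii) ensuring the final $\vec\beta_x$ lands inside the cone of an element of~$\vec V_x$.
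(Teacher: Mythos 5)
Your verification of clause (2) is essentially sound: the set $\widehat P$ is open, Lemma~\ref{claim} does give a counterexample $\vec\beta_{x-1}$ from which there is no $\vec n_{x-1}$-good system for $\widehat P$, and the case split (singleton tree versus $n_x>m$ successors) correctly shows that the first $x-1$ coordinates of a hypothetical good system from $(\vec\eta_{x-1},\alpha_x)$ would land in $\widehat P$. But the proposal has a genuine gap, and it is exactly the part you defer to ``delicate bookkeeping'': clause (1) is never arranged, and it cannot be arranged within your single application of Lemma~\ref{claim} at $\vec\alpha_x$. Your $\vec\beta_x=(\vec\eta_{x-1},\alpha_x)$ has last coordinate $\alpha_x$, which extends no element of $V_x(\cdot)$ unless that tree happens to be the singleton $\{\alpha_x\}$; worse, $\vec\eta_{x-1}$ extends an element of $\vec T_{x-1}$, a system chosen by maximality of $m$ with no reference to $\vec V_x$, so even the first $x-1$ coordinates need not extend any element of $\vec V_{x-1}$. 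Since Lemma~\ref{induct} is applied (in Lemma~\ref{accept}, (2) implies (3)) precisely to find a point \emph{inside the cone of an element of the given system} $\vec V$, the weakened statement you actually prove (some $\vec\beta_x$ extending $\vec\alpha_x$ with no good system for $P$) does not suffice.

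The paper's proof is organized around clause (1) from the start, and its two devices are what your sketch is missing. First, the induction hypothesis is applied not to $\widehat P$ with ``any convenient system'' but to the projected property $P_{x-1}=\{\vec\beta_{x-1}:\text{there is an $n_x$-good tree from $\alpha_x$ for }P(\vec\beta_{x-1},\cdot)\}$ together with $\vec V_{x-1}$ itself; this yields $\vec\sigma_{x-1}$ extending an element $\vec\gamma_{x-1}\in\vec V_{x-1}$ with no good system from $(\vec\sigma_{x-1},\alpha_x)$, pinning the early coordinates to $\vec V$. Second, the last coordinate is then driven up into $V_x(\vec\gamma_{x-1})$ one symbol at a time: at each intermediate position $\vec\omega_x[i]$ one takes a \emph{fresh} maximal $m$ and applies Lemma~\ref{claim} again (the forbidden values $k_1,\dots,k_m$ depend on the current base string, so one application at $\vec\alpha_x$ controls only the first level), and uses that $V_x$ is $n_x$-good with $n_x\ge m+1$ to choose a next symbol $k$ avoiding the forbidden values while staying below an element of $V_x$; finiteness of $V_x$ terminates the walk (and the step where no $m\ge 1$ exists is handled trivially by extending toward $V_x$). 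This iterated use of Lemma~\ref{claim} along a path into $V_x$ is the heart of the argument, so the proposal as written stops short of the essential content of the lemma.
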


\begin{proof}
By Lemma~\ref{induct-basic}, it is immediate that Lemma~\ref{induct} holds
for $x=0$. Inductively, suppose $x\ge 1$ is given such that Lemma
\ref{induct} holds for $x-1$; we will show that Lemma~\ref{induct} holds for
$x$. From the hypothesis of Lemma~\ref{induct}, we are given that there is no
$\vec n_x$-system from $\vec\alpha_x$ for~$P$, and we let $\vec V_x$ be as in
the statement of Lemma~\ref{induct}.

Let $P_{x-1}$ be the property defined by: for all $\vec\beta_{x-1}$,
$P_{x-1}(\vec\beta_{x-1})$ holds iff there is an $n_x$-good tree from
$\alpha_x$ for the property $\{\alpha:P(\vec\beta_{x-1},\alpha)\}$.

We note that Lemma~\ref{induct} for $x-1$ is applicable to
$\vec\alpha_{x-1}$, $\vec n_{x-1}$, $P_{x-1}$ and $\vec V_{x-1}$. Indeed if
there exists an $\vec n_{x-1}$-good system for $P_{x-1}$ from
$\vec\alpha_{x-1}$ then there would exist an $\vec n_x$-good system for~$P$
from $\vec\alpha_x$, by the definition of the notion of a good system, and
this would contradict the hypothesis of Lemma~\ref{induct} for~$x$. And $\vec
V_{x-1}$ is $\vec n_{x-1}$-good from $\vec\alpha_{x-1}$.

So by Lemma~\ref{induct} for $x-1$, there exists $\vec\sigma_{x-1}$ extending
componentwise an element of $\vec V_{x-1}$, such that there is no $\vec
n_{x-1}$-good system from $\vec\sigma_{x-1}$ for $P_{x-1}$. In other words,
there is no $\vec n_x$-good system from $(\vec\sigma_{x-1},\alpha_x)$ for
$P$.

Fix such a $\vec\sigma_{x-1}$. Let $\vec\gamma_{x-1}$ be the element of $\vec
V_{x-1}$ such that $\vec\sigma_{x-1}$ extends $\vec\gamma_{x-1}$
componentwise, and let $V_x$ be a shorthand for $V_x(\vec\gamma_{x-1})$. Let
$$
Q=\{\vec\tau_x: \text{there is no $\vec n_x$-good system for~$P$ from
     $\vec\tau_x$} \And{}
  \exists\rho\in V_x(\rho\supseteq\tau_x)\}.
$$

To complete the proof of the lemma, we will now construct $\vec\omega_x[0]$,
$\vec\omega_x[1]$, $\ldots$, $\vec\omega_x[p]$ for some $p\in\omega$, such
that $\vec\omega_x[p]$ satisfies the conclusion of Lemma \ref{induct}. To
accomplish this we will ensure that for each $0\le i\le p$, $\omega_x[i]\in
Q$, and $\omega_x[p]\in V_x$.

Let $\vec\omega_x[0]=(\vec\sigma_{x-1},\alpha_x)$. Note $\vec\omega_x[0]\in
Q$. If $\omega_x[0]\in V_x$ then just let~$p=0$.

So suppose we are given $\vec\omega_x[i]\in Q$ for some $i\ge 0$, such that
$\omega_x[i]\not\in V_x$.

Let $m\ge 1$ be maximal such that there is a system $\vec T_x$ which is
$(\vec n_{x-1},n_x/ m)$-good from $\vec\omega_x[i]$ for~$P$, if such an~$m$
exists. If $m$ exists, then since there is no $\vec n_x$-system from
$\vec\omega_x[i]$, we have $m<n_x$; let $\vec T_x$ be such a system.

If $m$ does not exist, let $\vec\omega_x[i+1]$ be
$(\vec\omega_{x-1}[i],\omega_x[i]*k)$ for some~$k$ such that
$\omega_x[i]*k\subseteq\rho$ for some $\rho\in V_x$. Such a~$k$ exists
because $\exists\rho\in V_x.\rho\supseteq\omega_x[i]$ and $\omega_x[i]\not\in
V_x$. Note that $\omega_x[i+1]\in Q$.

So we may assume $m$ does exist. Given $\vec\beta_{x-1}\in\vec T_{x-1}$, we
use the notation $k_i(\vec\beta_{x-1})$, $i=1,\ldots,m$ to list $m$ many
numbers~$k$ for which $T(\vec\beta_{x-1})$ is $n_x$-good from
$\omega_x[i]*k$.

Let us temporarily say that $\vec G_x$ is a useful system for
$\vec\beta_{x-1}$ if $\vec G_{x-1}$ is an $\vec n_{x-1}$-good system from
$\vec\beta_{x-1}$ for which for each $\vec\xi_{x-1}\in\vec G_{x-1}$ there
exists $k(\vec\xi_{x-1})\not\in\{k_i(\vec\beta_{x-1}):1\le i\le m\}$ such
that there exists $G_x(\vec\xi_{x-1})$ which is $n_x$-good for
$P(\vec\xi_{x-1},\cdot)$ from $\omega_x[i]*k(\vec\xi_{x-1})$.

By Lemma~\ref{claim}, it is not the case that for every
$\vec\beta_{x-1}\in\vec T_{x-1}$ there exists a useful system. Thus, let
$\vec\beta_{x-1}$ be a counterexample.

Since $V_x$ is $n_x$-good and $n_x\ge m+1$, $V_x$ is $m+1$-good. We also know
that $\exists\rho\in V_x.\rho\supseteq\omega_x[i]$ and $\omega_x[i]\not\in
V_x$. It follows that there exists $k\not\in\{k_i(\vec\beta_{x-1}):1\le i\le
m\}$ such that $\omega_x[i]*k$ is extended by an element of~$V_x$. Fix such a
$k$ and let $\vec\omega_x[i+1]=(\vec\beta_{x-1},\omega_x[i]*k)$.

If there existed an $\vec n_x$-good system $\vec D_x$ for~$P$ from
$(\vec\beta_{x-1},\omega_x[i]*k)$, then $\vec D_x$ would be a useful system
for $\vec\beta_{x-1}$ (with $k(\vec\xi_{x-1}):=k$ for each $\vec\xi_{x-1}$),
contradiction. Hence $\vec\omega_x[i+1]\in Q$.

Since $V_x$ is finite, we eventually reach an~$i$ such that $\omega_x[i]\in
V_x$. Letting $p=i$ completes the proof of the lemma.
\end{proof}

\begin{figure}
\setlength{\unitlength}{4144sp}%
\begingroup\makeatletter\ifx\SetFigFont\undefined
\def\x#1#2#3#4#5#6#7\relax{\def\x{#1#2#3#4#5#6}}%
\expandafter\x\fmtname xxxxxx\relax \def\y{splain}%
\ifx\x\y   
\gdef\SetFigFont#1#2#3{%
  \ifnum #1<17\tiny\else \ifnum #1<20\small\else
  \ifnum #1<24\normalsize\else \ifnum #1<29\large\else
  \ifnum #1<34\Large\else \ifnum #1<41\LARGE\else
     \huge\fi\fi\fi\fi\fi\fi
  \csname #3\endcsname}%
\else \gdef\SetFigFont#1#2#3{\begingroup
  \count@#1\relax \ifnum 25<\count@\count@25\fi
  \def\x{\endgroup\@setsize\SetFigFont{#2pt}}%
  \expandafter\x
    \csname \romannumeral\the\count@ pt\expandafter\endcsname
    \csname @\romannumeral\the\count@ pt\endcsname
  \csname #3\endcsname}%
\fi \fi\endgroup
\begin{picture}(3737,2454)(1226,-5659)
\put(1801,-3436){\makebox(0,0)[lb]{\smash{\SetFigFont{10}{12.0}{rm}$\beta_0$}}}
\thinlines \put(3263,-5011){\line( 3,-4){337.680}}
\put(4726,-4336){\line(-1,-1){1125}}
\put(4163,-4336){\line(-1,-2){562.400}} \put(3601,-4336){\line(
0,-1){1125}} \put(2700,-5011){\line(-1, 2){225}}
\put(2475,-4561){\line( 1, 0){450}}
\put(2925,-4561){\line(-1,-2){225}} \put(3263,-5011){\line(-1,
2){225}} \put(3038,-4561){\line( 1, 0){450}}
\put(3488,-4561){\line(-1,-2){225}} \put(3601,-4336){\line(-1,
2){225}} \put(3376,-3886){\line( 1, 0){450}}
\put(3826,-3886){\line(-1,-2){225}} \put(4163,-4336){\line(-1,
2){225}} \put(3938,-3886){\line( 1, 0){450}}
\put(4388,-3886){\line(-1,-2){225}} \put(4726,-4336){\line(-1,
2){225}} \put(4501,-3886){\line( 1, 0){450}}
\put(4951,-3886){\line(-1,-2){225}} \put(1576,-4561){\line( 0,
1){450}} \put(1801,-5461){\line(-1, 2){450}}
\put(1351,-4561){\line( 1, 0){900}}
\put(2251,-4561){\line(-1,-2){450}} \put(1576,-4110){\line(-3,
5){337.412}} \put(1238,-3548){\line( 1, 0){675}}
\put(1913,-3548){\line(-3,-5){337.147}}
\put(3601,-5573){\makebox(0,0)[lb]{\smash{\SetFigFont{10}{12.0}{rm}$\alpha_1$}}}
\put(3151,-5123){\makebox(0,0)[lb]{\smash{\SetFigFont{10}{12.0}{rm}$k_{2}(\beta_{0})$}}}
\put(2588,-5236){\makebox(0,0)[lb]{\smash{\SetFigFont{10}{12.0}{rm}$k_{1}(\beta_{0})$}}}
\put(2813,-4336){\makebox(0,0)[lb]{\smash{\SetFigFont{10}{12.0}{rm}$T_{1}(\beta_{0})$}}}
\put(4613,-4561){\makebox(0,0)[lb]{\smash{\SetFigFont{10}{12.0}{rm}$k$}}}
\put(3938,-3548){\makebox(0,0)[lb]{\smash{\SetFigFont{10}{12.0}{rm}$V_{1}(\sigma_{0})$}}}
\put(1801,-5573){\makebox(0,0)[lb]{\smash{\SetFigFont{10}{12.0}{rm}$\alpha_0$}}}
\put(1688,-5011){\makebox(0,0)[lb]{\smash{\SetFigFont{10}{12.0}{rm}$V_0$}}}
\put(1688,-4223){\makebox(0,0)[lb]{\smash{\SetFigFont{10}{12.0}{rm}$\sigma_0$}}}
\put(1463,-3773){\makebox(0,0)[lb]{\smash{\SetFigFont{10}{12.0}{rm}$T_0$}}}
\put(2701,-5011){\line( 2,-1){900}}
\end{picture}

\caption{The case $x=1$, $m=2$ of Lemma~\ref{induct}.}
\end{figure}

The following definition extends Definition~\ref{f-basic} to systems of
trees.

\begin{df}\label{f}
Given $x\ge 0$, a sequence of strings $\vec\alpha_x$ where each
$\alpha_i\in\Omega$, $c\in\omega$ and a sequence of positive
integers $\vec n_x$, let $f=f_{\vec\alpha,c,\vec n_x}$ be defined
by the condition: for all $z,t\in\omega$, $\Phi_{f(e),t}(z)=i$ if
in $t$ steps a finite system of trees $\vec T_x$ and a number
$i<h(e)$ are found such that $\vec T_x$ is $\vec n_x$-good from
$\vec\alpha_x$ for $\{\vec\beta_x:\Phi_c^{\vec\beta_x}(e)=i\}$
(and $i$ is the~$i$ occurring for the first such tree found). If
no such $\vec T_x$ and $i$ are found within $t$ steps, then
$\Phi_{f(e),t}(x)$ is undefined.
\end{df}

\begin{df} \label{con} \emph{The Construction.}
At any stage $s+1$, the finite set $D_{s+1}$ will consist of
indices $t\le s$ for computations $\Phi^G_t$ that we want to
ensure are divergent. The set $A_{s+1}$ will consist of what we
think of as acceptable pseudostrings. At stage $s$ we will define
a sequence of positive integers $\vec n[s]=\vec
n_s[s]=(n_0[s],\ldots,n_s[s])$; so the entries of this vector are
$n_t[s]$, $0\le t\le s$.

\noindent\emph{Stage 0.}

Let $G[0]=\emptyset$, the empty pseudostring, and
$\epsilon[0]=\emptyset$. Let $\vec n[0]=\la 2\ra$. Let
$D_0=\emptyset$ and $A_0=\Omega$.

\noindent\emph{Stage $s+1$, $s\ge 0$.}

Below we will define $D_{s+1}$. Given $D_{s+1}$, $A_{s+1}$ will be
the set of pseudostrings $\tau=\vec\tau_s$ such that $\tau_t$
properly extends $G_t[s]$ for each $t\le s$, and for each $t\in
D_{s+1}$, there is no pair $\la
\vec T_t,i\ra$ such that $i<h(e_t)$ and $\vec T_t$ is a finite $\vec
n[t]$-good tree from~$\tau$ for
$Q_{(t,i)}=\{\sigma:\Phi_t^{\sigma}(e_t)=i\}$.

Let $\vec n_{s+1}[s+1]=(\vec g_s(\vec n_s[s],\epsilon[s]),2)$, with $\vec
g_s$ as in Definition~\ref{gvec}.

Let $e$ be the fixed point of $f=f_{G[s],s,\vec g(\vec n[s],\epsilon[s])}$
(as in Definition~\ref{f}) produced by the Recursion Theorem, i.~e.,
$\Phi_e=\Phi_{f(e)}$.

\noindent\emph{Case 1.} $\Phi_e(e)\downarrow$.

Fix $\vec T_{s+1}$ as in Definition~\ref{f}. Let $D_{s+1}=D_s$. Let $G[s+1]$
be an extension (columnwise, nonempty on columns $\le s$ only) of~$G[s]$ such
that $G[s+1]\in \vec T_{s+1}$ and $G[s+1]\in A_{s+1}$.

\noindent\emph{Case 2.} $\Phi_e(e)\uparrow$. Let
$D_{s+1}=D_s\union\{s\}$. Let $\epsilon[s+1]:=\epsilon[s]\union\{(s,e)\}$, so
$e_s:=e$. Let $G[s+1]$ be any element of~$A_{s+1}$.

\noindent Let $G=\Union_{s\in\omega} G[s]$.

\noindent\emph{End of Construction.}
\end{df}

\noindent

We now prove that the Construction satisfies Theorem~\ref{theone} in a
sequence of lemmas.

\begin{lem}\label{atleasttwo}
For each $s,t\in\omega$ with $t\le s$, $n_t[s]\ge 2$.
\end{lem}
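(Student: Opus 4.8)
The statement to prove is Lemma~\ref{atleasttwo}: for each $s,t\in\omega$ with $t\le s$, $n_t[s]\ge 2$.

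Looking at the construction (Definition~\ref{con}), I need to track how the vector $\vec n[s]$ evolves. At stage $0$, $\vec n[0] = \langle 2 \rangle$, so $n_0[0] = 2 \ge 2$. At stage $s+1$, the rule is $\vec n_{s+1}[s+1] = (\vec g_s(\vec n_s[s], \epsilon[s]), 2)$, where $\vec g_s(\vec n_s[s], \epsilon[s]) = 2^a \vec n_s[s]$ for a nonnegative integer $a$ (Definition~\ref{gvec}). So the new last coordinate is literally $2$, and the previous coordinates are scaled by $2^a \ge 1$, hence remain $\ge 2$ by induction.

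\begin{proof}
We argue by induction on $s$. For $s = 0$, the only relevant $t$ is $t = 0$, and $n_0[0] = 2 \ge 2$ by the definition of $\vec n[0]$ in Stage~$0$ of the Construction.

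Now suppose the claim holds for $s$; we prove it for $s+1$. By the definition of Stage~$s+1$ in the Construction, $\vec n_{s+1}[s+1] = (\vec g_s(\vec n_s[s], \epsilon[s]), 2)$, where by Definition~\ref{gvec} we have $\vec g_s(\vec n_s[s], \epsilon[s]) = 2^a \vec n_s[s]$ with $a = \sum_{t \in \mathrm{dom}(\epsilon[s])} h(\epsilon[s](t)) \ge 0$. Hence for each $t \le s$, $n_t[s+1] = 2^a \cdot n_t[s] \ge n_t[s] \ge 2$ by the induction hypothesis, and for $t = s+1$, $n_{s+1}[s+1] = 2 \ge 2$. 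This completes the induction.
\end{proof}

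The only subtlety, and the point worth checking, is that $a \ge 0$ so that the scaling factor $2^a$ is at least $1$; this is clear since $a$ is a sum of (nonnegative) values of $h$. Everything else is routine bookkeeping from the definitions.
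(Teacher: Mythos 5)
Your proof is correct and follows essentially the same route as the paper's: induction on $s$, with the base case $n_0[0]=2$ and the inductive step using $\vec n[s+1]=(\vec g_s(\vec n_s[s],\epsilon[s]),2)$ together with the fact that $\vec g_s(\vec n_s[s],\epsilon[s])=2^a\vec n_s[s]$ for some $a\ge 0$. Your explicit note that $a\ge 0$ (so $2^a\ge 1$) is exactly the observation the paper relies on.
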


\begin{proof}
For $s=0$, we have $\vec n[0]=(n_0[0])=(2)$.

For $s+1$, we have $\vec n[s+1]=(\vec g_s(\vec n[s],\epsilon[s]),2)$ and
$g_s(\vec n[s],\epsilon[s])=2^a\vec n[s]$ for a certain $a\ge 0$, by
Definition~\ref{g}, hence the lemma follows.
\end{proof}

Note that $G[s]$, while only nonempty on columns $\le s-1$, can be considered
as defined on all columns, or as many additional columns as desired, in
accordance with Definition~\ref{firstdef}. For example, in Lemma
\ref{accept}(3) we think of~$G[s]$ as
$$
G_0[s]\oplus\cdots\oplus G_{s-1}[s]\oplus G_s[s]
$$
with $G_s[s]=\emptyset$.

\begin{lem}\label{accept}
For each $s \ge 0$, the following holds.

\begin{enumerate}
\item[(1)] The Construction at stage~$s$ is well-defined and
$G[s]\in A_s$. In particular, if $s>0$ then if Case 2 applies then $A_s$ is
nonempty, and if Case 1 applies then $A_s$ contains elements of~$\vec T$.

\item[(2)] There is no $\vec g_s(\vec n_s[s],\epsilon[s])$-good
system of trees for
$$
Q=\{\vec\beta_s:\Phi^{\vec\beta_s}(\epsilon[s])\downarrow\}
$$
from~$G[s]$.

\item[(3)] Every system~$\vec V_s$ which is $\vec g_s(\vec n_s[s],
\epsilon[s])$-good from~$G[s]$, and is not just the singleton
of~$G[s]$, contains an element of~$A_{s+1}$.
\end{enumerate}
\end{lem}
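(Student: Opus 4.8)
The plan is to prove Lemma~\ref{accept} by essentially the same scheme as its warm-up counterpart, Lemma~\ref{accept-basic}: establish the base case and then close a cycle of implications $(1)\Rightarrow(2)\Rightarrow(3)$ at each stage $s$, together with the step $(3)$ for $s$ $\Rightarrow$ $(1)$ for $s+1$. Thus it suffices to verify: $(1)$ holds at $s=0$; for every $s\ge 0$, $(1)$ at $s$ implies $(2)$ at $s$, which implies $(3)$ at $s$; and $(3)$ at $s$ implies $(1)$ at $s+1$. The base case $(1)$ at $s=0$ is immediate, since $G[0]=\emptyset\in\Omega=A_0$ and the construction at stage $0$ sets everything by fiat.

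First I would handle \emph{(1) implies (2)}. Assuming $G[s]\in A_s$, the definition of $A_s$ gives that for each $t\in D_s$ and each $i<h(e_t)$ there is no $\vec n[t]$-good system from $G[s]$ for $Q_{(t,i)}=\{\sigma:\Phi_t^\sigma(e_t)\downarrow=i\}$. Since (by Lemma~\ref{atleasttwo}) all entries of $\vec n[s]$ are at least~$2$ and in fact $\vec n[t]$ componentwise bounds what is needed, I would invoke Lemma~\ref{verygoodisgood} to conclude there is no $\vec n_s[s]$-good system from $G[s]$ for any $Q_{(t,i)}$ with $t\le s$. Then Lemma~\ref{g} (the systems-of-trees generalization of Lemma~\ref{g-basic}) yields that there is no $\vec g_s(\vec n_s[s],\epsilon[s])$-good system from $G[s]$ for $Q=\{\vec\beta_s:\Phi^{\vec\beta_s}(\epsilon[s])\downarrow\}$, which is exactly $(2)$.

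Next, \emph{(2) implies (3)}: given an $\vec g_s(\vec n_s[s],\epsilon[s])$-good system $\vec V_s$ from $G[s]$ that is not the singleton of $G[s]$, I would apply Lemma~\ref{induct} with $P=Q$ (which is open by inspection) to obtain a pseudostring $\vec\beta_s$ componentwise extending an element of $\vec V_s$, from which there is no $\vec g_s(\vec n_s[s],\epsilon[s])$-good system for $Q$, hence none for any $Q_{(t,i)}\subseteq Q$ either. Because $\vec V_s$ is an antichain not equal to the singleton of $G[s]$, the coordinate $\beta_t$ properly extends $G_t[s]$ for each $t\le s$, so by definition of $A_{s+1}$ this $\vec\beta_s$ lies in $A_{s+1}$. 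Finally, \emph{(3) for $s$ implies (1) for $s+1$}: in Case~1, the system $\vec T_{s+1}$ found by $\Phi_e$ is $\vec n[s+1]$-good (hence $\vec g_s(\vec n_s[s],\epsilon[s])$-good on its first $s$ coordinates, with a $2$-good top coordinate from the construction's definition of $\vec n_{s+1}[s+1]$) from $G[s]$ for some $Q_{(s,i)}$; if it is not just the singleton of $G[s]$, apply $(3)$ for $s$ to it to get $G[s+1]\in A_{s+1}\cap\vec T_{s+1}$. If $\vec T_{s+1}$ is the singleton of $G[s]$, or in Case~2, apply $(3)$ for $s$ to the system of immediate columnwise extensions of $G[s]$ (which is $\vec n[s+1]$-good and non-singleton) to get $G[s+1]\in A_{s+1}$.

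The main obstacle is bookkeeping the indices: one must be careful that the vector $\vec n_{s+1}[s+1]=(\vec g_s(\vec n_s[s],\epsilon[s]),2)$ has the right length and that ``$\vec n[t]$-good'' appearing in the definition of $A_{s+1}$ is correctly compared with the good-systems parameters appearing in Lemmas~\ref{g} and~\ref{induct}, since the length of the tuple grows with~$s$ and the appended coordinate~$2$ must be handled separately from the rescaled earlier coordinates. The openness of $Q$ as a subset of $\Omega^{s+1}$ should be checked explicitly (if $\Phi^{\vec\beta_s}(\epsilon[s])$ converges then so does $\Phi^{\vec\xi_s}(\epsilon[s])$ for any componentwise extension $\vec\xi_s$, by use-monotonicity of the functionals), so that Lemma~\ref{induct} applies. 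Everything else is a routine translation of the arguments already carried out in Section~\ref{proof-basic}.
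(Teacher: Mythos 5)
Your overall scheme is the paper's: verify (1) at $s=0$, prove $(1)\Rightarrow(2)\Rightarrow(3)$ at each stage and $(3)$ at $s$ $\Rightarrow$ $(1)$ at $s+1$, using Lemma~\ref{g} for $(1)\Rightarrow(2)$, Lemma~\ref{induct} with $P=Q$ for $(2)\Rightarrow(3)$, and the same case analysis (the system found by $\Phi_e$, or the system of immediate extensions) for $(3)\Rightarrow(1)$. However, in $(1)\Rightarrow(2)$ you leave unproved exactly the step that constitutes the bulk of the paper's argument. The hypothesis $G[s]\in A_s$ speaks of $\vec n[t]$-good systems of \emph{length} $t+1$ (the parameter vector frozen at stage $t$, on columns $0,\dots,t$), whereas Lemma~\ref{g} speaks of $\vec n_s[s]$-good systems of length $s+1$. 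Lemma~\ref{verygoodisgood} compares parameter vectors of the same length only; it cannot by itself convert ``no $\vec n[t]$-good system of length $t+1$'' into ``no $\vec n_s[s]$-good system of length $s+1$'' (and your phrase ``$\vec n[t]$ componentwise bounds what is needed'' has the comparison backwards: what is needed is $n_{t^*}[s]\ge n_{t^*}[t]$ for $t^*\le t$). The paper bridges the two by (i) noting that $\Phi_t$ queries only columns $\le t$ (Definition~\ref{phizero}(1)), so $Q_{(t,i)}$ depends only on the first $t+1$ columns; (ii) restricting the length-$(s+1)$ system produced by Lemma~\ref{g} to its first $t+1$ levels, which is then a system for $Q_{(t,i)}$; and (iii) the stagewise monotonicity of the entries ($g_{t^*}(\vec n[s-1],\epsilon[s-1])=2^a n_{t^*}[s-1]\ge n_{t^*}[s-1]$, iterated down to stage $t$), after which Lemma~\ref{verygoodisgood} applies. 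You flag this bookkeeping as ``the main obstacle'' but never supply the restriction step or the monotonicity argument, so as written the key inference is missing. (Also, the conclusion should be for $t\in D_s$, not for all $t\le s$.)

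In $(2)\Rightarrow(3)$ your justification that each coordinate $\beta_t$ \emph{properly} extends $G_t[s]$ — ``because $\vec V_s$ is an antichain not equal to the singleton of $G[s]$'' — is not the paper's reason and is too weak: a non-singleton system can have some column tree equal to the singleton of its stem, in which case that coordinate does not move. The paper instead derives proper extension on every column from the fact that all the goodness parameters in play are $\ge 2$ (Lemma~\ref{atleasttwo}), which is precisely why the construction keeps appending the entry $2$ to $\vec n[s+1]$; your sketch never uses this. Finally, a small indexing slip in $(3)\Rightarrow(1)$: the system found by $\Phi_e$ at stage $s+1$ is a $\vec g_s(\vec n_s[s],\epsilon[s])$-good system of length $s+1$ (columns $0,\dots,s$), not an $\vec n[s+1]$-good system; the appended $2$ in $\vec n_{s+1}[s+1]$ concerns column $s+1$ and enters only at the next stage. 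These are the points you would need to repair to turn the sketch into the paper's proof.
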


\begin{proof}
It suffices to show that (1) holds for~$s=0$, and that for each $s\ge 0$, (1)
implies (2) which implies (3), and moreover that (3) for~$s$ implies (1) for
$s+1$.

\noindent (1) holds for~$s=0$ because
$G[0]=\emptyset\in\Omega=A_0$.

\noindent\emph{(1) implies (2):}

Suppose $\vec U_s$ is a $\vec g_s(\vec n_s[s],\epsilon[s])$-good
system for $Q$ from~$G[s]$. As each~$\Phi_t$ only queries columns
$\le t$, and $t\in D_s=\text{dom}(\epsilon[s])$ implies $t<s$, we
see that each $\Phi_t$ for $t\in D_s$ only queries columns $\le
s-1$, so $\Phi^X(\epsilon[s])$ only queries columns $\le s-1$ for
any~$X$, and in particular only queries columns $\le s$. By
Lemma~\ref{g}, there is an $\vec n[s]=\vec n_s[s]$-good
system~$\vec V_s$ for
$$
Q_{(t,i)}=\{\vec\beta_s:\Phi^{\vec\beta_t}_t(e_t)\downarrow=i\}
$$
for some $t\in D_s$ and $i<h(e_t)$ from~$G[s]$.

Now $\vec n[s]=(\vec g_{s-1}(\vec n[s-1],\epsilon[s-1]),2)$, hence the
restriction $\vec V_{s-1}$ is $\vec g_{s-1}(\vec n[s-1],\epsilon[s-1])$-good.

For each $t^*\le s-1$, $g_{t^*}(\vec n[s-1],\epsilon[s-1])=2^a
n_{t^*}[s-1]\ge n_{t^*}[s-1]$ (for a certain $a\ge 0$). Applying this to
$t^*\le t$ (since $t\le s-1$), by Lemma~\ref{verygoodisgood}, the further
restriction $\vec V_t$ is $\vec n[t]$-good.

By (1) for~$s$, $G[s]\in A_s$. Recall that $A_s$ is the set of
pseudostrings $\tau=\vec\tau_{s-1}$ such that $\tau_{t^*}$
properly extends $G_{t^*}[s-1]$ for each $t^*\le s-1$, and for
each $t^*\in D_s$ (hence $t^*\le s-1$), there is no pair $\la\vec
T_{t^*},i^*\ra$ such that $i^*<h(e_{t^*})$ and $\vec T_{t^*}$ is a
finite $\vec n[t^*]$-good tree from~$\tau$ for
$Q_{(t^*,i^*)}=\{\vec\sigma_{t^*}:\Phi_{t^*}^{\vec\sigma_{t^*}}(e_{t^*})=i^*\}$.

Applying this with $t^*:=t$ and $i^*:=i$, we have that $G[s]=G[s]_{s-1}$ and
there is no pair $\la\vec T_t,i\ra$ such that $i<h(e_t)$ and $\vec T_t$ is a
finite $\vec n[t]$-good tree from~$G[s]$ for
$Q_{(t,i)}=\{\vec\sigma_t:\Phi_t^{\vec\sigma_t}(e_t)=i\}$.

But $\vec V_t$ is exactly such a tree $\vec T_t$, so we have a contradiction.

\noindent\emph{(2) implies (3):}

Since $\vec V_s$ is $\vec g_s(\vec n_s[s],\epsilon[s])$-good, by
Lemma \ref{induct-basic} there is an element $\vec\beta_s$
of~$\vec V_s$ from which there is no $\vec g_s(\vec
n_s[s],\epsilon[s])$-good tree for~$Q$, and hence not for any
$Q_{(t,i)}$ since $Q_{(t,i)}\subseteq Q$. Moreover $\vec\beta_s$
properly extends~$G[s]$, since $\vec V_s$ is not just the
singleton of~$G[s]$. So as $\vec V_s$ is $\vec g_s(\vec
n_s[s],\epsilon[s])$-good, as $\vec n[s+1]=\vec n_{s+1}[s+1]=(\vec
g_s(\vec n_s[s],\epsilon[s]),2)$ and as by Lemma~\ref{atleasttwo},
$n_t[s]\ge 2$ for each $t\le s$, it follows that every column
$\beta_t$ of $\vec\beta_s$ extends $G_t[s]$ properly.

Hence by definition of $A_{s+1}$, this element $\vec\beta$ belongs to
$A_{s+1}$.

\noindent\emph{(3) for $s$ implies (1) for $s+1$:}

If Case 1 obtains, let $\vec T_s$ be the tree found by~$\Phi_e$,
i.~e., $\vec T_s$ is $\vec g_s(\vec n_s[s],\epsilon[s])$-good
from~$G[s]$ (for $Q_{(s,i)}$ for some~$i$). If $\vec T_s$ is not
just the singleton of~$G[s]$, and Case 1 obtains, then apply (3)
for~$s$ to~$\vec T_s$.

If $\vec T_s$ is just the singleton of~$G[s]$ or if Case 2
obtains, then apply (3) for~$s$ to any~$\vec g_s(\vec
n_s[s],\epsilon[s])$-good non-singleton system of trees
from~$G[s]$.
\end{proof}

\begin{lem}\label{diverge}
For any~$s\ge 0$, if $s\in D_{s+1}$ then $\Phi_s^G(e_s)\uparrow$ or
$\Phi_s^G(e_s)\ge h(e_s)$.
\end{lem}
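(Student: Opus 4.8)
The statement to prove is Lemma~\ref{diverge}: for any $s\ge 0$, if $s\in D_{s+1}$ then $\Phi_s^G(e_s)\uparrow$ or $\Phi_s^G(e_s)\ge h(e_s)$. Let me think about how this parallels Lemma~\ref{diverge-basic} from the warm-up section.

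In the basic version (Lemma~\ref{diverge-basic}), the proof is: suppose toward a contradiction that $\Phi_s^G(e_s)\downarrow < h(e_s)$. Then this computation converges at some finite stage $t$, i.e., $\Phi_s^{G[t]}(e_s)\downarrow < h(e_s)$ (with the same value, by use considerations). The singleton tree $\{G[t]\}$ is $n$-good from $G[t]$ for all $n$ (trivially, since there's no proper superstring of $G[t]$ in the tree), in particular it's $n[t]$-good for $Q_{(s,i)}$ where $i = \Phi_s^{G[t]}(e_s)$. But $s \in D_{s+1} \subseteq D_t$ (wait, need to check the indexing) — actually $s \in D_{s+1}$, and $D$ is increasing, so $s \in D_t$ for $t \ge s+1$. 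Then $G[t] \in A_t$ by Lemma~\ref{accept-basic}(1), and by definition of $A_t$, there is no $n[t]$-good tree from $G[t]$ for $Q_{(s,i)}$. Contradiction.

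So for the systems version, the plan is: First, I would note that it suffices to consider the case $s \in D_{s+1}$; since $D$ is nondecreasing, $s \in D_t$ for every $t \ge s+1$. Suppose toward a contradiction that $\Phi_s^G(e_s)\downarrow = i < h(e_s)$. By Definition~\ref{phizero}, $\Phi_s$ queries its oracle only on columns $0,\ldots,s$, so $\Phi_s^G = \Phi_s^{G_0\oplus\cdots\oplus G_s}$, and this computation converges using only a finite part of $G$. Hence there is a stage $t$ (which we may take $\ge s+1$) such that $\Phi_s^{G[t]}(e_s)\downarrow = i$, where $G[t]$ is regarded as a pseudostring on columns $\le t-1$ (and extended by $\emptyset$ on higher columns as in Definition~\ref{firstdef}). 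Since the convergent computation uses only finitely much oracle, we have $G[t]\restriction u = G\restriction u$ for the use $u$, so the value is the same $i$.

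Next, I would observe that the singleton system $\vec T_t$ consisting only of the pseudostring $G[t] = \vec{G[t]}_t$ (with appropriate empty tail columns) is $\vec n[t]$-good from $G[t]$: this is because each $T_k(\vec\beta_{k-1})$ is a singleton tree, and a singleton tree $\{\gamma\}$ is $n$-good from $\gamma$ for every $n$ (condition (2) in Definition~\ref{tjo-basic} is vacuous, there being no $\tau$ with $\gamma \subseteq \tau \subset \rho$ for $\rho$ in the singleton). Thus $\vec T_t$ is a finite $\vec n[t]$-good system from $G[t]$ for $Q_{(s,i)} = \{\vec\beta_s : \Phi_s^{\vec\beta_s}(e_s) = i\}$ (or the appropriate column-restricted version $\{\vec\sigma_s : \Phi_s^{\vec\sigma_s}(e_s) = i\}$ as used in the definition of $A_t$), since the single element $G[t]$ indeed satisfies $\Phi_s^{G[t]}(e_s) = i < h(e_s)$.

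Finally, by Lemma~\ref{accept}(1) we have $G[t] \in A_t$, and since $s \in D_t$ and $i < h(e_s)$, the definition of $A_t$ directly forbids the existence of a pair $\langle \vec T_s, i\rangle$ with $i < h(e_s)$ and $\vec T_s$ a finite $\vec n[t]$-good tree from $G[t]$ for $Q_{(s,i)}$ — but we just exhibited exactly such a pair, a contradiction. I expect the only slightly delicate point is bookkeeping: making sure the column indices line up (that a singleton system over $\Omega^{t}$ extended trivially meets the $\vec n[t]$-goodness requirement as formulated, and that the pseudostring-vs-string distinction in the definition of $A_t$ and in $Q_{(s,i)}$ is handled correctly), together with the standard use-principle argument that a halting computation on oracle $G$ halts with the same value on a finite initial segment $G[t]$ of $G$; both are routine.
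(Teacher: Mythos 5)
Your proposal is correct and is essentially the paper's own proof: assume $\Phi_s^{G[t]}(e_s)\downarrow<h(e_s)$ at some finite stage $t$, note the singleton system $\{G[t]\}$ is $\vec n$-good for every parameter, and derive a contradiction with $G[t]\in A_t$ (Lemma~\ref{accept}(1)) since $s\in D_t$. The extra details you supply (use principle, $D$ nondecreasing, column bookkeeping) are exactly the routine points the paper leaves implicit.
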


\begin{proof}
Otherwise for some $t\in\omega$, $\Phi_s^{G[t]}(e_s)\downarrow<h(e_s)$. Since
the system whose only element is $G[t]$ is $\vec n_x$-good from~$G[t]$ for
all~$\vec n_x$ with $x\ge t$, hence in particular $\vec n[t]=\vec
n_t[t]$-good, this contradicts the fact that $G[t]\in A_t$.
\end{proof}

For each $x\in\omega$, let $\vec 1_x=(1_0,\ldots,1_x)$ be the sequence of
length $x+1$ consisting of all 1's, i.~e., where $1_0=\cdots=1_x=1$.

\begin{lem}\label{nogood}
For each $y\ge 0$, there is no $(\vec 1_{2y-1},2)$-good system from~$G[2y]$
for the property $\{\beta:\Phi^\beta_{2y}\downarrow\}$.
\end{lem}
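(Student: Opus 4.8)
The plan is to adapt the proof of Lemma~\ref{nogood-basic} to the setting of systems of trees, using the key diagonalization fact about \textsf{DNR} strings in the $2y$th column. First I would recall the structure of the argument there: one shows that for a \textsf{DNR} string $\alpha$ and any two distinct integers $k_1 \ne k_2$, at least one of $\alpha * k_1$, $\alpha * k_2$ remains \textsf{DNR}, simply because $\varphi_x(x)$ (where $x = |\alpha|$) can take at most one value. Here the relevant diagonalization happens on column $2y$, so the string being extended is $\beta_{2y}$, and the computation being diagonalized against is $\Phi^{\beta_0 \oplus \cdots \oplus \beta_{2y-1}}_x(x)$ for $x = |\beta_{2y}|$, in accordance with Definition~\ref{phizero}(2).

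The key step is to argue that there is no $(\vec 1_{2y-1}, 2)$-good system for $P := \{\beta : \Phi^\beta_{2y}\downarrow\}$ from $G[2y]$. By Definition~\ref{phizero}(2), $\Phi^\beta_{2y}\downarrow$ exactly when there is some $x$ with $\beta_{2y}(x) = \Phi^{\beta_0 \oplus \cdots \oplus \beta_{2y-1}}_x(x)$; equivalently, $\beta \in P$ iff $\beta$ is \emph{not} a relatively-\textsf{DNR}-type string on column $2y$ relative to the earlier columns. Now suppose toward a contradiction that $\vec T_{2y}$ is such a $(\vec 1_{2y-1}, 2)$-good system from $G[2y] = \vec{G[2y]}_{2y-1}$ (with the $2y$th column empty). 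By $\vec 1_{2y-1}$-goodness of the first $2y$ coordinates, fix any branch $\vec\beta_{2y-1} \in \vec T_{2y-1}$; this is forced to be unique-per-node but in any case pick one. Then $T_{2y}(\vec\beta_{2y-1})$ is a $2/1$-good tree from $G_{2y}[2y] = \emptyset$ for $\{\alpha : \Phi^{\vec\beta_{2y-1}}_{2y}\ \text{with column}\ 2y = \alpha\ \text{converges}\}$, meaning the root has at least two immediate successors $\langle k_1\rangle, \langle k_2\rangle$ (with $k_1 \ne k_2$) each having an extension in the tree, so each of $\vec\beta_{2y-1} * \langle k_1 \rangle$ and $\vec\beta_{2y-1} * \langle k_2 \rangle$ lies in $P$. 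But $x = 0 = |\emptyset|$ and $\Phi^{\vec\beta_{2y-1}}_0(0)$ has at most one value, so it cannot equal both $k_1$ and $k_2$; hence at least one of these two pseudostrings is not in $P$, a contradiction.

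The main obstacle I anticipate is bookkeeping with the indexing: making sure that $G[2y]$ is treated as a pseudostring with column $2y$ equal to $\emptyset$ (as the paper explicitly flags after Lemma~\ref{atleasttwo}), so that the relevant input $x$ is $|G_{2y}[2y]| = 0$, and confirming that the $2/1$-goodness in coordinate $2y$ really does deliver two distinct immediate successors of the root. A minor subtlety is that $\vec 1_{2y-1}$-goodness on the earlier columns only guarantees branching $1$ there, which is exactly what we need — we only require \emph{some} branch $\vec\beta_{2y-1}$ through the first $2y$ trees, and goodness with parameter $1$ suffices for that. Beyond these indexing points the argument is essentially the one-column diagonalization of Lemma~\ref{nogood-basic} transported verbatim, so no genuinely new idea is required.
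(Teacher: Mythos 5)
There is a genuine gap at the heart of your contradiction. You argue: since $\la k_1\ra$ and $\la k_2\ra$ each have an extension in $T_{2y}(\vec\beta_{2y-1})$, the pseudostrings $\vec\beta_{2y-1}*\la k_1\ra$ and $\vec\beta_{2y-1}*\la k_2\ra$ themselves lie in $P=\{\beta:\Phi^\beta_{2y}\downarrow\}$. That step is unjustified and in general false: a tree is good \emph{for}~$P$ only in the sense that its \emph{elements} (the maximal strings of the antichain) lie in~$P$, and $P$ is open (closed under extension), not closed under restriction. An element of the column-$2y$ tree extending $\la k_i\ra$ may witness convergence of $\Phi_{2y}$ at some input $x>0$ (say its second entry equals $\Phi_1^{\beta_0\oplus\cdots\oplus\beta_{2y-1}}(1)$), in which case $\la k_i\ra$ is still a \textsf{DNR}$^{\beta_0\oplus\cdots\oplus\beta_{2y-1}}$ string and $(\vec\beta_{2y-1},\la k_i\ra)\notin P$; so your single diagonalization at $x=0$ contradicts nothing. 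The basic lemma has to be transported in the other direction: the observation that at most one of any two immediate successors of a \textsf{DNR} string can become non-\textsf{DNR} is used \emph{upward}, starting from the (trivially \textsf{DNR}) empty string and, at every node of the finite tree, following an immediate successor inside the tree that remains \textsf{DNR} relative to $\beta_0\oplus\cdots\oplus\beta_{2y-1}$, until one reaches an element of $T_{2y}(\vec\beta_{2y-1})$ that is \textsf{DNR} --- contradicting $T_{2y}(\vec\beta_{2y-1})\subseteq P(\vec\beta_{2y-1},\cdot)$. This induction (implicit in the proof of Lemma~\ref{nogood-basic}) is exactly what your root-only argument is missing.

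A second, smaller gap: goodness with parameter $2$ on column $2y$ does not deliver two immediate successors of the root. The singleton tree $\{\emptyset\}$ is $2$-good from $\emptyset$ vacuously, and under your own reading ``$2/1$-good'' only \emph{one} immediate successor of the root is guaranteed by Definition~\ref{claim-basic}. The degenerate case in which the column-$2y$ component of an element of the system is empty must be handled separately, as the paper does: by Definition~\ref{phizero}(2), $\Phi_{2y}$ diverges when column $2y$ of the oracle is empty, so such an element cannot lie in~$P$. Your proposal needs both this case split and the upward induction to be complete; with them it becomes essentially the paper's proof.
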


\begin{proof}
Suppose there is such a system $\vec T_{2y}$.

First suppose $\vec T_{2y}$ has only one element. Then this element is
$G[2y]$, by the definition of a good system from~$G[2y]$. Hence
$\Phi^{G[2y]}_{2y}\downarrow$. But $G_{2y}[2y]$, column $2y$ of~$G$ as
constructed during stage~$2y$, is empty. So by Definition~\ref{phizero},
$\Phi^{G[2y]}_{2y}\uparrow$, so we have a contradiction.

Now suppose $\vec T_{2y}$ has more than one element. Given
$G_0\oplus\cdots\oplus G_{2y-1}$, there is at most one value of $G_{2y}(0)$
such that $\la G_{2y}(0)\ra$ is not a \textsf{DNR}$^{G_0\oplus\cdots \oplus
G_{2y-1}}$ string. Hence for any sequence of positive integers $\vec n_{2y}$,
if $\vec T_{2y}$ is $\vec n_{2y}$-good from~$G[s]$ then $n_{2y}\le 1$, so
$2\le 1$, which is a contradiction.
\end{proof}

\begin{lem}\label{dee}
For each $y\in\omega$, $2y\in D_{2y+1}$.
\end{lem}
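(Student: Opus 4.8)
The plan is to mimic the proof of Lemma~\ref{dee-basic}, using the systems-of-trees machinery in place of single trees. By the definition of $D_{s+1}$ given in Definition~\ref{con} (with $s:=2y$), to show $2y\in D_{2y+1}$ it suffices to show that at stage $2y+1$ there is no pair $\la\vec T_{2y},i\ra$ with $i<h(e_{2y})$ such that $\vec T_{2y}$ is a finite $\vec n[2y]$-good system from $G[2y]$ for $Q_{(2y,i)}=\{\vec\sigma_{2y}:\Phi_{2y}^{\vec\sigma_{2y}}(e_{2y})\downarrow=i\}$. Note here that $\Phi_{2y}$ queries only columns $0,\ldots,2y$, so its convergence on a pseudostring depends only on the first $2y+1$ columns; hence the value $e_{2y}$ plays no genuine role (by Definition~\ref{phizero}(2), $\Phi_{2y}$ computes the same thing regardless of its numeric input), and we have $Q_{(2y,i)}\subseteq\{\vec\beta_{2y}:\Phi_{2y}^{\vec\beta_{2y}}\downarrow\}$ for any $i$, in fact $Q_{(2y,i)}=\emptyset$ unless $i=0$.

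The key step is to invoke Lemma~\ref{nogood}: there is no $(\vec 1_{2y-1},2)$-good system from $G[2y]$ for $\{\beta:\Phi^\beta_{2y}\downarrow\}$. Now $\vec n[2y]=\vec n_{2y}[2y]=(n_0[2y],\ldots,n_{2y}[2y])$, and by Lemma~\ref{atleasttwo} we have $n_t[2y]\ge 2$ for each $t\le 2y$; in particular $n_t[2y]\ge 1$ for $t<2y$ and $n_{2y}[2y]\ge 2$, so $\vec n[2y]$ dominates $(\vec 1_{2y-1},2)$ componentwise. Suppose for contradiction that some finite $\vec n[2y]$-good system $\vec T_{2y}$ from $G[2y]$ for $Q_{(2y,i)}$ existed. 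Since $Q_{(2y,i)}\subseteq\{\beta:\Phi^\beta_{2y}\downarrow\}$, the same system $\vec T_{2y}$ would be $\vec n[2y]$-good from $G[2y]$ for $\{\beta:\Phi^\beta_{2y}\downarrow\}$; and since $\vec n[2y]$ componentwise dominates $(\vec 1_{2y-1},2)$, by Lemma~\ref{verygoodisgood} this system would also be $(\vec 1_{2y-1},2)$-good from $G[2y]$ for $\{\beta:\Phi^\beta_{2y}\downarrow\}$, contradicting Lemma~\ref{nogood}. Hence no such pair $\la\vec T_{2y},i\ra$ exists, and therefore $2y\in D_{2y+1}$ by the definition of $D_{2y+1}$.

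The only mild subtlety — and the step I would double-check carefully — is the bookkeeping that $\vec n[2y]$ really does componentwise dominate $(\vec 1_{2y-1},2)$: this requires knowing both that the last coordinate $n_{2y}[2y]$ is at least $2$ and that the earlier coordinates are at least $1$, which is exactly what Lemma~\ref{atleasttwo} supplies (indeed it gives $\ge 2$ throughout, more than enough). Everything else is a direct transcription of the argument of Lemma~\ref{dee-basic} with ``tree'' replaced by ``system of trees'', ``$2$-good'' replaced by ``$(\vec 1_{2y-1},2)$-good'', and Lemma~\ref{nogood-basic} replaced by Lemma~\ref{nogood}; no new combinatorial input is needed.
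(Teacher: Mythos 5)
Your proof is correct and follows essentially the same route as the paper: reduce to the nonexistence of a suitably good system from $G[2y]$ for $\{\beta:\Phi^\beta_{2y}\downarrow\}$, then use Lemma~\ref{atleasttwo} and Lemma~\ref{verygoodisgood} to pass to $(\vec 1_{2y-1},2)$-goodness and contradict Lemma~\ref{nogood}. One bookkeeping point to fix: membership of $2y$ in $D_{2y+1}$ is decided by Case 2 of the Construction, i.e., by divergence of $\Phi_e(e)$ for the Recursion Theorem fixed point of $f_{G[2y],2y,\vec g(\vec n[2y],\epsilon[2y])}$, so the criterion you must defeat concerns $\vec g_{2y}(\vec n[2y],\epsilon[2y])$-good systems (as in the paper's proof), not $\vec n[2y]$-good ones; the clause about $\vec n[t]$-good systems that you quote is the definition of $A_{s+1}$, not of $D_{s+1}$. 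This is harmless here, since $\vec g(\vec n[2y],\epsilon[2y])=2^a\vec n[2y]$ dominates $\vec n[2y]$ componentwise, so your statement is the stronger one and, in any case, both vectors dominate $(\vec 1_{2y-1},2)$, so the contradiction with Lemma~\ref{nogood} goes through verbatim; just make the substitution (or add the one-line monotonicity remark) explicit.
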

\begin{proof}
By definition of $D_{2y+1}$, it suffices to show that at stage $2y+1$ of the
Construction, there is no $\vec g(\vec n[2y],\epsilon[2y])$-good system from
$G[2y]$ for $\{\beta:\Phi^\beta_{2y}\downarrow=i\}$ for any $i<h(e)$. We will
show this in fact for $\{\beta:\Phi^\beta_{2y}\downarrow\}$.

Suppose there is such a system $\vec T_{2y}$. By Lemma~\ref{atleasttwo},
$g_x(\vec n[2y],\epsilon[2y])\ge 2$ for $0\le x\le 2y$. Hence by Lemma
\ref{verygoodisgood}, $\vec T_{2y}$ is $(\vec 1_{2y-1},2)$-good. By Lemma
\ref{nogood}, we have a contradiction.
\end{proof}

\begin{lem}\label{total} $G$ is a total function, i.~e.,
$G\in\omega^\omega$.
\end{lem}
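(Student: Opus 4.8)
The plan is to imitate the proof of Lemma~\ref{total-basic}, with Lemma~\ref{accept} playing the role of Lemma~\ref{accept-basic}. First I would invoke Lemma~\ref{accept}(1), which tells us that the Construction is well-defined at every stage and that $G[s]\in A_s$ for every $s\ge 0$; in particular $G[s+1]\in A_{s+1}$ for each $s\ge 0$.

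Next I would unpack the definition of $A_{s+1}$ from Definition~\ref{con}: every pseudostring $\tau=\vec\tau_s$ in $A_{s+1}$ has the property that $\tau_t$ properly extends $G_t[s]$ for each $t\le s$, where (using the convention recorded just before Lemma~\ref{accept}) $G[s]$ is read as $G_0[s]\oplus\cdots\oplus G_{s-1}[s]\oplus G_s[s]$ with $G_s[s]=\emptyset$. Applying this to $\tau=G[s+1]$, we obtain that for every $t\le s$ the string $G_t[s+1]$ is a proper superstring of $G_t[s]$, hence $|G_t[s]|<|G_t[s+1]|$. Fixing a column index $i$, it follows that $G_i[i]=\emptyset$ and that the lengths $|G_i[s]|$ are strictly increasing for $s>i$, so the union $G_i=\Union_{s>i}G_i[s]$ has domain all of~$\omega$ and is therefore a total function from $\omega$ to $\omega$. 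Since $G=\oplus_{i\in\omega}G_i$ and each column $G_i$ is total, $G$ is total, i.e.\ $G\in\omega^\omega$, as claimed.

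This lemma is essentially bookkeeping once Lemma~\ref{accept} is in hand; there is no real obstacle. The only point to watch is the column indexing — that for each $i$ the column $G_i$ first becomes nonempty at stage $i+1$ and then grows by at least one symbol at every later stage, uniformly in whether Case~1 or Case~2 is taken — and both facts are read off directly from the ``$\tau_t$ properly extends $G_t[s]$'' clause in the definition of $A_{s+1}$ together with $G[s+1]\in A_{s+1}$.
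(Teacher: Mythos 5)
Your proof is correct and takes essentially the same route as the paper, which also reduces everything to $G[s+1]\in A_{s+1}$ and the proper-extension clause in the definition of $A_{s+1}$; the only cosmetic difference is that you cite Lemma~\ref{accept}(1) (which directly asserts $G[s]\in A_s$) while the paper cites Lemma~\ref{accept}(3), and your version is if anything slightly more explicit about the per-column bookkeeping.
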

\begin{proof}
By Lemma~\ref{accept}(3), $G[s+1]\in A_{s+1}$ for each $s\ge 0$, and hence by
definition of $A_{s+1}$, $G_t[s+1]$ is a proper extension of~$G_t[s]$ for
each $t\le s$. From this the lemma immediately follows.
\end{proof}

\begin{lem}\label{DNR}
$G$ is relatively \textsf{DNR}.
\end{lem}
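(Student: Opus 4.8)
The plan is to run exactly the argument of Lemma~\ref{DNR-basic}, but with the even-indexed functionals of Definition~\ref{phizero} playing the role that $\Phi_0$ played in the warm-up. Recall from Definition~\ref{firstdef} that $G$ is relatively \textsf{DNR} precisely when for every $y$ there are no $x$ with $G_{2y}(x)=\Phi_x^{G_0\oplus\cdots\oplus G_{2y-1}}(x)$, and that by clause~(2) of Definition~\ref{phizero} this is equivalent to the statement that $\Phi_{2y}^G\uparrow$ for every~$y$. So I would fix $y\in\omega$ and aim to show $\Phi_{2y}^G\uparrow$.

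First I would invoke Lemma~\ref{dee} to obtain $2y\in D_{2y+1}$. By the Construction (Definition~\ref{con}), $s\in D_{s+1}$ can only happen when Case~2 is taken at stage~$s+1$; in particular Case~2 was taken at stage~$2y+1$, so $e_{2y}=\epsilon[2y+1](2y)$ is defined. Now I would apply Lemma~\ref{diverge} with $s=2y$ to conclude that either $\Phi_{2y}^G(e_{2y})\uparrow$ or $\Phi_{2y}^G(e_{2y})\ge h(e_{2y})$.

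Finally I would eliminate the second alternative just as in Lemma~\ref{DNR-basic}: by Definition~\ref{firstdef}, $h(e_{2y})>0$, whereas by clause~(2) of Definition~\ref{phizero} the only value $\Phi_{2y}^G$ can take (and, being independent of its input, we may write it without an argument) is $0$. Hence $\Phi_{2y}^G\uparrow$. As $y$ was arbitrary, clause~(2) of Definition~\ref{phizero} then yields that for every $y$ there is no $x$ with $G_{2y}(x)=\Phi_x^{G_0\oplus\cdots\oplus G_{2y-1}}(x)$, i.e.\ $G$ is relatively \textsf{DNR}. There is really no obstacle at this stage: all the difficulty has already been absorbed into Lemmas~\ref{diverge} and~\ref{dee} (and, through them, into the good-system apparatus of Lemmas~\ref{nogood}, \ref{g} and~\ref{accept}); the only point needing a second's attention is the bookkeeping remark that ``$2y\in D_{2y+1}$'' does force Case~2 at that stage, which is immediate from the Construction.
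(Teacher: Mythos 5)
Your proof is correct and is essentially the paper's own argument: the paper simply remarks that Lemma~\ref{DNR} follows from Definition~\ref{phizero}, Lemma~\ref{diverge} and Lemma~\ref{dee} exactly as Lemma~\ref{DNR-basic} followed from the corresponding basic versions, which is precisely the reduction you carry out. Your extra bookkeeping observation that $2y\in D_{2y+1}$ forces Case~2 (so that $e_{2y}$ is defined) is a harmless and accurate addition.
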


The proof of Lemma~\ref{DNR} from Definition~\ref{phizero}, Lemma
\ref{diverge} and Lemma~\ref{dee} is formally identical to the
proof of Lemma \ref{DNR-basic} from
Definition~\ref{phizero-basic}, Lemma \ref{diverge-basic} and
Lemma~\ref{dee-basic}.

\begin{lem} For each $y\in\omega$, $G_0\oplus\cdots\oplus G_y$
computes no $h$-\textsf{DNR} function.
\end{lem}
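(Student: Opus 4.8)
The plan is to follow the proof of the corresponding lemma of Section~\ref{proof-basic} (``$G$ computes no $h$-\textsf{DNR} function'') almost verbatim, adding only a bookkeeping step that reduces an arbitrary Turing functional with column‑bound~$y$ to one of the functionals $\Phi_z$ of Definition~\ref{phizero}. First I would note that every function recursive in $G_0\oplus\cdots\oplus G_y$ has the form $\Psi^{G_0\oplus\cdots\oplus G_y}$ for some Turing functional~$\Psi$. Let $\Psi'$ be the Turing functional with $(\Psi')^X=\Psi^{X_0\oplus\cdots\oplus X_y}$ for all $X:\omega\to\omega$; then $\Psi'$ queries its oracle only on columns $0,\ldots,y$. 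By Definition~\ref{phizero} together with the fact that each Turing functional has infinitely many indices, $\Psi'=\Phi_z$ for arbitrarily large~$z$; fix such a~$z$ with $z\ge y$. (Should $\Psi'$ happen to coincide with one of the special functionals $\Phi_{2y'}$, then $(\Psi')^G$ is either constantly~$0$ or partial, hence not $h$-\textsf{DNR}, and we are done.) By Definition~\ref{phizero}(1), $\Phi_z^G=\Phi_z^{G_0\oplus\cdots\oplus G_z}=\Psi^{G_0\oplus\cdots\oplus G_y}$, so it suffices to prove that $\Phi_z^G$ is not $h$-\textsf{DNR}.

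For that I would turn to stage $z+1$ of the Construction, where $c=z$ and $e$ is the fixed point with $\Phi_e=\Phi_{f(e)}$ for $f=f_{G[z],z,\vec g(\vec n[z],\epsilon[z])}$ — so this is exactly the stage at which the Construction diagonalizes against the convergence of $\Phi_z$. In Case~1, $\Phi_e(e)\downarrow=i$ for some $i<h(e)$ and the system $\vec T$ fixed by the Construction is $\vec g(\vec n[z],\epsilon[z])$-good from~$G[z]$ for $\{\vec\beta_z:\Phi_z^{\vec\beta_z}(e)=i\}$; since $G[z+1]\in\vec T$ and $\Phi_z$ queries only columns $\le z$ (on which $G$ extends $G[z+1]$), we get $\Phi_z^G(e)=i=\Phi_e(e)$, so $\Phi_z^G$ fails the \textsf{DNR} condition at~$e$. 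In Case~2, $z\in D_{z+1}$ and $e_z=e$, so Lemma~\ref{diverge} yields $\Phi_z^G(e)\uparrow$ or $\Phi_z^G(e)\ge h(e)$; either way $\Phi_z^G$ is not $h$-\textsf{DNR}. Since these two cases are exhaustive, $\Phi_z^G$, and hence $\Psi^{G_0\oplus\cdots\oplus G_y}$, is not $h$-\textsf{DNR}, as required.

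I expect no genuine obstacle here: all the combinatorial content has already been established in Lemmas~\ref{induct}, \ref{accept} and~\ref{diverge}, and the argument is formally identical to its Section~\ref{proof-basic} counterpart once the column bookkeeping is carried out. The one point demanding a little care is precisely that bookkeeping — confirming that cutting an arbitrary functional down to columns $\le y$ produces some $\Phi_z$ with $z\ge y$, that the resulting one‑place function $\Phi_z^G$ coincides with $\Psi^{G_0\oplus\cdots\oplus G_y}$, and that the index conventions of Definition~\ref{con} make stage $z+1$ the stage responsible for $\Phi_z$.
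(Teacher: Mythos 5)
Your argument is correct and follows the paper's proof closely: truncate the given functional to the relevant columns, locate it among the $\Phi_z$ of Definition~\ref{phizero}, and read off the conclusion from the Case~1/Case~2 split at stage $z+1$ via Lemma~\ref{diverge}. The paper avoids your parenthetical by choosing an \emph{odd} index $s\ge 2y+1$ outright, so that $\Phi_s$ is automatically not one of the special even-indexed functionals of Definition~\ref{phizero} -- slightly cleaner bookkeeping, but the same argument.
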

\begin{proof}
It suffices to show that given $y\in\omega$, and a Turing
functional~$\Psi$ which does not query its oracle beyond column
$2y+1$, $\Psi^{G_0\oplus\cdots\oplus G_{2y+1}}$ is not
$h$-\textsf{DNR}. In the Construction we have been considering the
Turing functionals~$\Phi_z, z\in\omega$ of
Definition~\ref{phizero}. Since each Turing functional has
infinitely many indices, it follows from Definition~\ref{phizero}
that there are infinitely many odd numbers $s\ge 2y+1$ such that
$$\Psi^{G_0\oplus\cdots\oplus
G_{2y+1}}=\Phi_s^{G_0\oplus\cdots\oplus
G_{2y+1}}=\Phi_s^{G_0\oplus\cdots\oplus G_s}=\Phi_s^G.$$ Fix such
an~$s$ and consider stage $s+1$ of the Construction. If Case 1
holds then $\Phi^G_s(e)=\Phi_e(e)$ and so $\Phi_s^G$ is not
$h$-\textsf{DNR}. If Case 2 holds then by Lemma~\ref{diverge},
$\Phi_s^G(e)\uparrow$ or $\Phi_s^G(e)\ge h(e)$. Hence $\Phi_s^G$
is not $h$-\textsf{DNR}.
\end{proof}
\bibliographystyle{asl}
\bibliography{dnr}
\end{document}